\title[KK uniqueness for semifinite factors]{A note on 
proper asymptotic uniqueness for  semifinite
factors}
\author{P. W. Ng}
\address{Department of Mathematics\\  University of Louisiana at Lafayette\\
217 Maxim Doucet Hall\\   1401 Johnston St.\\
Lafayette, Louisiana\\
70503\\  USA}
\email{png@louisiana.edu}
\author{Cangyuan Wang}
\address{School of Mathematical Sciences\\
	Ocean University of China\\
	238 Songling Road, Laoshan District\\
	Qingdao, Shandong\\
266100\\ 
P. R. China}
\email{cyw@ouc.edu.cn}
\newtheorem{thm}{Theorem}[section]   
\newtheorem{df}[thm]{Definition} 
\newtheorem{prop}[thm]{Proposition}  
\newtheorem{lem}[thm]{Lemma}
\newcommand{\A}{\mathcal{A}}
\newcommand{\B}{\mathcal{B}}
\newcommand{\C}{\mathcal{C}}
\newcommand{\D}{\mathcal{D}}
\newcommand{\E}{\mathcal{E}}
\newcommand{\F}{\mathcal{F}}
\newcommand{\K}{\mathcal{K}}
\newcommand{\Mul}{\mathcal{M}}
\newcommand{\M}{\mathbb{M}}  
\newcommand{\T}{\mathcal{T}}
\newcommand{\Ss}{\mathcal{T}}
\newcommand{\hh}{\mathcal{H}}
\numberwithin{equation}{section}
\begin{document}

\maketitle

\begin{abstract}
Let $\A$ be a separable nuclear C*-algebra, and let $\Mul$ be a 
semifinite von Neumann factor with separable predual.
Let $\phi, \psi: \A \rightarrow \Mul$ be essential trivial extensions 
with $\phi(a) - \psi(a) \in \K_{\Mul}$ for all $a \in \A$ such 
that either 
both $\phi$ and $\psi$ (and hence $\A$) are unital or both $\phi$ and $\psi$
have large complement.

Then 
$\phi$ and $\psi$ are properly asymptotically
unitarily equivalent if and only if $[\phi, \psi]_{CS} = 0$ in
$KK(\A, \C(S \K_{\Mul}))$. 
\end{abstract}

\section{Introduction}

In their groundbreaking paper \cite{BDF1}, Brown, Douglas and Fillmore
classified all essentially normal operators using algebraic topological
invariants. In the course of proving functorial properties
of their functor, they introduced the notion of the
\emph{essential codimension} $[P:Q]$ for a pair of projections
$P, Q \in \mathbb{B}(l_2)$          
with $P - Q \in \K$ (where $\K$ is the algebra of compact operators
over a separable infinite dimensional Hilbert space $l_2$), and they
 showed that
\begin{equation} \label{equ:BDFECTh}
[P:Q] = 0 \makebox{  if and only if } \exists   
\makebox{ a unitary  }U \in \mathbb{C}1 +  
\K \makebox{ for which } P = U Q U^*.
\end{equation}   
(See, for example, \cite{LoreauxNg} and 
\cite{LoreauxNgSutradhar1}.) 
These notions have turned out to be quite fruitful.
For examples, the notion of essential codimension
has led to an explanation for the mysterious integers appearing
in Kadison's Pythagorean theorem and other Schur--Horn type 
results (e.g., \cite{LoreauxArveson}, \cite{KadisonPyth1}, 
\cite{KadisonPyth2}, \cite{KaftalLoreaux}), and has had applications
to the study of spectral flow and index theorems (e.g., 
see \cite{PhillipsEtAl}). 
Moreover, it turns out that essential codimension is actually a special
case of a $KK^0$ element, and the BDF essential codimension result 
(\ref{equ:BDFECTh}) has nontrivial generalizations that are important
for the stable uniqueness of theorems of classification theory. Among
other things, we have the following result: 
\begin{thm} \label{thm:MultiplierBDFECTh} (See \cite{LeeProperAUE},
\cite{LoreauxNg}, \cite{LoreauxNgSutradhar2}, \cite{DadarlatEilers},
\cite{LinStableAUE}.)  
Let $\A$ be a separable nuclear C*-algebra, and let $\B$ be a separable
stable C*-algebra.
Suppose that the Paschke dual algebra $(\A^+)^d_{\B}$ is $K_1$-injective.

Let $\phi, \psi : \A \rightarrow \Mul(\B)$ be *-monomorphisms with
$\phi(a) - \psi(a) \in \K_{\Mul}$ for all $a \in \A$ such that
either both are unitally absorbing trivial extensions or both are nonunitally
absorbing trivial extensions.

Then 
$\phi$ and
$\psi$ are properly asymptotically unitarily equivalent 
if and only if 
$[\phi, \psi] = 0$ in $KK^0(\A, \B)$   
\end{thm}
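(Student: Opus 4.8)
The plan is to treat the soft implication directly and to reduce the other one to Theorem~\ref{thm:MultiplierBDFECTh} by passing to a separable stable subalgebra of $\K_{\Mul}$. (If $\Mul$ is finite then $\K_{\Mul}=\Mul$, essentiality forces $\A=0$, and the statement is vacuous; so assume $\Mul$ is properly infinite.)

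For ``$\Rightarrow$'': if $\phi$ and $\psi$ are properly asymptotically unitarily equivalent, fix a norm-continuous path $(u_t)_{t\in[0,\infty)}$ of unitaries in $\mathbb{C}1_{\Mul}+\K_{\Mul}$ with $u_0=1$, with $u_t\phi(a)u_t^*-\psi(a)\in\K_{\Mul}$ for all $t,a$, and with $\|u_t\phi(a)u_t^*-\psi(a)\|\to 0$ for each $a$. Substituting this path into the definition of $[\phi,\psi]_{CS}$ produces a homotopy, inside the corona algebra $\C(S\K_{\Mul})$ of the suspension of $\K_{\Mul}$, from a cocycle representing $[\phi,\psi]_{CS}$ to a degenerate one; the normalization $u_0=1$ is exactly what makes the endpoints agree. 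So $[\phi,\psi]_{CS}=0$, using only homotopy invariance and additivity of the class.

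For ``$\Leftarrow$'', assume $[\phi,\psi]_{CS}=0$. As $\A$ is separable, $\phi(\A)$, $\psi(\A)$ and $\{\phi(a)-\psi(a):a\in\A\}$ are separable, the last contained in $\K_{\Mul}$; and as $\Mul$ has separable predual, $\K_{\Mul}$ is the directed union of its separable C*-subalgebras and is stable. A L\"owenheim--Skolem-type exhaustion yields a separable stable C*-subalgebra $\B\subseteq\K_{\Mul}$ with: (i) $\phi(a)-\psi(a)\in\B$ for all $a$; (ii) $\phi(\A)\B+\B\phi(\A)+\psi(\A)\B+\B\psi(\A)\subseteq\B$, so $\phi,\psi$ corestrict to $*$-homomorphisms $\A\to\Mul(\B)$ forming a Cuntz pair with defect in $\B$; (iii) $1_{\Mul(\B)}-\phi(1_\A)$ is full and properly infinite in $\Mul(\B)$ --- in the large-complement case because it dominates the large projection $1_{\Mul}-\phi(1_\A)$, and in the unital case by the stability of $\K_{\Mul}$; and (iv) $(\A^{+})^{d}_{\B}$ is $K_1$-injective, arranged (for $\A$ nuclear) by a suitable choice of $\B$. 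By construction $[\phi,\psi]_{CS}$ is the image under $\B\hookrightarrow\K_{\Mul}$ of a class $[\phi,\psi]_{\B}\in KK(\A,\C(S\B))\cong KK^{0}(\A,\B)$; since $KK(\A,-)$ is continuous along the directed union of the separable stable subalgebras of $\K_{\Mul}$ containing $\B$ (here using $\A$ separable), the vanishing of $[\phi,\psi]_{CS}$ forces $[\phi,\psi]_{\B}$ to vanish over some separable stable intermediate algebra; re-running the exhaustion to absorb it, we may assume $[\phi,\psi]_{\B}=0$ in $KK^{0}(\A,\B)$.

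Now $\phi,\psi\colon\A\to\Mul(\B)$ are $*$-monomorphisms and trivial extensions, they are essential ($\B\subseteq\K_{\Mul}$), and --- by (iii), the nuclearity of $\A$, and the corona factorization property of $\K_{\Mul}$ (hence of $\B$) --- they are purely large, hence non-unitally absorbing by Elliott--Kucerovsky. Theorem~\ref{thm:MultiplierBDFECTh} therefore produces a norm-continuous path $u_t=1_{\Mul(\B)}+b_t$ of unitaries in $\mathbb{C}1_{\Mul(\B)}+\B$ with $u_0=1$ and $\|u_t\phi(a)u_t^*-\psi(a)\|\to 0$. Since $b_t\in\B\subseteq\K_{\Mul}$ and the unitarity relation $b_t+b_t^*+b_tb_t^*=0$ does not involve the ambient unit, $1_{\Mul}+b_t$ is a norm-continuous path of unitaries in $\mathbb{C}1_{\Mul}+\K_{\Mul}$, starting at $1_{\Mul}$, with $(1_{\Mul}+b_t)\phi(a)(1_{\Mul}+b_t)^*=u_t\phi(a)u_t^*\to\psi(a)$ and difference from $\psi(a)$ in $\K_{\Mul}$; this is the desired proper asymptotic unitary equivalence over $\Mul$. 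I expect the main obstacle to be the third paragraph and the absorption step: performing the separabilization so that \emph{all} of (i)--(iv) and the descent of the vanishing of the $KK$-class hold at once, and correctly translating ``essential, with large or trivial complement in $\Mul$'' into pure largeness in $\Mul(\B)$ so that the Elliott--Kucerovsky absorption theorem --- and hence Theorem~\ref{thm:MultiplierBDFECTh} --- can be applied.
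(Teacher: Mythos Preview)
First, a mismatch: Theorem~\ref{thm:MultiplierBDFECTh} is a result quoted from the literature and is not proved in this paper. Your proposal is not a proof of it; you are attempting the paper's main result, Theorem~\ref{thm:TotalMainResult}, \emph{using} Theorem~\ref{thm:MultiplierBDFECTh} as a black box. I will compare your argument to the paper's proof of Theorem~\ref{thm:TotalMainResult}.

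The decisive gap is in your ``$\Leftarrow$'' direction, exactly where you anticipated trouble. You write that ``$KK(\A,-)$ is continuous along the directed union of the separable stable subalgebras of $\K_{\Mul}$ containing $\B$'' and use this to descend the vanishing of $[\phi,\psi]_{CS}$ to some separable $\B$. But $[\phi,\psi]_{CS}$ lives in $KK(\A,\C(S\K_\Mul))$, not in $KK(\A,\K_\Mul)$; there is no reason the functor $\B\mapsto\C(S\B)$ should commute with the directed union, nor that the maps $KK(\A,\C(S\B))\to KK(\A,\C(S\K_\Mul))$ assemble into anything with a limit property you can invoke. The paper does not attempt any such descent. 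Instead it works directly in $\C(S\K_\Mul)$: it shows (Lemma~\ref{lem:SOinftyStable}) that $\pi(\{(1-t)\phi+t\psi\})$ and a reference trivial extension $\pi\circ\theta$ are both unital, full, strongly $O_\infty$-stable maps into $\C(S\K_\Mul)$, and applies Gabe's classification \cite{GabeMemoirs} Theorem~B to get asymptotic unitary equivalence of these two maps \emph{inside} $\C(S\K_\Mul)$. Only then is a reduction (Lemma~\ref{lem:ExtReduction}) used to pull the witnessing unitaries down to $\C(S\B)$ for some separable simple stable $\B$; the vanishing of $[\phi,\psi]$ in $KK(\A,\B)$ follows via Proposition~\ref{prop:BlackadarProp}, and Lemma~\ref{lem:LinDE} (rather than Theorem~\ref{thm:MultiplierBDFECTh} directly) finishes. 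Your items (iii)--(iv) are also left as bare assertions: the paper secures absorption over $\B$ through the concrete criterion of Lemma~\ref{lem:EKAbsorb}, built into the reduction lemmas as condition~(d), and it never arranges $K_1$-injectivity of $(\A^+)^d_\B$ --- instead it proves $K_1$-injectivity of the Paschke dual in $\Mul/\K_\Mul$ (Theorem~\ref{thm:PaschkeDualWellDefined}), performs the $K_1$ computation there, and only afterward descends (see the proof of Lemma~\ref{lem:LinDE}). For ``$\Rightarrow$'' your sketch is closer in spirit but still hand-waves the ``homotopy\ldots to a degenerate one''; the paper (Theorem~\ref{thm:MainResultEasyDirection}) again first reduces to a separable $\B$, deduces $[\phi,\psi]=0$ in $KK(\A,\B)$ via \cite{DadarlatEilers}, invokes Lemma~\ref{lem:LinDE} to produce a path with $u_0=1$, and then builds an explicit unitary $w\in\Mul(S\K_\Mul)$ conjugating the extension to the constant one $\pi\circ\psi$.
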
    

In this note, we seek to 
extend Theorem \ref{thm:MultiplierBDFECTh} to 
the context of a type $II_{\infty}$ von Neumann factor $\Mul$ with separable
predual; i.e., we replace $\Mul(\B)$ with $\Mul$ (see Theorem 
\ref{thm:TotalMainResult}).  Note that if $\K_{\Mul}$
is the Breuer ideal of $\Mul$, then $\Mul$ is actually the multiplier algebra
$\Mul = \Mul(\K_{\Mul})$ 
(see Lemma \ref{lem:MultiplierAlgebraOfSubalgebraOfVNA}).  However, unlike $\B$, $\K_{\Mul}$ is not
even $\sigma$-unital, 
and thus, $KK^0(\A, \K_{\Mul})$ does not
have many  nice properties. 
Thus, we replace $KK^0(\A, \K_{\Mul})$ with $KK^0(\A, \C(S \K_{\Mul}))$,
noting that the corona algebra $\C(S \K_{\Mul})$ is unital.  
Moreover, also because of the non-$\sigma$-unitality of $\K_{\Mul}$, 
a number of results for multiplier algebras need to be established
in this new setting.  On the other hand, $\Mul/\K_{\Mul}$ is simple purely
infinite (which is not always true for $\Mul(\B)/\B$), and as a consequence,
a number of simplifications are also present -- including 
a Voiculescu type
Weyl--von Neumann theorem (see Theorem \ref{thm:VNAAbsorption}) which
results in a cleaner statement, dropping some of the assumptions present
in Theorem \ref{thm:MultiplierBDFECTh}.

We now discuss the contents of the paper. 
In Section \ref{sect:Reduction}, we discuss some preliminary results about
extension theory, as well as 
type $II_{\infty}$ factors.   Some of the main results of this section
concern ways to reduce from $\Mul$ to the case of a multiplier algebra
$\Mul(\B)$ of a separable stable C*-algebra $\B$.  Some of these techniques
are taken from the first author's joint work with others in 
\cite{GiordanoKaftalNg}.  Since this manuscript is not yet
available, we provide
proofs for the convenience of the reader.  
The ``concrete" reduction arguments at the end of Section \ref{sect:Reduction}
are straightforward applications of the abstract reduction arguments presented
earlier in the section.  These arguments are not difficult conceptually, 
but they are
technical and laborious.  
In Section \ref{sect:AWvN}, we 
present a $II_{\infty}$ factor version of the Voiculescu--Weyl--von Neumann
theorem (see Theorem \ref{thm:VNAAbsorption}).  Versions of this are
already available in the literature (see the interesting papers
\cite{LiShenShi}, \cite{HadwinMaShen}; \cite{GiordanoNg}).  We also discuss $K_1$-injectivity of the Paschke
dual algebra in the $II_{\infty}$ setting.      
In Section \ref{sect:MainResult}, we prove our main result, which is
Theorem \ref{thm:TotalMainResult}.
Finally, in the appendix, we provide a short KK computation which is required
for the main argument of this paper.

\iffalse
%We now end this section by discussing prerequisites for reading this paper.
%The  prerequisite for reading this paper is basic knowledge of C*-algebra
%theory (including multiplier algebras, strict topology,
% and Busby invariants) at the level
%of \cite{DavidsonBook}, \cite{WeggeOlsen} and the parts of \cite{LinBook}
%that do not involve KK theory.  We will give all definitions and provide
%precise references, when it comes to absorbing extensions (e.g., 
%see \ref{para:Extensions1} and \ref{para:Extensions2});  however, it     
%may be useful for the reader, when reading Sections \ref{sect:AWvN}
%and \ref{sect:MainResult}, to have copies of \cite{BlackadarKTh} and
%\cite{ElliottKucerovsky} at hand 
%to refer to.   Finally, KK theory is only used in
%Section \ref{sect:MainResult}.  However, the main picture of KK theory 
%that we use (the generalized homomorphism picture)  is defined 
%(see 
%\ref{para:GeneralizedHomomorphismKKP1}). We also use other pictures of
%KK theory,
%but we give precise references for these and other results used.  For a 
%reader with the prerequisites mentioned at the beginning of this paragraph, if
%the reader is willing to read the definitions, go to some standard
%references (mainly
%\cite{BlackadarKTh} and \cite{JensenThomsenBook}) 
%and accept some standard
%statements in those references, then he or she should have no difficulty  
%reading this paper.\\   
\fi

\section{Preliminaries and reduction arguments}
\label{sect:Reduction}

We begin by briefly introducing some notation.
We refer the reader to \cite{DavidsonBook}, \cite{WeggeOlsen}, 
\cite{BlackadarKTh}. \cite{JensenThomsenBook} and \cite{LinBook}
for basic results in C*-algebras, K theory and KK theory.
 
We let $l_2$ be our notation for a separable infinite dimensional 
Hilbert space, and let $\mathbb{B}(l_2)$ be the 
C*-algebra of all bounded linear operators on $l_2$.  We let 
$\K$ denote the C*-algebra of all compact operators on $l_2$ (so 
$\K \subseteq \mathbb{B}(l_2)$ is a C*-subalgebra).

For a C*-algebra $\B$, 
$\Mul(\B)$ denotes the multiplier algebra of $\B$, and $\C(\B) =_{df} 
\Mul(\B)/\B$ denotes the corona algebra of $\B$.
(E.g., $\Mul(\K) = \mathbb{B}(l_2)$ and $\C(\K) = \mathbb{B}(l_2)/\K$.) 
We let $\pi_{\B} : \Mul(\B) \rightarrow \C(\B)$  denote
 the usual quotient map. 
Often, when the context is clear, we drop the $\B$ and write $\pi$ instead
of $\pi_{\B}$ for this quotient map. See next paragraph for other quotient
map 
notation.

For a semifinite von Neumann factor $\Mul$ (i.e., $\Mul$ is either
$II_{\infty}$ or $\mathbb{B}(l_2)$) with separable predual,
we let $\K_{\Mul}$ denote the Breuer ideal of $\Mul$.  (So 
if $\Mul = \mathbb{B}(l_2)$, then $\K_{\Mul} = \K$.)  
In Lemma \ref{lem:MultiplierAlgebraOfSubalgebraOfVNA}, we will see 
that $\Mul = \Mul(\K_{\Mul})$ (even when $\Mul$ is type $II_{\infty}$).  
For clarity, we sometimes let $\pi_M : \Mul \rightarrow \Mul/\K_{\Mul}$
denote the quotient map.

Finally, for 
any C*-algebra $\D$,  for any $x, y \in \D$, and for any $\epsilon > 0$, we use the notation
$x \approx_{\epsilon} y$ to mean $\| x - y \| < \epsilon$.

\subsubsection{} \label{para:Extensions1} 
We next recall some preliminaries about extensions
(which will primarily be used starting in Lemma \ref{lem:PAsympUEReduction}).
More detailed information for extension theory can be found in 
\cite{WeggeOlsen} (see also \cite{BlackadarKTh} and \cite{LinBook}).
Recall that to an extension of C*-algebras
\begin{equation} \label{equ:Apr120251AM}
  0 \to \B \to \mathcal{E} \to \A \to 0,
\end{equation}
we can associate the \emph{Busby invariant} of the extension, 
which is a *-homomorphism $\phi : \A \to \C(\B)$.
Conversely, given such a *-homomorphism $\phi : \A \to \C(\B)$, one can obtain an extension of the form (\ref{equ:Apr120251AM}) whose Busby invariant is 
$\phi$. 
In fact,
the extension corresponding to a given Busby invariant is unique up to 
strong isomorphism (in the terminology of Blackadar;  
see \cite{BlackadarKTh} section 15.4; see also \cite{WeggeOlsen} Corollary 
3.2.12).
Our results are all invariant under strong isomorphism, and therefore, whenever
we have a *-homomorphism $\phi : \A \to \C(\B)$, we will simply refer to
it  as an extension.
When $\phi$ is injective, then $\phi$ is called an 
\emph{essential extension}.  
This corresponds to $\B$ being an essential ideal of $\E$.

A \emph{trivial extension} $\phi : \A \rightarrow \C(\B)$ is an extension
 for which the short exact sequence (say (\ref{equ:Apr120251AM}))
 is split exact, or equivalently, the Busby invariant factors through the 
quotient map $\pi : \Mul(\B) \to \C(\B)$ via a *-homomorphism $\phi_0 : \A \to \Mul(\B)$ so that $\phi = \pi \circ \phi_0$.
If $\phi : \A \to \C(\B)$ is a trivial (essential) extension, then by a slight 
abuse of terminology, we also refer to the lifting *-homomorphism
$\phi_0 : \A \rightarrow \Mul(\B)$ as a trivial
(resp. essential) extension.
When $\A$ is a unital C*-algebra and $\phi : \A \rightarrow \C(\B)$ 
is a unital *-homomorphism, 
we say that $\phi$ is a \emph{unital extension}.
If $\phi : \A \rightarrow \C(\B)$ is unital and
 trivial (and essential) and some lifting *-homomorphism
 $\phi_0$ is also unital, then
$\phi$ is said to be a \emph{strongly unital trivial (resp. essential)
extension}. In this case,   
by abuse of terminology again, 
we often refer to the *-homomorphism
$\phi_0 : \A \rightarrow \Mul(\B)$ as a strongly
unital trivial (resp. essential) extension.  
     
Next, let $\phi, \psi: \A \rightarrow \Mul(\B)$ be *-homomorphisms.
Then $\phi$ and $\psi$ are \emph{asymptotically unitarily equivalent
modulo $\B$ ($\phi \sim_{asymp, \B} \psi$)} if there exists a norm-continuous
path $\{ u_t \}_{t \in [1, \infty}$ of unitaries in $\Mul(\B)$ such that
for all $a \in \A$, (i) $u_t \phi(a) u_t^* - \psi(a) \in \B$ for all $t$,
and (ii.) $\| u_t \phi(a) u_t^* - \psi(a) \| \rightarrow 0$ as  
$t \rightarrow \infty$.

The following is the central equivalence relation studied in 
this paper:

\begin{df} \label{df:ProperAsympUE}
Let $\A$ and $\B$ be C*-algebras, and let $\phi, \psi : \A \rightarrow \Mul(\B)$ be *-homomorphisms
for which $\phi(a) - \psi(a) \in \B$ for all $a \in \A$.

$\phi$ and $\psi$ are \emph{properly asymptotically unitarily equivalent} ($\phi \sim_{pasymp} \psi$) 
if there exists a norm-continuous path $\{ u_t \}_{t \in [1, \infty)}$, of unitaries in $\mathbb{C}1
+ \B$,
such that for all $a \in \A$,  
\begin{enumerate}
\item $u_t \phi(a) u_t^* - \psi(a) \in \B$ for all $t \in [1, \infty)$, and  
\item $\| u_t \phi(a) u_t^* - \psi(a) \| \rightarrow 0$ as $t \rightarrow \infty$.  
\end{enumerate}
(Note that $\B$ can be non$\sigma$-unital. E.g., $\B = \K_{\Mul}$,
the Breuer ideal of a $II_{\infty}$ factor $\Mul$.)
\end{df}

Finally, for C*-algebras $\A$ and $\C$ with $\C$ unital,
a map $\rho : \A \rightarrow \C$ 
\emph{has large complement} if there exists a projection
$p \in \C$ such that $p \sim 1_{\C}$ and $p \perp \rho(\A)$.\\

The proof of the first result is in \cite{GiordanoKaftalNg}.    
But since this manuscript has not yet appeared, we give the brief proof.  

\begin{lem} \label{lem:MultiplierAlgebraOfSubalgebraOfVNA}
Let $\Mul$ be a von Neumann algebra, and let $\B \subseteq \Mul$ be a C*-subalgebra.

Then $\Mul(\B) 
\cong \{ x \in \overline{\B}^{w*} : x \B, \B x \subseteq  \B \}.$\\ 
\noindent As a consequence, we may realize $\Mul(\B)$ as a C*-subalgebra
of $\Mul$ in the obvious way (i.e., $\Mul(\B) \subseteq \Mul$, which 
extends the inclusion $\B \subseteq \Mul$).  
 
Moreover, if $\Mul$ is a semifinite von Neumann factor with separable predual, and if 
$\K_{\Mul} \subseteq \Mul$ is the Breuer ideal of $\Mul$, then
$\Mul(\K_{\Mul}) = \Mul.$
\end{lem}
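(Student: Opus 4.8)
The plan is to prove the abstract formula $\Mul(\B)\cong\{x\in\overline{\B}^{w*}:x\B,\B x\subseteq\B\}$ first, working inside a concrete representation, and then to read off the semifinite factor statement as a special case by plugging in two elementary facts about the Breuer ideal.

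First I would fix a faithful normal representation $\Mul\subseteq\bb(H)$ (so that the weak-$*$ topology of $\Mul$, which is intrinsic by Sakai's theorem, is the restriction of the $\sigma$-weak topology of $\bb(H)$), set $N =_{df} \overline{\B}^{w*}\subseteq\Mul$, and let $p\in\bb(H)$ be the projection onto $\overline{\B H}$. A routine argument shows that $\B$ annihilates $(\overline{\B H})^{\perp}$, hence $pb=bp=b$ for all $b\in\B$; consequently an approximate unit of $\B$ converges strongly to $p$, so $p\in N$ and $p$ is the unit of $N$, while $Np^{\perp}=p^{\perp}N=0$, so $N\subseteq p\bb(H)p=\bb(pH)$ and $\B$ acts \emph{nondegenerately} on $pH$. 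The weak-$*$ closure of $\B$ computed inside $\bb(pH)$ is again $N$, which by von Neumann's bicommutant theorem equals $\B''$ (commutant taken in $\bb(pH)$).

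Next, because $\B$ is nondegenerate on $pH$, the standard description of the multiplier algebra (see \cite{WeggeOlsen}) identifies $\Mul(\B)$ with the idealizer $I =_{df} \{T\in\bb(pH):T\B\subseteq\B,\ \B T\subseteq\B\}$. The crux of the proof — the one step I expect to require genuine, if brief, work — is to show $I\subseteq N$. Given $T\in I$ and a unitary $u\in\B'$, for every $b\in\B$ one computes $uTu^{*}b=uT(u^{*}b)=uT(bu^{*})=u(Tb)u^{*}=(Tb)uu^{*}=Tb$, using $u^{*}b=bu^{*}$ and $Tb\in\B\subseteq\B''$. Thus $(uTu^{*}-T)\B=0$, so $uTu^{*}-T$ vanishes on $\overline{\B\,pH}=pH$, i.e. $uTu^{*}=T$; since unitaries span $\B'$, we get $T\in\B''=N$. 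Hence $\Mul(\B)=\{T\in N:T\B\subseteq\B,\ \B T\subseteq\B\}=\{x\in\overline{\B}^{w*}:x\B,\B x\subseteq\B\}$, and since $N\subseteq\Mul$ this exhibits $\Mul(\B)$ as a C*-subalgebra of $\Mul$ extending the inclusion $\B\subseteq\Mul$.

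For the last assertion I would feed two facts about the Breuer ideal into the formula just proved. First, $\K_{\Mul}$ is a norm-closed two-sided ideal of $\Mul$, immediate from its definition. Second, $\K_{\Mul}$ is $\sigma$-weakly dense in $\Mul$: using separability of the predual, decompose $1_{\Mul}=\sum_{n}q_{n}$ as a strongly convergent sum of pairwise orthogonal finite projections (e.g. rank-one projections when $\Mul=\bb(l_2)$, and via $\Mul\cong\Mul_{0}\,\overline{\otimes}\,\bb(l_2)$ with $\Mul_{0}$ a $II_{1}$ factor in the type $II_{\infty}$ case), so that the finite projections $e_{N}=_{df}\sum_{n\le N}q_{n}$ increase strongly to $1$; then $e_{N}xe_{N}\to x$ $\sigma$-strongly, hence $\sigma$-weakly, for every $x\in\Mul$, with $e_{N}xe_{N}\in\K_{\Mul}$, so $\overline{\K_{\Mul}}^{w*}=\Mul$. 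Combining these with the formula, $\Mul(\K_{\Mul})=\{x\in\overline{\K_{\Mul}}^{w*}:x\K_{\Mul},\K_{\Mul}x\subseteq\K_{\Mul}\}=\{x\in\Mul:x\K_{\Mul},\K_{\Mul}x\subseteq\K_{\Mul}\}=\Mul$. The only places I anticipate needing care are the bicommutant identification of $N$ after compressing to $pH$ and the commutation computation above; everything else is bookkeeping.
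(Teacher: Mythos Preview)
Your proof is correct and follows the same overall scaffold as the paper's: pass to a Hilbert space on which $\B$ acts nondegenerately, identify $\Mul(\B)$ with the idealizer of $\B$ there, and then show that this idealizer lies inside $\overline{\B}^{w*}$. The one substantive difference is in how you establish that last containment. The paper argues topologically: every element of $\Mul(\B)$ is a strict limit of elements of $\B$, and in a nondegenerate representation strict convergence implies SOT convergence, so multipliers lie in the SOT closure of $\B$, which (via the normal representation of $P\Mul P$) coincides with $\overline{\B}^{w*}$. You instead argue algebraically via the bicommutant theorem, using your commutation computation to show that any idealizer element commutes with every unitary in $\B'$ and hence lies in $\B''=\overline{\B}^{w*}$. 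Both arguments are short and valid; yours avoids any mention of the strict topology and keeps everything inside the original representation of $\Mul$, while the paper's route is closer to the standard multiplier-algebra viewpoint. For the Breuer-ideal statement the paper simply invokes weak-$*$ density of $\K_{\Mul}$ in $\Mul$ without elaboration, whereas you supply the usual approximate-unit argument; there is no difference in substance.
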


\begin{proof}
Let $P \in \Mul$ be the range projection of $\B$. I.e., if $\{ e_{\alpha} \}$
is an approximate unit for $\B$, then $P =_{df} \sup_{\alpha} e_{\alpha}
\in \Mul$.

We can find a Hilbert space $\hh$ where we can
realize $P\Mul P$ as an SOT-closed unital *-subalgebra
$P\Mul P \subseteq \mathbb{B}(\hh)$. (So $P = 1_{\mathbb{B}(\hh)}$.)     
Since $P \in \Mul$ is the range projection of $\B$,
$\B$ acts nondegenerately (as well as faithfully) on $\hh$.  
Hence, $\Mul(\B) = \{ x \in \mathbb{B}(\hh) : x \B, \B x 
\subseteq \B \} \subseteq \mathbb{B}(\hh)$ (e.g., see 
\cite{WeggeOlsen} Definition 2.2.2).  
But for all $x \in \Mul(\B)$, $x$ is in the strict closure
of $\B$.  Thus, since $\B$ sits nondegenerately over $\hh$,
for all $x \in \Mul(\B)$, $x$ is in the SOT-closure of $\B$.
Since $P \Mul P$ is an SOT-closed *-subalgebra of $\mathbb{B}(\hh)$, 
for all $x \in \Mul(\B)$, $x \in \overline{\B}^{w*} \subseteq
 P \Mul P$.  I.e., for all $x \in \Mul(\B)$,
$x$ is in the w*-closure of $\B$ in $\Mul$.

The last statement follows from the previous statements, and from the
fact that if $\Mul$ is a semifinite von Neumann factor with separable
predual, then $\K_{\Mul}$ is w*-dense in $\Mul$.\\ 
\end{proof}

We will need some reduction arguments, again  
from \cite{GiordanoKaftalNg}, which 
is not yet available.

\begin{lem} \label{lem:QuasiCentralSOTApproximateUnit}
Let $\Mul$ be a semifinite factor with separable predual,  let $\Ss
 \subset \Mul$ be a countable
subset, and let $\Ss_K \subset \K_{\Mul}$ be a countable subset.  

Then we can find a sequence $\{ e_n \}$, of increasing positive elements of $\K_{\Mul}$, for which
the following statements are true:
\begin{enumerate}
\item $e_{n+1} e_n = e_n$ for all $n$.  
\item For all $x \in \Ss$, $\| x e_n - e_n x \| \rightarrow 0$, as $n \rightarrow \infty$. 
\item For all $y \in \Ss_K$, $\| y e_n - y \|, \| e_n y - y \| \rightarrow 0$, as 
$n \rightarrow \infty$.   
\item $e_n \rightarrow 1_{\Mul}$ in the weak*-topology.  
\end{enumerate}
\end{lem}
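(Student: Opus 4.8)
The plan is to run the standard Arveson/quasicentral approximate unit argument, but carefully inside the Breuer ideal $\K_{\Mul}$ of the semifinite factor, and then to upgrade the approximate unit so that it satisfies the extra ``telescoping'' condition $e_{n+1}e_n = e_n$ and the weak*-convergence to $1_{\Mul}$. First I would record the two basic structural facts I need: (i) $\K_{\Mul}$ is a $\sigma$-unital C*-algebra in the sense of having a \emph{countable} approximate unit, because $\Mul$ has separable predual — indeed one can take an increasing sequence $p_n$ of Breuer projections (finite-trace projections, or more precisely projections with $\tau(p_n) < \infty$) with $p_n \to 1_\Mul$ strongly, hence weak*; and (ii) $\K_{\Mul}$ is w*-dense in $\Mul$ (stated already in the proof of Lemma \ref{lem:MultiplierAlgebraOfSubalgebraOfVNA}) and is a (two-sided, norm-closed) ideal of $\Mul$, so multipliers by elements of $\Mul$ preserve it. Now apply Arveson's theorem: given the separable C*-subalgebra $\B_0$ of $\Mul$ generated by $\Ss \cup \Ss_K$ together with a fixed countable approximate unit of $\K_{\Mul}$, and given the separable (hence $\sigma$-unital) ideal $\K_{\Mul} \cap \B_0' \cdots$ — more simply, work in the separable C*-algebra $\C^*(\B_0 \cup \K_{\Mul}^{(0)})$ where $\K_{\Mul}^{(0)}$ is a separable sub-C*-algebra of $\K_{\Mul}$ containing $\Ss_K$ and a countable approximate unit of $\K_{\Mul}$; this algebra has $\K_{\Mul}^{(0)}$ as a $\sigma$-unital ideal, so there is an approximate unit of $\K_{\Mul}^{(0)}$ that is quasicentral relative to $\C^*(\B_0 \cup \K_{\Mul}^{(0)})$. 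Passing to a sequence gives positive $f_n \in \K_{\Mul}$, $0 \le f_n \le 1$, increasing, with $\|x f_n - f_n x\| \to 0$ for all $x \in \Ss$ and $\|y f_n - y\|, \|f_n y - y\| \to 0$ for all $y \in \Ss_K$; this establishes (2) and (3) for the $f_n$.

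The next step is to arrange the telescoping identity (1) and the weak*-convergence (4) simultaneously, by passing to a carefully chosen subsequence and reparametrizing with functional calculus. The standard trick: choose a strictly increasing sequence of indices and replace each $f_{n}$ by $g_n =_{df} h(f_{n})$ for a suitable continuous function $h: [0,1] \to [0,1]$ with $h \equiv 1$ near $1$ and $h \equiv 0$ near $0$, so that $g_{n+1} g_n = g_n$ provided the spectra are separated appropriately; concretely, one passes to a subsequence along which $f_{n_{k+1}} f_{n_k}$ is close to $f_{n_k}$, and then applies functional calculus. The cleanest implementation is to build the $e_n$ inductively: having chosen $e_n$, which lies in some separable sub-C*-algebra, note $e_n \in \K_{\Mul}$ so $e_n$ is ``compact,'' and choose $e_{n+1}$ from the tail of the quasicentral approximate unit close enough to $1$ on the support of $e_n$ to force $e_{n+1} e_n = e_n$ — this uses that for a positive contraction $e_n \in \K_\Mul$ and $\epsilon>0$ there is $\delta>0$ with: any positive contraction $f$ with $\|f e_n - e_n\|<\delta$ can be replaced by $h(f)$ satisfying $h(f) e_n = e_n$, because $(1-f)^{1/2} e_n$ small forces the spectral projection of $f$ above $1/2$ to dominate a projection over which $e_n$ is supported; standard. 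At each stage we also keep the quasicentrality and approximate-identity estimates from the tail, so (2) and (3) survive the passage to the subsequence.

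Finally, for (4): since each finite-trace Breuer projection $p$ satisfies $p \le e_n$ eventually (because $\|e_n p - p\| \to 0$ and we can throw all the $p$'s from a weak*-approximate unit of $\K_{\Mul}$ into $\Ss_K$), and since the chosen approximate unit of $\K_{\Mul}$ converges weak* to $1_\Mul$, monotonicity of the increasing sequence $\{e_n\}$ together with $0 \le e_n \le 1$ forces $e_n \to 1_\Mul$ weak*; more directly, on the separable predual it suffices to test against a countable dense set of normal states, and $\tau((1-e_n)\,\cdot\,) \to 0$ on the generating projections, which by dominated convergence / Cauchy–Schwarz in $L^2(\Mul,\tau)$ spreads to all normal functionals. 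The \emph{main obstacle} is the bookkeeping in the second step: one must interleave (a) picking subsequence indices large enough that the quasicentral/approximate-unit errors from Arveson's theorem are summably small, (b) picking them large enough that the telescoping relation $e_{n+1}e_n = e_n$ can be forced by functional calculus without destroying (a), and (c) ensuring the functional-calculus reparametrization $f \mapsto h(f)$ does not reintroduce commutator error with $\Ss$ — this last point is handled because $x \mapsto \|[x, h(f)]\|$ is controlled by $\|[x,f]\|$ for $h$ a fixed Lipschitz (or polynomial-approximable) function, a routine but essential estimate. None of this is conceptually hard; it is the ``technical and laborious'' part the introduction warns about.
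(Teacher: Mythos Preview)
Your structural fact (i) is false, and this is precisely the subtlety the lemma is designed to handle: for a type $II_{\infty}$ factor $\Mul$, the Breuer ideal $\K_{\Mul}$ is \emph{not} $\sigma$-unital (the paper states this explicitly in the introduction). An increasing sequence of finite projections $p_n \nearrow 1_{\Mul}$ converges only strongly/weak*, not as a norm approximate unit: for instance, one can build a finite-trace projection $q = \sum_k q_k$ from orthogonal pieces $q_k$ with $q_k \perp 1_{[1/k,1]}(h)$ for any prescribed $h \in (\K_{\Mul})_+$, and then $\|(1-h^{1/n})q\| = 1$ for every $n$. So there is no ``countable approximate unit of $\K_{\Mul}$'' to throw into your separable subalgebra $\K_{\Mul}^{(0)}$, and the first paragraph of your plan does not go through as written.

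Your strategy is salvageable, but the repair changes the argument. One can set $D = C^*(\Ss \cup \Ss_K \cup \{p_m\})$; then $D \cap \K_{\Mul}$ is a \emph{separable} ideal of $D$, hence $\sigma$-unital, and Arveson's theorem applies inside $D$. The weak*-convergence (4) then follows because the increasing limit $f$ of the $e_n$ satisfies $p_m f p_m = p_m$ for every $m$, forcing $f \ge p_m$ and hence $f = 1$. The paper takes a different and somewhat cleaner route: it runs the Hahn--Banach separation argument directly on the (uncountable) net of \emph{all} finite projections dominating a given $p$, producing each $e$ as a finite convex combination of projections $p_1, \dots, p_m \ge p$. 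This buys the telescoping relation $e_{n+1}e_n = e_n$ for free---since every $p_k \ge p$ one has $ep = p$, so taking $p$ to contain the (finite-trace) support of $e_n$ gives the identity immediately---and entirely avoids your functional-calculus/subsequence bookkeeping in step~(b) and the Lipschitz commutator estimate in step~(c).
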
   

\begin{proof}
This is a variation of Arveson's quasicentral approximate units
argument (\cite{ArvesonDuke}).

Let $\epsilon > 0$ be given, $x_1, ..., x_n \in \Ss$,  and let $p \in \K_{\Mul}$ be a  projection. 
(Recall that $\K_{\Mul}$ is our notation for the Breuer ideal of $\Mul$, and thus,
$p$ is a finite projection in $\Mul$.) Consider the
net $\{ p_{\alpha} \}_{\alpha \in I}$, consisting
 of all projections in $\K_{\Mul}$ which contain $p$, and ordered by the $\leq$ relation.
(So $p \leq p_{\alpha} \leq p_{\beta} \in Proj(\K_{\Mul})$, for all 
$\alpha, \beta \in I$ with $\alpha \leq \beta$; and the range of 
the net $\{ p_{\alpha} \}_{\alpha \in I}$ is $\{ r \in Proj(\K_{\Mul}) :
p \leq r \}$.)  
Note that $\{ p_{\alpha} \}_{\alpha
\in I}$ 
is an approximate unit for $\K_{\Mul}$.  
Consider $E \subset (\K_{\Mul})^n$ which is given by
$E =_{df} Conv(\{ ([x_1, p_{\alpha}], [x_2, p_{\alpha}], ..., [x_n, p_{\alpha}]) :  \alpha \in I\})$,
where $[x_j, p_{\alpha}] = x_j p_{\alpha} - p_{\alpha} x_j$ for all $1 \leq j \leq n$ and
$\alpha \in I$.   
(So $E$ is the convex hull of certain $n$-tuples of commutators.)  
Now consider the (norm-) closure $\overline{E}$, which 
also a convex set. (So $\overline{E}$ is the closure of $E$
in $(\K_{\Mul})^n$, with the
norm topology).

Suppose, to the contrary, that $0 \notin \overline{E}$.   
By the Hahn--Banach separation theorem, we can find a norm one linear functional  $\rho \in ((\K_{\Mul})^n)^*$ 
and $\delta > 0$ such that for all $y \in \overline{E}$,
$$0 < \delta \leq \rho(y).$$ 
But since $\{ p_{\alpha} \}_{\alpha \in I}$ is an approximate unit 
for $\K_{\Mul}$,  $\{ \bigoplus^n p_{\alpha} \}_{\alpha \in I}$ is an
approximate unit for $(\K_{\Mul})^n$ (where $\bigoplus^n p_{\alpha}
= p_{\alpha} \oplus p_{\alpha} \oplus ... \oplus p_{\alpha}$ (direct sum 
$n$ times)), 
and hence,   
$$\rho(([x_1, p_{\alpha}], [x_2, p_{\alpha}], ..., [x_n, p_{\alpha}])) \rightarrow 0.$$
This is a contradiction.
Hence, $0 \in \overline{E}$.

Since $\epsilon$, $x_1, ..., x_n$ and $p$ were arbitrary, we have that for every $\epsilon > 0$,
for every $x_1, ..., x_n \in \Ss$, 
for every projection $p \in \K_{\Mul}$, we can find projections
$p_1, ..., p_m \in \K_{\Mul}$  with $p \leq p_k$ for $1 \leq k \leq m$ and
$\alpha_1, ..., \alpha_m \in [0,1]$ with $\sum_{k=1}^m \alpha_k = 1$ (i.e., coefficients for a 
convex combination) such that if we define $e =_{df} \sum_{k=1}^m \alpha_k p_k$, then  
$$\| x_j e - e x_j \| < \epsilon \makebox{ for all } 1 \leq j \leq n.$$
Note that $e p = p$, and the support projection of $e$
is also an element of $\K_{\Mul}$ (since $p_1 \vee p_2 \vee ... \vee p_m
\in \K_{\Mul}$). 
Also, for any given finite subset $\F \subseteq \Ss_K$, 
we can choose the projection $p \in \K_{\Mul}$ so that $\| py - y 
\| < \epsilon$ for all $y \in \F$.             

By an inductive construction using the statements in the previous
paragraph, we can construct a sequence $\{ e_n \}$, in $\K_{\Mul}$,
with the required properties.  
(Note also that since $\Mul$ has separable predual, we can find an increasing
sequence $\{ p'_l \}_{l=1}^{\infty}$ of projections in $\K_{\Mul}$
 such that $p'_l \rightarrow
1_{\Mul}$ in the weak* topology.  We can construct
 the sequence $\{ e_n \}$ so that for all $l$, $p'_l \leq e_n$ for sufficiently large $n$.) 
 
\end{proof}

The next result is a variation on results  from the not yet 
available \cite{GiordanoKaftalNg}.

\begin{lem} \label{lem:Reduction}
Let $\Mul$ be a semifinite von Neumann factor with separable predual.
Let
$\Ss \subset \Mul$ be a countable subset, and 
$\Ss_K \subset \K_{\Mul}$ be a countable subset.

Then we can find a separable simple stable C*-subalgebra 
$\B$ 
for which 
$$\Ss_K \subset \B \subseteq \K_{\Mul} \makebox{ and  } 
\Ss \subset \Mul(\B) \subseteq \Mul.$$\\ 
\noindent (Note that $\Mul(\B) \subset \Mul$ by Lemma \ref{lem:MultiplierAlgebraOfSubalgebraOfVNA}.)  
\end{lem}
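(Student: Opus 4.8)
My plan is to obtain $\B$ as the norm-closure of an increasing chain of separable C*-subalgebras of $\K_{\Mul}$, built by an inductive exhaustion argument, using the standard structure of the Breuer ideal as a fixed backbone. First I would fix, inside $\K_{\Mul}$, a system of matrix units $\{ v_{ij} \}_{i,j \geq 1}$ — mutually orthogonal equivalent finite projections $v_{ii}$ with $\sum_i v_{ii} = 1_{\Mul}$ in the strong operator topology, and partial isometries with $v_{ij}^* = v_{ji}$ and $v_{ij} v_{kl} = \delta_{jk} v_{il}$ — which exist since $\Mul$ is a semifinite factor with separable predual. Put $\mathcal{N} =_{df} v_{11} \Mul v_{11}$, a finite factor with separable predual, and hence (being a matrix algebra or a $II_1$ factor) simple as a C*-algebra. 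One then checks that the closed two-sided ideal of $\Mul$ generated by $v_{11}$ is precisely $\K_{\Mul}$, that $p_N =_{df} \sum_{i \leq N} v_{ii}$ is an approximate unit for $\K_{\Mul}$, and that $\K_{\Mul} = \overline{\mathrm{span}\{ v_{i1} \mathcal{N} v_{1j} \}} \cong \mathcal{N} \otimes \K$. Two consequences matter: (i) $\K_{\Mul}$ is simple; and (ii) \emph{any} C*-subalgebra $\B \subseteq \K_{\Mul}$ containing all the $v_{ij}$ automatically satisfies $\B = \overline{\mathrm{span}\{ v_{i1} (v_{11} \B v_{11}) v_{1j} \}} \cong (v_{11} \B v_{11}) \otimes \K$ — since $\{ p_N \}$ is then an approximate unit for $\B$, so $p_N b p_N \to b$ for all $b \in \B$ — and is therefore stable. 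Thus stability comes for free, and the content of the proof is to arrange the remaining conditions while keeping $\B$ separable.

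For the construction, put $\B^{(0)} =_{df} C^*(\Ss_K \cup \{ v_{ij} : i,j \geq 1 \})$, which is separable and contained in $\K_{\Mul}$. Given $\B^{(n)}$, choose a countable dense subset $D_n$ and let $\B^{(n+1)}$ be generated by $\B^{(n)}$ together with: (a) $\{ x d,\, d x : x \in \Ss,\, d \in D_n \}$, all of which lie in $\K_{\Mul}$ because it is an ideal of $\Mul$; and (b) for each $b$ and each nonzero $a$ in a suitable countable subset of $\B^{(n)}$ that contains $D_n$ and is closed under the truncations $d \mapsto (d - q)_+$ ($q \in \mathbb{Q}_{>0}$), finitely many elements $z_1, w_1, \dots, z_k, w_k \in \K_{\Mul}$ with $\| \sum_l z_l a w_l - b \| < 1/n$ — these exist because $\K_{\Mul}$ is simple. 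With the usual interleaving (each relevant pair handled cofinally often), set $\B =_{df} \overline{\bigcup_n \B^{(n)}}$; it is a separable C*-subalgebra of $\K_{\Mul}$ containing $\Ss_K$ and all the $v_{ij}$.

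Now the verification. Stability of $\B$ is immediate from (ii). Step (a) yields $\Ss \cdot \B \subseteq \B$ and $\B \cdot \Ss \subseteq \B$ in the limit (approximate $b \in \B$ by elements of the $D_n$ and use boundedness of $x \in \Ss$), and since $x p_N \in \Ss \cdot \B \subseteq \B$ while $x p_N \to x$ in the strong operator topology (hence weak-$*$), each $x \in \Ss$ lies in $\overline{\B}^{w*}$; Lemma \ref{lem:MultiplierAlgebraOfSubalgebraOfVNA} then gives $\Ss \subseteq \Mul(\B) \subseteq \Mul$. For simplicity, let $J \triangleleft \B$ be a nonzero closed ideal and pick a positive $0 \neq a \in J$. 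For small $\delta > 0$, $(a - \delta)_+$ is a nonzero element of $J$; choosing $a_0$ from some $D_m$ close enough to $(a - \delta)_+$, the standard Cuntz-comparison perturbation estimate shows that a suitable truncation $(a_0 - \delta')_+$ is nonzero, lies in the hereditary subalgebra of $\B$ generated by $a$, and therefore generates a closed ideal contained in $J$. Applying step (b) to this truncation then shows that every element of $\bigcup_n D_n$, hence every element of $\B$, lies in $J$, so $J = \B$ and $\B$ is simple.

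Conceptually the argument is routine once the $\mathcal{N} \otimes \K$ backbone is in place; the delicate points are (1) the bookkeeping around the Breuer ideal — verifying that the chosen matrix units exhaust $\K_{\Mul}$ and that $\{ p_N \}$ is an approximate unit for it, which is exactly what makes the ``$\otimes \K$'' picture, and hence automatic stability, legitimate — and (2), more seriously, the truncation argument in step (b), which is what upgrades ``approximate simplicity'' to genuine simplicity of $\B$. I expect (2) to be the main obstacle. The three requirements do not interfere: requirement (a) only ever enlarges the algebra, never conflicting with simplicity or with containing the matrix units. (One could replace $\{ p_N \}$ above by the quasicentral approximate unit of Lemma \ref{lem:QuasiCentralSOTApproximateUnit}, but this does not seem to be needed for the present lemma.)
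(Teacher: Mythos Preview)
Your proof is correct and takes a genuinely different route from the paper's. The paper uses the quasicentral approximate unit $\{e_n\}$ of Lemma~\ref{lem:QuasiCentralSOTApproximateUnit} as its backbone: it shows $x \in \Mul(\B)$ via the Kasparov-type decomposition $x \approx e_M^{1/2} x e_M^{1/2} + \sum_{m \geq M} (e_{m+1}-e_m)^{1/2} x (e_{m+1}-e_m)^{1/2}$ (which lands in $\Mul(\B)$ because each summand was put into $\B_0$), obtains simplicity by quoting Blackadar's ``separably inheritable'' machinery (working inside the simple hereditary subalgebra $\D = \overline{d\K_{\Mul} d}$), and then verifies stability via the Hjelmborg--R{\o}rdam criterion using partial isometries $v_n$ built from the $e_n$. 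Your approach instead exploits the explicit tensor decomposition $\K_{\Mul} \cong \mathcal{N} \otimes \K$: stability is free once the matrix units $v_{ij}$ are in $\B$; the multiplier condition $\Ss \subset \Mul(\B)$ follows from the straightforward closure step (a) together with $p_N \to 1_{\Mul}$ weak*; and you reprove the separable-inheritability of simplicity by hand via step (b) and the Cuntz truncation trick. Your argument is more elementary---no quasicentral approximate unit, no Hjelmborg--R{\o}rdam, no appeal to \cite{BlackadarOABook}---but it is more tied to the concrete structure of semifinite factors, whereas the paper's quasicentral scheme is the template that generalizes to other multiplier-algebra settings. Your closing parenthetical is apt: for this lemma alone, Lemma~\ref{lem:QuasiCentralSOTApproximateUnit} is indeed not needed.
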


\begin{proof}

Since $\Ss \subseteq \Mul$ is countable, let
$\Ss = \{ x_n : n \geq 1 \}$ be an enumeration of $\Ss$.

Plug $\Mul$, $\Ss$ and $\Ss_K$ 
into Lemma \ref{lem:QuasiCentralSOTApproximateUnit},  to get a sequence
$\{ e_n \}$, in $(\K_{\Mul})_+$, which satisfies the conclusions of Lemma
\ref{lem:QuasiCentralSOTApproximateUnit}.  We may assume that $e_0 =_{df} 0$. By replacing
$\{ e_n \}$ with a subsequence if necessary, we may assume that for all $m \geq n \geq 1$,
\begin{equation} \label{equ:StrongQuasicentralizeS}
\| (e_m - e_{m-1})^{1/2} x_n  - x_n (e_m - e_{m-1})^{1/2} \| 
< \frac{1}{10^m} \makebox{  and  }
\| e_m^{1/2} x_n - x_n e_m^{1/2} \| < \frac{1}{10^m}.
\end{equation}

Let $d =_{df} \sum_{n=1}^{\infty} \frac{e_n}{2^n} \in (\K_{\Mul})_+$, 
and let $\D \subset \K_{\Mul}$ be given by 
$\D =_{df} \overline{d \K_{\Mul} d}$.
Note that by Lemma \ref{lem:QuasiCentralSOTApproximateUnit} item (3), 
$\T_K \subset \D$.     
 
Next for each $n$, let $p_n \in \Mul$ be the support projection of
$e_n$;  since $e_{n+1} e_n = e_n$, $e_{n+1} p_n = p_n$, and hence,
$p_n \in \D$.  
Now by Lemma \ref{lem:QuasiCentralSOTApproximateUnit} item (4), 
$e_n \nearrow 1_{\Mul}$ in the weak* topology.  Hence, for each $n$, we 
can find $m_n \geq n + 3$ 
and a partial isometry $v_n \in \D$ with
\begin{equation} \label{equ:GetHJ1}     
v_n^* v_n = p_n \makebox{ and  } (e_{m_n} - e_{n+2}) v_n v_n^* = v_n v_n^*, 
\end{equation}
and hence,
\begin{equation} \label{equ:GetHJ2}
v_n e_n v_n^* \in Her(e_{m_n} - e_{n+2}). 
\end{equation}

Let $\B_0 \subseteq \K_{\Mul}$ be the separable C*-subalgebra that is given by
\begin{equation} \label{equ:B0Definition}
\B_0 =_{df} C^*(\Ss_K \cup \{ 
\makebox{ }e_m, \makebox{ } v_m, \makebox{ } e_m^{1/2} x e_m^{1/2}, \makebox{ } (e_m - e_{m-1})^{1/2} x
(e_m - e_{m-1})^{1/2}  :  x \in \Ss, 
\makebox{ } m \geq 1 \}).\end{equation}
From the above, we must have that $\B_0 \subset \D$. 
Note also that for all $m$, $p_m = v_m^* v_m \in \B_0$.

Now since $\D$ is simple, and since simplicity is a
separably inheritable property (see \cite{BlackadarOABook}
Definition II.8.5.1 and II.8.5.6), let $\B$ be a separable
simple C*-algebra 
such that 
$$\B_0 \subseteq \B \subset \D.$$ 
Note that since $\{ e_n \}$ is a sequential approximate unit for $\D$,
$\{ e_n \}$ is a sequential approximate unit for $\B$.
Hence, $\{ p_n \}$ is a sequential approximate for $\B$, consisting of
projections.

We next prove that $\B$ is stable.  Let $p \in \B$ be an arbitrary
projection.  Since $\B = \overline{\bigcup_{n=1}^{\infty} e_n \B e_n}$,
and since $e_{n+1} e_n = e_n$ for all $n$, we can choose $N \geq 1$ and
a projection $p' \in e_N \B e_N$ for which 
$\| p - p' \| < \frac{1}{10}$.  As a consequence, $p \sim p'$ in $\B$.
By (\ref{equ:GetHJ1}) and (\ref{equ:GetHJ2}), we can find a projection
$q' \in \B$ with $q' \perp p'$ such that
$q' \sim p' \sim p$. 
Now since $p \approx_{\frac{1}{10}} p'$, 
$\| q' p \| < \frac{1}{10}$.  Hence, we can find a projection $q \in \B$
with $q \perp p$ and $q \sim q' \sim p$.  Since $p$ is arbitrary, we have
proven that $\B$ satisfies the ``if" condition in the statement of
\cite{HjelmborgRordam} Theorem 3.3.  Hence, since $\B$ is a separable
C*-algebra with a sequential approximate unit consisting of
projections, by \cite{HjelmborgRordam} Theorem 3.3, 
$\B$ is stable.

We next prove that $\Ss \subseteq \Mul(\B)$. 
Let $x \in \Ss$ be arbitrary. Hence, since $\Ss = \{ x_n : n \geq 1 \}$,  
 choose $N \geq 1$ such that $x = x_N$.   
Let $\delta > 0$ be arbitrary.
Choose $M \geq N$ for which
$\frac{1}{10^{M-3}} < \delta$.
Hence, we have that
\begin{eqnarray}
\nonumber  x
& = & \left(e_M + \sum_{m=M}^{\infty} (e_{m+1} - e_m)\right) x \makebox{  (since $e_m \rightarrow 1_{\Mul}$
 in the weak* topology)}\\
\nonumber & \approx_{\delta} &
e_M^{1/2} x e_M^{1/2} + \sum_{m=M}^{\infty} (e_{m+1} - e_m)^{1/2} x (e_{m+1} - e_m)^{1/2}\\
\label{equ:Quasicentralizex} & & \makebox{  (by (\ref{equ:StrongQuasicentralizeS}) and
the definitions of } \delta, \makebox{ } M, 
\makebox{ and  }x\makebox{)}\\
\nonumber
\end{eqnarray}
By (\ref{equ:B0Definition}), since $\B_0 \subseteq \B$, and since $\{ e_n \}$ is an approximate
unit for $\B$, we have that 
$$e_M^{1/2} x e_M^{1/2} + \sum_{m=M}^{\infty} (e_{m+1} - e_m)^{1/2} x (e_{m+1} - e_m)^{1/2}
\in \Mul(\B),$$  
where the sum converges strictly in $\Mul(\B)$.
Hence, by (\ref{equ:Quasicentralizex}), $x$ is within a norm distance $\delta$ of an
element of $\Mul(\B) \subseteq \Mul$.
Since $\delta > 0$ was arbitrary, $x \in \Mul(\B)$.
Since $x \in \Ss$ was arbitrary,
$\Ss \subseteq \Mul(\B)$. 
  
Finally, by the definition of $\B_0$ (see 
(\ref{equ:B0Definition})),
$\Ss_K \subseteq \B_0  \subseteq \B$, and we are done. 
\end{proof}

Recall that for a C*-algebra $\D$, $S\D =_{df} C_0(0,1) \otimes \D$
is the \emph{suspension} of $\D$.

\begin{lem} \label{lem:ReductionJazz}
Let $\Mul$ be a semifinite von Neumann factor with separable predual,
and recall that $\K_{\Mul}$ denotes the Breuer ideal of $\Mul$.  
Let $\T, \T_K, \T_1, \T_{1,K}$ be countable sets where
$$\T \subset \Mul, \makebox{ } \T_K \subset \K_{\Mul}, 
\makebox{ } \T_1 \subset \Mul(S \K_{\Mul}), \makebox{ and }
\T_{1,K} \subset S \K_{\Mul}.$$

Then we can find a separable simple stable 
C*-algebra $\B \subseteq \K_{\Mul}$ 
such that  
$$\T \subset \Mul(\B), \makebox{ } \T_K \subset \B,
\makebox{ } \T_1 \subset \Mul(S \B), \makebox{ and }
\T_{1,K} \subset S \B.$$

\noindent (Note that since $S \B \subset S \K_{\Mul}$,
$\Mul(S \B) \subset (S \B)^{**} \subset (S \K_{\Mul})^{**}$.
Since also $\Mul(S \K_{\Mul}) \subset (S \K_{\Mul})^{**}$, the 
statement ``$\T_1 \subset \Mul(S \B)$" makes sense.)\\ 
\end{lem}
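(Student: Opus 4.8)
The plan is to deduce this from a single application of Lemma \ref{lem:Reduction}, after replacing the suspension-side data $\T_1$, $\T_{1,K}$ by countable subsets of $\Mul$ and $\K_{\Mul}$. For $\T_{1,K}$: identifying $S\K_{\Mul}$ with $C_0((0,1),\K_{\Mul})$, each $\eta\in\T_{1,K}$ is a continuous $\K_{\Mul}$-valued function vanishing at the endpoints, hence lies in $C_0((0,1),C^*(\{\eta(q):q\in\mathbb{Q}\cap(0,1)\}))$. So if $\T_{1,K}^{\flat}\subseteq\K_{\Mul}$ denotes the countable set of all such values $\eta(q)$, then $\T_{1,K}^{\flat}\subseteq\B$ forces $\T_{1,K}\subseteq S\B$ for any C*-subalgebra $\B\subseteq\K_{\Mul}$.

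For $\T_1$: for each $q\in(0,1)$ let $\mathrm{ev}_q\colon S\K_{\Mul}\to\K_{\Mul}$ be evaluation at $q$; it is a surjective $*$-homomorphism, hence nondegenerate, so it extends to a unital $*$-homomorphism $\overline{\mathrm{ev}}_q\colon\Mul(S\K_{\Mul})\to\Mul(\K_{\Mul})=\Mul$. Put $[\xi]_q:=\overline{\mathrm{ev}}_q(\xi)\in\Mul$ for $\xi\in\T_1$. If $\B\subseteq\K_{\Mul}$ is a C*-subalgebra with $[\xi]_q\in\Mul(\B)$ for all $\xi\in\T_1$ and all $q\in\mathbb{Q}\cap(0,1)$, then for every $f\in C_0((0,1))$ and $b\in\B$ the elements $\xi(f\otimes b)$ and $(f\otimes b)\xi$ lie in $S\K_{\Mul}$ and have value $f(q)[\xi]_qb\in\B$, respectively $f(q)b[\xi]_q\in\B$, at each rational $q$; by continuity and closedness of $\B$ these values lie in $\B$ at every point, so $\xi(f\otimes b),(f\otimes b)\xi\in C_0((0,1),\B)=S\B$. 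Since such elements are dense in $S\B$ and multiplication by $\xi$ is bounded, $\xi\,S\B\subseteq S\B$ and $S\B\,\xi\subseteq S\B$, and hence (once $\Mul(S\B)$ is realized inside $(S\K_{\Mul})^{**}$ as in the parenthetical remark of the statement) $\xi\in\Mul(S\B)$; thus $\T_1\subseteq\Mul(S\B)$.

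With these reductions in place, set $\Ss:=\T\cup\{[\xi]_q:\xi\in\T_1,\ q\in\mathbb{Q}\cap(0,1)\}\subseteq\Mul$ and $\Ss_K:=\T_K\cup\T_{1,K}^{\flat}\subseteq\K_{\Mul}$, both countable, and apply Lemma \ref{lem:Reduction} to $\Mul$, $\Ss$, $\Ss_K$: this yields a separable simple stable C*-algebra $\B$ with $\Ss_K\subseteq\B\subseteq\K_{\Mul}$ and $\Ss\subseteq\Mul(\B)\subseteq\Mul$. Then $\T\subseteq\Mul(\B)$; $\T_K\subseteq\B$; $\T_{1,K}\subseteq S\B$ by the first reduction (since $\T_{1,K}^{\flat}\subseteq\B$); and $\T_1\subseteq\Mul(S\B)$ by the second (since $\{[\xi]_q\}\subseteq\Mul(\B)$). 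These are exactly the four required assertions.

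The step I expect to require the most care is the reduction of the suspension-level multiplier condition ``$\T_1\subseteq\Mul(S\B)$'' to the $\Mul$-level condition ``$[\xi]_q\in\Mul(\B)$'', together with the correct interpretation of the former inside $(S\K_{\Mul})^{**}$. The delicate ingredient is the non-$\sigma$-unitality of $\K_{\Mul}$: $S\B$ is never a nondegenerate subalgebra of $S\K_{\Mul}$ (an approximate unit of the separable algebra $S\B$ cannot be one for the non-$\sigma$-unital $S\K_{\Mul}$), so one cannot simply quote the standard embedding of the multiplier algebra of a nondegenerate subalgebra into the ambient multiplier algebra, and the identification of $\Mul(S\B)$ with a subalgebra of $(S\K_{\Mul})^{**}$ via $\Mul(S\B)\subseteq(S\B)^{**}\subseteq(S\K_{\Mul})^{**}$ must be set up by hand, as the parenthetical in the statement indicates. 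Once that identification is fixed and one checks that the restriction of each $\xi\in\T_1$ to $S\B$ represents the intended element there, the rest is the routine bookkeeping above, and everything else is delegated to Lemma \ref{lem:Reduction}.
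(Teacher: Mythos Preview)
Your proposal is correct and follows essentially the same approach as the paper: evaluate the suspension-level data $\T_1, \T_{1,K}$ at a countable dense subset of $(0,1)$, apply Lemma~\ref{lem:Reduction} once to the combined countable data in $\Mul$ and $\K_{\Mul}$, then use continuity to recover the suspension-level conclusions. The paper streamlines the delicate identification you flag by working throughout with the concrete realization $\Mul(S\K_{\Mul}) = C_b((0,1), \Mul)_{stri}$ (and likewise $\Mul(S\B) = C_b((0,1), \Mul(\B))_{stri}$), so that ``evaluation at $t$'' and the containment $\T_1 \subset \Mul(S\B)$ become immediate without passing through $(S\K_{\Mul})^{**}$.
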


\begin{proof}
Let $E \subset (0,1)$ be a countable dense subset.

We may view each $f \in \T_1$ as a function in 
$C_b((0,1), \Mul)_{stri} = \Mul(S \K_{\Mul})$, 
where $C_b((0,1), \Mul)_{stri}$ denotes the
set of all bounded 
strictly continuous functions from $(0,1)$ to $\Mul(\K_{\Mul}) = \Mul$ (see
Lemma \ref{lem:MultiplierAlgebraOfSubalgebraOfVNA}).  

Apply Lemma \ref{lem:Reduction} to 
$$\T_2 =_{df} \T \cup \{ f(t) : f \in \T_1 \makebox{ and } t \in E \} \subset \Mul$$
and
$$\T_{2,K} =_{df} \T_K \cup \{ g(t) : g \in \T_{1,K} \makebox{ and } t \in E \} \subset \K_{\Mul}$$
to get a separable stable simple C*-algebra $\B$ 
such that
$$\T_{2, K} \subset \B \subset \K_{\Mul} \makebox{  and  } \T_2 \subset \Mul(\B)  \subset \Mul.$$

\noindent \emph{Claim 1:} For all $f \in \T_1$ and all $t \in (0,1)$, $f(t) \in \Mul(\B)$.\\
\noindent \emph{Proof of Claim 1:} 
Let $f \in \T_1$ and $t \in (0,1)$ be arbitrary.
Let $\{ t_n \}$ be a sequence in $E$ for which
$t_n \rightarrow t$.  Since $f \in C_b((0,1), \Mul)_{stri}$, 
$f(t_n) \rightarrow f(t)$ strictly in $\Mul(\K_{\Mul}) = \Mul$.    
Therefore, for all $b \in \B \subset \K_{\Mul}$, 
$f(t_n)b \rightarrow f(t)b$ and $b f(t_n)  \rightarrow b f(t)$ in the norm topology.
Since $\T_2 \subset \Mul(\B)$, $f(t_n) \in \Mul(\B)$, and hence,
for all $b \in \B$,  $f(t_n) b, b f(t_n) \in \B$, for all $n$. 
Hence, for all $b \in \B$, $f(t)b, b f(t) \in \B$.
Hence, $f(t) \in \Mul(\B)$.  Since $f$, $t$ are arbitrary, we have proven the Claim.\\
\noindent \emph{End of proof of Claim 1.}\\

By an argument similar to (and easier than) that of Claim 1, we can show that
for all $g \in \T_{1,K}$ 
and all $t \in (0,1)$, $g(t) \in \B$.  From this, it is not hard to see
that $\T_{1,K} \subset S \B$.  

We can finish the argument by proving the following Claim:\\ 

\noindent \emph{Claim 2:}  
$\T_1 \subset \Mul(S \B)$.\\
\noindent \emph{Proof of Claim 2:} Let $f \in T_1$ be arbitrary.
So $f \in C_b((0,1), \Mul)_{stri} = \Mul(S \K_{\Mul})$. Hence, for all $b \in
\B \subset \K_{\Mul}$ the maps $(0,1) \rightarrow \K_{\Mul}$ given by 
$t \mapsto f(t)b$ and $t \mapsto b f(t)$ are norm continuous. 
But by Claim 1, for all $t \in (0,1)$,
$f(t) \in \Mul(\B)$, and hence, for all $b \in \B$, $f(t)b, bf(t) \in \B$.  
Hence, $f \in C_b((0,1), \Mul(\B))_{stri} = \Mul(S \B)$.\\
\noindent{End of proof of Claim 2.}
\end{proof}

\subsection{Concrete and technical reduction arguments}

In this subsection, we provide three concrete, technical reduction arguments, which
are applications of the more abstract reduction results of the previous
part.  While these three lemmas and their proofs look technical, 
they are conceptually not so difficult variations
of each other.  So we will sketch 
the proofs for the first 2 lemmas, and leave the
third to the reader.

\begin{lem}
\label{lem:PAsympUEReduction}
Let $\A$ be a separable nuclear C*-algebra, and let $\Mul$ be a semifinite von Neumann factor 
with separable predual.  

Let $\phi, \psi : \A \rightarrow \Mul$ be two essential trivial extensions 
such that the following statements are true:
\begin{enumerate}
\item
 Either both $\phi$ and $\psi$ (and hence $\A$) are (strongly) unital or both
$\phi$ and $\psi$ have large complement.
\item 
$\phi(a) - \psi(a) \in \K_{\Mul}$ for all $a \in \A$. 
%\item $\phi \sim_{pasymp} \psi$. 
\end{enumerate}

Then there exists a separable simple stable C*-subalgebra 
$\B \subset \K_{\Mul}$ 
such that the following statements are  true:

\begin{enumerate}
\item[(a)]
$\{ 1_{\Mul} \} \cup ran(\phi) \cup ran(\psi) \subseteq \Mul(\B) \subseteq \Mul$.
\item[(b)] Either both $\phi$ and $\psi$ (and hence $\A$) are (strongly)
 unital or both have large 
complement (as maps into $\Mul(\B)$).   
\item[(c)]  
$\phi(a) - \psi(a) \in \B$ for
all $a \in \A$.
\item[(d)] For all $a \in \A_+ - \{ 0 \}$, there exist $x_{1,a}, y_{1,a} \in \C(\B)$ 
for which
$x_{1,a} (\pi \circ \phi)(a) (x_{1,a})^* = 1 = y_{1,a} (\pi \circ \psi)(a) (y_{1,a})^*$. 
\end{enumerate}
(Recall that $\Mul(\B) \subseteq \Mul$ by  
Lemma \ref{lem:MultiplierAlgebraOfSubalgebraOfVNA}.)    

Suppose, in addition, that
we have that
\begin{enumerate}
\item[(3)] either 
\begin{enumerate}
\item[i.] $\phi \sim_{pasymp} \psi$ (as maps 
into $\Mul$) or 
\item[ii.] $\phi \sim_{asymp, \K_{\Mul}} \psi$ (as maps into $\Mul$), 
where the path of unitaries 
is in $U(\pi_M^{-1}((\pi_M \circ \phi)(\A)'))_0$ 
\end{enumerate}
\end{enumerate}
Then we can choose $\B$, as above, so that additionally,
\begin{enumerate}
\item[(e)] either 
\begin{enumerate}
\item[i.] $\phi \sim_{pasymp} \psi$, as maps into $\Mul(\B)$,
or 
\item[ii.]  $\phi \sim_{asymp, \B} \psi$   
(as maps into $\Mul(\B)$), where the path of unitaries
is in $U(\pi_{B}^{-1}((\pi_B \circ \phi)(\A)'))_0$. 
\end{enumerate}
\end{enumerate}
\end{lem}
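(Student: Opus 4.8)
The plan is to prove this by a single application of the abstract reduction machinery of Lemma \ref{lem:ReductionJazz}, with all the required countable sets assembled before invoking it. First I would fix a countable dense *-subalgebra $\A_0 \subseteq \A$ (possible since $\A$ is separable), and note that by continuity of $\phi$ and $\psi$ it suffices to control the images of $\A_0$. Next I would assemble the data: for item (d), for each $a$ in a countable dense subset $(\A_0)_+^{(1)}$ of $(\A_+ \setminus \{0\})$, use the fact that $\Mul/\K_{\Mul}$ is simple purely infinite (mentioned in the introduction) to pick witnesses $X_a, Y_a \in \Mul$ with $\pi_M(X_a)(\pi_M\circ\phi)(a)\pi_M(X_a)^* = 1 = \pi_M(Y_a)(\pi_M\circ\psi)(a)\pi_M(Y_a)^*$; for item (b) in the large-complement case, pick a projection $p\in\Mul$, $p\sim 1_\Mul$, $p\perp\phi(\A)$, together with a witnessing isometry or partial isometry implementing $p\sim 1_\Mul$; and for item (e), whichever of (3)(i) or (3)(ii) holds, pick a countable norm-dense subset $\{u_{t_k}\}$ of the path of unitaries (the path is norm-continuous on $[1,\infty)$, hence has separable, indeed totally bounded on each $[1,n]$, range). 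In case (3)(ii) also include elements of $\Mul$ witnessing that each $u_{t_k}$ lies in $\pi_M^{-1}((\pi_M\circ\phi)(\A)')_0$ --- i.e. a countable set commuting modulo $\K_\Mul$ with $\phi(\A_0)$ and a continuous path connecting it to $1$, of which again take a countable dense skeleton.

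Then I would feed into Lemma \ref{lem:ReductionJazz} the countable set $\T$ consisting of: $1_\Mul$; $\phi(\A_0) \cup \psi(\A_0)$; all the $X_a,Y_a$; the projection $p$ and its witnessing partial isometry (large-complement case); all $u_{t_k}$ and their path-skeletons; and the relevant commutant-witnesses. The countable set $\T_K$ should contain $\{\phi(a)-\psi(a) : a \in \A_0\}$ (these lie in $\K_\Mul$ by hypothesis (2)). The suspension-level sets $\T_1 \subset \Mul(S\K_\Mul)$ and $\T_{1,K}\subset S\K_\Mul$ are where the \emph{paths} go: encode the norm-continuous path $\{u_t\}_{t\in[1,\infty)}$, reparametrised to $(0,1)$, as an element $u \in \Mul(S\widetilde{\K_\Mul})$ or more precisely handle it as a path in the unitisation, and in case (3)(i) the path lives in $\C1 + \K_\Mul$ so the ``correction'' $u_t - 1$ gives a genuine element of $S\K_\Mul$ to place in $\T_{1,K}$; in case (3)(ii) the path is only in $\Mul$ (unitaries mod $\K_\Mul$), so it goes into $\T_1$, while $u_t\phi(a)u_t^* - \psi(a) \in \K_\Mul$ for each fixed $a$, reparametrised, goes into $\T_{1,K}$. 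Lemma \ref{lem:ReductionJazz} then returns a separable simple stable $\B \subseteq \K_\Mul$ with $\T\subset\Mul(\B)$, $\T_K\subset\B$, $\T_1\subset\Mul(S\B)$, $\T_{1,K}\subset S\B$.

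It then remains to verify that this $\B$ does the job. Item (a) is immediate since $1_\Mul, \phi(\A_0),\psi(\A_0)\subset\Mul(\B)$ and $\Mul(\B)$ is norm-closed, so $\overline{\phi(\A_0)}=\phi(\A)$ etc. Item (c) is immediate from $\T_K\subset\B$ and density/continuity. Item (b): in the unital case $1_\Mul\in\Mul(\B)$ gives that $\phi,\psi$ are (strongly) unital into $\Mul(\B)$; in the large-complement case $p\in\Mul(\B)$ and $p\perp\phi(\A)$ persists, and $p\sim 1_{\Mul(\B)}$ because we threw in the witnessing partial isometry, which one checks is a multiplier of $\B$. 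Item (d): $X_a,Y_a\in\Mul(\B)$, so $\pi_B(X_a),\pi_B(Y_a)\in\C(\B)$ are the desired $x_{1,a},y_{1,a}$ for $a$ in the dense set, and a perturbation/continuity argument (using purely infinite simplicity of $\C(\B)$, or just direct estimate) extends this to all $a\in\A_+\setminus\{0\}$. Item (e): the path elements now lie in $\Mul(\B)$ or $\C1+\B$ respectively, norm-continuity is inherited, $u_t\phi(a)u_t^* - \psi(a)\in\B$ for $a$ dense (hence all $a$) since the reparametrised version is in $S\B$, and the commutant condition survives because $\pi_B^{-1}((\pi_B\circ\phi)(\A)')$ is detected by the commutant-witnesses we included. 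The main obstacle I expect is the bookkeeping for the \emph{paths of unitaries} at the suspension level: one must be careful that ``norm-continuous path on $[1,\infty)$'' is correctly matched to an honest element of a suspension algebra (which forces decay at the endpoints $0,1$ of the reparametrisation), that in case (3)(i) the path genuinely lands in $\C1+\B$ rather than merely $\Mul(\B)$, and that the $\pi_M$-commutant / connectedness-to-$1$ data is faithfully transported to $\pi_B$-commutant data --- this is exactly the kind of ``technical and laborious but conceptually routine'' verification the authors warn about, and is the reason they relegate the analogous third lemma to the reader.
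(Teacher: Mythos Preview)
Your overall strategy---assemble countable witness sets and invoke a reduction lemma---is exactly what the paper does, but you take an unnecessarily complicated route and in doing so create a genuine obstacle.

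The paper does \emph{not} use Lemma~\ref{lem:ReductionJazz} here; it uses the simpler Lemma~\ref{lem:Reduction}, with no suspension level at all. The path $\{u_t\}_{t\in[1,\infty)}$ is handled by sampling it at a countable dense set of parameters (the rationals), placing those samples $u_n$ into $\Ss\subset\Mul$, and---crucially---placing their compact parts into $\Ss_K\subset\K_{\Mul}$. In case (3)(i) one writes $u_n=\alpha_n 1+b_n$ with $b_n\in\K_{\Mul}$ and puts each $b_n$ into $\Ss_K$; one also puts $u_m\phi(a_n)u_m^*-\psi(a_n)\in\K_{\Mul}$ into $\Ss_K$. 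Once $\B$ is produced, norm-closedness of $\Mul(\B)$ and of $\B$ recovers the full continuous path in $\Mul(\B)$ (resp.\ $\mathbb{C}1+\B$) from the rational skeleton. No suspension encoding is needed, and the ``decay at endpoints'' problem you flag simply does not arise.

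Your suspension approach, by contrast, has a real gap: the reparametrised path $t\mapsto u_t-\alpha_t 1$ is a norm-continuous $\K_{\Mul}$-valued function on $(0,1)$ but has no reason to vanish at the endpoints, so it is \emph{not} an element of $S\K_{\Mul}=C_0(0,1)\otimes\K_{\Mul}$. You acknowledge this as ``the main obstacle,'' but it is not merely bookkeeping---it is a wrong mechanism.

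There is a second, independent gap. Your $\T_K$ contains only $\{\phi(a)-\psi(a):a\in\A_0\}$. For item (d) you need $\pi_B(X_a)(\pi_B\circ\phi)(a)\pi_B(X_a)^*=1_{\C(\B)}$, i.e.\ $X_a\phi(a)X_a^*-1\in\B$. You know this element lies in $\K_{\Mul}$ and in $\Mul(\B)$, but $\Mul(\B)\cap\K_{\Mul}$ need not equal $\B$, so this does not follow automatically. The paper explicitly includes $k_{1,a_n}=x'_{a_n}\phi(a_n)(x'_{a_n})^*-1$ and $k_{2,a_n}$ in $\Ss_K$ for precisely this reason. The same issue affects your treatment of (e): you must place $u_m\phi(a_n)u_m^*-\psi(a_n)$ and (in case (i)) the $b_n$ directly into $\T_K$, not rely on the suspension level.
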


\begin{proof}
We will prove the case where $\phi$ and $\psi$ both have large complement
and $\phi \sim_{pasymp} \psi$ (as maps into $\Mul$).
 The proofs of the other cases are similar.

So suppose that, additionally (to the other hypotheses),
 $\phi$ and $\psi$ both have large complement, and   
$\phi \sim_{pasymp} \psi$ as maps into $\Mul(\K_{\Mul}) = \Mul$. So  
we can find a norm-continuous
path   
$\{ u_t \}_{t \in [1, \infty)}$  of unitaries in 
$\mathbb{C}1_{\Mul} + 
\K_{\Mul}$ such that
for all $a \in \A$, 
\begin{eqnarray}
\label{Jan120251AM} & & u_t \phi(a) u_t^* - \psi(a) \in \K_{\Mul} \makebox{ for all } t \in [1, \infty) \makebox{ and }\\
\label{Jan120252AM} & & \| u_t \phi(a) u_t^* - \psi(a) \| \rightarrow 0 
\makebox{  as  } t \rightarrow \infty.
\end{eqnarray}

Since $\phi$ and $\psi$ both have large complement, let $v_1, v_2 \in \Mul$ be partial 
isometries such that
$$v_1^* v_1 = 1_{\Mul} = v_2^* v_2,
\makebox{ }v_1 v_1^* \perp \phi(\A), \makebox{  and  } v_2 v_2^* \perp
 \psi(\A).$$

Note that since $\Mul/\K_{\Mul}$ is simple purely infinite, for all
$a \in \A_+ - \{ 0 \}$ with $\| a \| = 1$, we can find $x_a, y_a \in \Mul/\K_{\Mul}$
with $\| x_a \|, \| y_a \| < 2$ for
which 
$$x_a (\pi \circ \phi)(a) x_a^* = 1 = y_a (\pi \circ \psi)(a) y_a^*.$$  
  
Hence, for all $a \in \A_+ - \{ 0 \}$ with $\| a \| = 1$, lift $x_{a}, y_{a}$
to $x'_{a}, y'_{a} \in \Mul$ respectively (so $\pi(x'_{a}) = x_{a}$ and
$\pi(y'_{a}) = y_{a}$) with $\| x'_a \|, \| y'_a \| < 2$.  Then for all $a \in \A_+ - \{ 0 \}$
with $\| a \| = 1$,  
we can find $k_{1,a}, k_{2, a} \in  \K_{\Mul}$ for which 
\begin{equation} \label{equ:Jan1202513AM}
x'_a \phi(a) (x'_a)^* = 1_{\Mul} + k_{1,a} 
\makebox{  and  }
y'_a \psi(a) (y'_a)^* = 1_{\Mul} + k_{2,a}.  
\end{equation}

Let $\{ a_n : n \geq 1\}$ be a countable dense subset of the closed unit sphere of $\A_+$, and 
let $\{ u_n : n \geq 1 \}$ be an enumeration of the  countable  set 
$\{ u_t : t \in \mathbb{Q} \cap
[1, \infty) \}$.
Also, for all $n \geq 1$, $u_n$ has the form 
\begin{equation} \label{equ:Jan120253AM} u_n = \alpha_n 1_{\Mul} + b_n \makebox{ where  } \alpha_n \in S^1 \subset \mathbb{C} \makebox{  and  } 
b_n \in \K_{\Mul}. \end{equation}

Let $$\Ss =_{df} \{ 1_{\Mul}, v_1, v_2 \} \cup  
\{ x'_{a_n}, y'_{a_n}, \phi(a_n), \psi(a_n),  u_n : n \geq 1 \} \subseteq \Mul.$$     
Let $$\Ss_K =_{df}  
\{ k_{1, a_n}, k_{2, a_n}, b_n, \phi(a_n) - \psi(a_n), \makebox{ }
u_m \phi(a_n) u_m^* - \psi(a_n) : n, m \geq 1 \} \subseteq \K_{\Mul}.$$
Plug $\Mul$, $\Ss$ and $\Ss_K$ into Lemma \ref{lem:Reduction},  
to get a separable simple stable C*-algebra $\B$ 
such that          
$\Ss \subset \Mul(\B) \subset \Mul$, and
$\Ss_K \subset \B \subset \K_{\Mul}$.
(Recall that $\Mul(\B) \subseteq \Mul$ by Lemma \ref{lem:MultiplierAlgebraOfSubalgebraOfVNA}.)

Since  
$\{ a_n : n \geq 1\}$ is a countable dense subset of the closed unit sphere of $\A_+$,
and $\mathbb{Q} \cap [1, \infty)$ is dense in $[1, \infty)$, and
by the definitions of $\Ss$ and $\Ss_K$, we get the conclusions (a)-(e).\\ 
\end{proof}

\begin{lem}
\label{lem:ExtReduction}
Let $\A$ be a separable nuclear C*-algebra, and let
$\Mul$ be a semifinite von Neumann factor with separable predual.

Let $\phi, \psi: \A  \rightarrow \Mul$ be essential trivial extensions, and
let $\theta: \A \rightarrow \Mul(S \K_{\Mul})$ be a trivial extension such that
the following statements are true:
\begin{enumerate}
\item Either $\phi, \psi$ and $\theta$ are all unital (and hence $\A$ is unital)
or $\phi, \psi$ and $\theta$ all have large complement.
\item $\phi(a) - \psi(a) \in \K_{\Mul}$ for all $a \in \A$.
\item There exists a norm-continuous path $\{ u_s \}_{s \in [1, \infty)}$ 
of unitaries in $\Mul(S \K_{\Mul})/S\K_{\Mul}$ such that for all $a \in \A$, 
$u_s (\pi \circ \theta)(a) u_s^* \rightarrow \pi(\{ (1-t) \phi(a) + t \psi(a) \}_{t \in (0,
1)})$
in norm.
\end{enumerate}

Then there exists a separable simple stable C*-subalgebra $\B \subset \K_{\Mul}$
such that the following statements are true:
\begin{enumerate}
\item[(a)] $\{ 1_{\Mul} \} \cup ran(\phi) \cup ran(\psi) 
\subset \Mul(\B) \subset \Mul$ and 
$\{ 1_{\Mul(S \K_{\Mul})} \} \cup ran(\theta) \subset \Mul(S \B)$. 
\item[(b)] As maps into $\Mul(\B)$ (or $\Mul(S \B)$),
either $\phi, \psi$ (resp. $\theta$) are all unital (and hence $\A$ is unital)
or $\phi, \psi$ (resp. $\theta$) all have large complement.
\item[(c)] $\phi(a) - \psi(a) \in \B$ for all $a \in \A$.
\item[(d)] For all $a \in \A_+ - \{ 0 \}$, there exist $x_{1,a}, y_{1,a} \in \C(\B)$ such that
$x_{1,a} (\pi \circ \phi)(a) x_{1,a}^* = 1 = 
y_{1,a} (\pi \circ \psi)(a) y_{1,a}^*$.
\item[(e)] There exists a norm-continuous path $\{ w_s \}_{s \in [1, \infty)}$
of unitaries in $\C(S\B)$ such that for all $a \in \A$, 
$w_s (\pi \circ \theta)(a) w_s^* \rightarrow \pi(\{ (1-t) \phi(a) + t \psi(a) \}_{t
\in (0,1)})$
in norm (as maps into $\C(S\B)$).
\end{enumerate}
\end{lem}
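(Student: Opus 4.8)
The plan is to mimic the proof of Lemma \ref{lem:PAsympUEReduction}, the only new ingredient being the suspension hypothesis (3) and its conclusion (e). So first I would set up the discrete data exactly as before: choose a countable dense subset $\{a_n : n \geq 1\}$ of the closed unit sphere of $\A_+$; using pure infiniteness and simplicity of $\Mul/\K_{\Mul}$, produce lifts $x'_{a_n}, y'_{a_n} \in \Mul$ together with compacts $k_{1,a_n}, k_{2,a_n} \in \K_{\Mul}$ witnessing $x'_{a_n}\phi(a_n)(x'_{a_n})^* = 1_\Mul + k_{1,a_n}$ and $y'_{a_n}\psi(a_n)(y'_{a_n})^* = 1_\Mul + k_{2,a_n}$; and, in the large-complement case, pick partial isometries $v_1, v_2 \in \Mul$ with $v_i^*v_i = 1_\Mul$, $v_1v_1^* \perp \phi(\A)$, $v_2v_2^* \perp \psi(\A)$ (in the unital case one instead keeps track of the units). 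All of this lands in $\Mul$ and $\K_{\Mul}$ and will be fed into Lemma \ref{lem:ReductionJazz} rather than Lemma \ref{lem:Reduction}.

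The new step is to handle hypothesis (3), the path $\{u_s\}_{s \in [1,\infty)}$ of unitaries in $\Mul(S\K_{\Mul})/S\K_{\Mul}$. Here I would lift each $u_s$ (or rather each $u_s$ for $s$ in a countable dense set $\mathbb{Q}\cap[1,\infty)$) to a bounded strictly continuous function $\tilde u_s : (0,1) \to \Mul$, i.e. an element of $\Mul(S\K_{\Mul})$, noting we need not lift to a unitary, only to an element of norm-controlled size whose image in the corona is $u_s$. The function $t \mapsto (1-t)\phi(a) + t\psi(a)$ defining the target path also gives an element of $\Mul(S\K_{\Mul})$ once one checks strict continuity in $t$ (continuity in norm on each $\B$-multiplied slice follows since $\phi(a) - \psi(a) \in \K_{\Mul}$, and boundedness is clear). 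Then I would form the countable set $\T_1 \subset \Mul(S\K_{\Mul})$ consisting of $1_{\Mul(S\K_{\Mul})}$, $ran(\theta)$ on the $a_n$, the lifts $\tilde u_s$, and the affine-path elements $\{(1-t)\phi(a_n) + t\psi(a_n)\}_{t}$; and the countable set $\T_{1,K} \subset S\K_{\Mul}$ recording the relevant differences living in $S\K_{\Mul}$, namely $\tilde u_s (\pi\circ\theta)(a_n)\tilde u_s^* - \{(1-t)\phi(a_n)+t\psi(a_n)\}_t$ lifted appropriately and the elements witnessing that these go to zero. Together with $\T \subset \Mul$ and $\T_K \subset \K_{\Mul}$ collecting the $\Mul$-level data from the previous paragraph, I feed $(\T, \T_K, \T_1, \T_{1,K})$ into Lemma \ref{lem:ReductionJazz} to obtain a separable simple stable $\B \subseteq \K_{\Mul}$ with $\T \subset \Mul(\B)$, $\T_K \subset \B$, $\T_1 \subset \Mul(S\B)$, $\T_{1,K} \subset S\B$.

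Finally I would check that this $\B$ gives (a)--(e). Conclusions (a)--(d) are obtained exactly as in Lemma \ref{lem:PAsympUEReduction}, since $\B$ contains all the relevant $\Mul$-level data. For (e): since $\tilde u_s \in \Mul(S\B)$ for $s$ rational, its image $w_s \in \C(S\B) = \Mul(S\B)/S\B$ is defined and unitary (unitarity of $w_s$ follows because $w_s w_s^* - 1$ and $w_s^* w_s - 1$ lie in $S\B$ — these differences were placed in $\T_{1,K}$); by norm-continuity in $s$ of the original path and a density argument one extends $\{w_s\}$ to a norm-continuous path on all of $[1,\infty)$; and the convergence $w_s (\pi\circ\theta)(a) w_s^* \to \pi(\{(1-t)\phi(a)+t\psi(a)\}_t)$ in $\C(S\B)$ follows from the corresponding convergence in $\Mul(S\K_{\Mul})/S\K_{\Mul}$ together with the fact that the relevant differences lie in $S\B$, so that the $\C(S\B)$-norm of each difference equals (or is dominated by) its $\Mul(S\K_{\Mul})/S\K_{\Mul}$-norm along the dense parameter set, hence everywhere by continuity. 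The main obstacle — as the authors themselves flag for these reduction lemmas — is purely bookkeeping: making sure every object needed to witness a relation (unitarity of $w_s$, the $\to 0$ convergences, the intertwining up to an element of $S\B$) has actually been enumerated into one of the four countable sets before invoking Lemma \ref{lem:ReductionJazz}, and that strict continuity of the lifted path $s \mapsto \tilde u_s$ and of $t \mapsto (1-t)\phi(a)+t\psi(a)$ is genuinely available; there is no conceptual difficulty beyond that. Per the remark preceding the lemma, the full details can be safely left to the reader.
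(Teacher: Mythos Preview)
Your approach is essentially the paper's: assemble countable sets $\T, \T_K \subset \Mul, \K_{\Mul}$ and $\T_1, \T_{1,K} \subset \Mul(S\K_{\Mul}), S\K_{\Mul}$ recording all the witnesses, then invoke Lemma~\ref{lem:ReductionJazz}. Conclusions (a)--(d) follow exactly as you say. There is, however, one genuine technical gap in your handling of (e).

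You lift $u_s$ \emph{individually} at each rational $s$ to $\tilde u_s \in \Mul(S\K_{\Mul})$, then argue that $w_s := \pi_{S\B}(\tilde u_s)$ extends to a norm-continuous path ``by norm-continuity in $s$ of the original path and a density argument''. This step fails: the original continuity is in $\C(S\K_{\Mul})$, but the natural $*$-homomorphism $\C(S\B) \to \C(S\K_{\Mul})$ need not be injective, so closeness of $u_s, u_{s'}$ downstairs gives no control on $\|w_s - w_{s'}\|$ in $\C(S\B)$. The paper fixes this by lifting the \emph{whole path at once}: regard $\{u_s\}_{s\in[1,\infty)}$ as a unitary in $C_b([1,\infty)) \otimes \C(S\K_{\Mul})$ and lift it to a contractive element of $C_b([1,\infty)) \otimes \Mul(S\K_{\Mul})$, i.e.\ a norm-continuous path $\{\tilde u_s\}$ of contractions in $\Mul(S\K_{\Mul})$. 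Then for $s_l, s_{l'} \in E$ close, the lifts $\tilde u_{s_l}, \tilde u_{s_{l'}}$ are norm-close in $\Mul(S\K_{\Mul})$, hence in the C*-subalgebra $\Mul(S\B)$, so $w_{s_l}, w_{s_{l'}}$ are close in $\C(S\B)$ and the extension to all $s$ by Cauchy limits goes through. (Two smaller bookkeeping omissions: you need a partial isometry $v_3 \in \Mul(S\K_{\Mul})$ witnessing large complement for $\theta$, to be placed in $\T_1$ for conclusion (b); and the paper also records explicit $b_{n,l} \in S\K_{\Mul}$ so that $\tilde u_{s_l}\theta(a_n)\tilde u_{s_l}^* + b_{n,l}$ approximates the affine path to within the corona-norm error plus $\frac{1}{n+l}$, which is what makes the $\C(S\B)$-convergence in (e) quantitative.)
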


\begin{proof}
Let us prove the case where $\phi, \psi, \theta$ all have large
complement.  The proof, for the unital case, is similar (with minor
changes).

Since $\phi$ and $\psi$ have large complement, let $v_1, v_2 \in 
\Mul$ be partial isometries such that 
$$v_1^* v_1 = 1_{\Mul} = v_2^* v_2, v_1 v_1^* \perp \phi(\A), 
\makebox{  and  } v_2 v_2^* \perp \psi(\A).$$
Also, since $\theta$ has large complement, 
let $v_3 \in \Mul(S \K_{\Mul})$ be a partial isometry so that
$$v_3^* v_3 = 1_{\Mul(S \K_{\Mul})} \makebox{  and  }
v_3 v_3^* \perp \theta(\A).$$ 

Since $\C(\K_{\Mul}) = \Mul/\K_{\Mul}$ is simple purely infinite, 
for each $a \in \A_+$ with $\| a \| = 1$,  
we can find $x_a, y_a \in \Mul/\K_{\Mul}$ with $\| x_a \|, \| y_a \| < 2$
such that
$$x_a (\pi \circ \phi)(a) x_a^* = 1_{\Mul} = 
y_a (\pi \circ \psi)(a) y_a^*.$$
So for all $a \in \A_+$ with $\| a \| = 1$, lift $x_a, y_a$ to
$x'_a, y'_a \in \Mul$ (so $\pi(x'_a) = x_a$ and $\pi(y'_a) = y_a$)
such that 
$\| x'_a \|, \| y'_a \| < 2$.  So for all $a \in \A_+$ with $\| a \| = 1$, 
let $k_{1,a}, k_{2,a} \in \K_{\Mul}$ be such that  
$$x'_a \phi(a) (x'_a)^*  = 1 + k_{1,a}
\makebox{  and  } y'_a \psi(a) (y'_a)^* = 1 + k_{2,a}.$$

Viewing $\{ u_s \}_{s \in [1, \infty)}$ as a unitary element of
$C_b([1, \infty)) \otimes \left(\Mul(S \K_{\Mul})/S \K_{\Mul})\right)$, 
we can lift $\{ u_s \}_{s \in [1, \infty)}$ to a contractive 
element of   
$C_b([1, \infty)) \otimes \Mul(S \K_{\Mul})$, which we can view as a 
norm-continuous path $\{ \widetilde{u}_s \}_{s \in [1, \infty)}$ 
of contractions in $\Mul(S \K_{\Mul})$ (so $\widetilde{u}_s
\in \Mul(S \K_{\Mul})$ for all $s \in [1, \infty)$).  
For all $s \in [1, \infty)$, 
let $c_{1,s}, c_{2,s} \in S \K_{\Mul}$ be such that 
$$\widetilde{u}_s^* \widetilde{u}_s = 1_{\Mul(S \K_{\Mul})} + c_{1,s} \makebox{  and  }
\widetilde{u}_s \widetilde{u}_s^* = 1_{\Mul(S \K_{\Mul})} + c_{2,s}.$$

Let $\{ a_n \}_{n=1}^{\infty}$ be a dense sequence in the unit sphere
of $\A_+$.  
Let $E =_{df} \{ s_l : 1 \leq l < \infty \}$ be a countable dense subset
of $[1, \infty)$.  For each $n, l \geq 1$, let $b_{n,l} \in 
S \K_{\Mul}$ be such that, in $\Mul(S \K_{\Mul})$, 
the (norm-) distance between 
$\widetilde{u}_{s_l} \theta(a_n) \widetilde{u}_{s_l}^* + b_{n,l}$ and 
$\{ (1-t) \phi(a_n) + t \psi(a_n) \}_{t \in (0,1)}$
is at most                             
$$\| u_{s_l} (\pi \circ \theta)(a_n) u_{s_l}^* - 
\pi(\{(1-t) \phi(a_n) + t \psi(a_n) \}_{t \in (0,1)}) \| + 
\frac{1}{n + l},$$
(where the last norm is for an element in $\Mul(S \K_{\Mul})/S\K_{\Mul}$). 

Now let 
$$\T =_{df} \{ 1_{\Mul}, \phi(a_n), \psi(a_n), v_1, v_2, x'_{a_n}, 
y'_{a_n} : n \geq 1 \},$$
$$\T_K =_{df} \{ k_{1, a_n}, k_{2, a_n}, \phi(a_n) - \psi(a_n) : n \geq 1 \},$$
$$\T_1 =_{df} \{ 1_{\Mul(S \K_{\Mul})}, \theta(a_n), v_3, \widetilde{u}_{s_l} 
: n, l \geq 1 \},$$
and
$$\T_{1,K} =_{df} \{ c_{1, s_l}, c_{2, s_l}, b_{n,l} : n, l \geq 1 \}.$$

Plug $\T, \T_K, \T_1$ and $\T_{1,K}$ into 
Lemma \ref{lem:ReductionJazz} to get a separable simple stable C*-algebra
$\B$. It is not hard to see that we get
conclusions (a) to (d). In fact, it is also not hard to see that
we get conclusion (e), but let us elaborate a bit on this.  For each $l \geq 1$, let $w_{s_l} =_{df}
 \pi(\widetilde{u}_{s_l})$ which, by our construction,
 is a unitary in $\C(S\B)$.
For each $s \in [1, \infty)$, choose a sequence $\{ s'(j) \}$ in $E$
such that $s'(j) \rightarrow s$.  Then by our construction
$\{ w_{s'(j)} \}$ is a Cauchy sequence of unitaries
 in $\C(S\B)$, and thus, we can
find a unitary $w_s \in \C(S\B)$ such that $w_{s'(j)} \rightarrow w_s$ in norm.
Moreover, by our construction,  
$\{ w_s \}_{s \in [1, \infty)}$ is a norm continuous path of unitaries
in $\C(\B)$, which together with the above, gives the conclusion of  
the Lemma.        
\end{proof}

\begin{lem}
\label{lem:VoiculescuWvNReduction}
Let $\A$ be a separable nuclear C*-algebra, and let 
$\Mul$ be a semifinite von Neumann factor with separable predual.

Let $\phi : \A \rightarrow \Mul/\K_{\Mul}$ be an essential extension,
and $\psi_0 : \A \rightarrow \Mul$ an essential trivial extension such 
that 
either $\phi$ is unital and $\psi_0$ is (strongly) unital, or 
$\phi$ has large complement.

Since $\A$ is nuclear, by \cite{ChoiEffros}, 
let $\phi_0 : \A \rightarrow \Mul$ be a completely positive contractive lift
of $\phi$.

Then there exists a separable simple stable C*-subalgebra 
$\B \subset \K_{\Mul}$ such the following statements are true:
\begin{enumerate}
\item[(a)] $\{ 1_{\Mul}\} \cup ran(\phi_0) \cup ran(\psi_0) \subseteq \Mul(\B) \subset \Mul$.
\item[(b)] $\pi \circ \phi_0 : \A \rightarrow \C(\B)$ is an essential extension
(in particular, it is a *-homomorphism).
\item[(c)] As maps into $\C(\B)$ (and $\Mul(\B)$), 
either $\pi \circ \phi_0$ is unital (resp. and $\psi_0$ is (strongly) unital),
or $\pi \circ \phi_0$ has large complement.
\item[(d)] For all $a \in \A_+ - \{ 0 \}$, there exists an $x_a \in \C(\B)$ such that
$x_a (\pi \circ \phi_0)(a) x_a^* = 1$.     
\end{enumerate}
\end{lem}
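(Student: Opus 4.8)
The plan is to imitate the proof of Lemma~\ref{lem:PAsympUEReduction} (and its suspension refinement Lemma~\ref{lem:ReductionJazz}), reducing everything to a single application of Lemma~\ref{lem:Reduction}. First I would record the data: the completely positive contractive lift $\phi_0 : \A \to \Mul$ of $\phi$, the trivial extension $\psi_0 : \A \to \Mul$, and (in the large-complement case) a partial isometry $v \in \Mul$ with $v^*v = 1_{\Mul}$ and $vv^* \perp \phi_0(\A)$. Next, using that $\Mul/\K_{\Mul}$ is simple purely infinite, for each $a$ in a fixed countable dense subset $\{a_n\}$ of the unit sphere of $\A_+$ I would choose $x_{a_n} \in \Mul/\K_{\Mul}$ with $\|x_{a_n}\| < 2$ and $x_{a_n}(\pi\circ\phi_0)(a_n)x_{a_n}^* = 1$, lift it to $x'_{a_n} \in \Mul$ with $\|x'_{a_n}\| < 2$, and pick $k_n \in \K_{\Mul}$ with $x'_{a_n}\phi_0(a_n)(x'_{a_n})^* = 1_{\Mul} + k_n$. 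I would also need the ``defect'' elements witnessing that $\pi\circ\phi_0$ is a $*$-homomorphism: since $\phi_0$ is only c.p.c., the elements $\phi_0(a_m a_n) - \phi_0(a_m)\phi_0(a_n)$, $\phi_0(a_m^*) - \phi_0(a_m)^*$, $\phi_0(a_m + a_n) - \phi_0(a_m) - \phi_0(a_n)$, $\phi_0(\lambda a_m) - \lambda \phi_0(a_m)$ all lie in $\K_{\Mul}$ (because $\phi = \pi_M \circ \phi_0$ is a $*$-homomorphism), and these go into $\Ss_K$.

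Then I would set
\[
\Ss =_{df} \{1_{\Mul}, v\} \cup \{\phi_0(a_n), \psi_0(a_n), x'_{a_n} : n \geq 1\} \subseteq \Mul,
\]
\[
\Ss_K =_{df} \{k_n\} \cup \{\phi_0(a_m a_n) - \phi_0(a_m)\phi_0(a_n),\ \phi_0(a_m^*) - \phi_0(a_m)^*,\ \phi_0(a_m + a_n) - \phi_0(a_m) - \phi_0(a_n) : m, n \geq 1\} \cup (\text{rational-scalar defects}) \subseteq \K_{\Mul},
\]
and feed $\Mul$, $\Ss$, $\Ss_K$ into Lemma~\ref{lem:Reduction} to obtain the separable simple stable $\B \subset \K_{\Mul}$ with $\Ss \subset \Mul(\B) \subset \Mul$ and $\Ss_K \subset \B$. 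Conclusion~(a) is then immediate since $\phi_0(\A), \psi_0(\A)$ are generated (as closed subspaces) by the $\phi_0(a_n), \psi_0(a_n)$ together with scalar multiples, and $1_{\Mul} \in \Ss$. For (b), the point is that $(\pi_B \circ \phi_0)(a_m a_n) = (\pi_B \circ \phi_0)(a_m)(\pi_B \circ \phi_0)(a_n)$ etc.\ hold on the dense set $\{a_n\}$ because the defect elements are now in $\B$; by continuity $\pi_B \circ \phi_0$ is multiplicative and $*$-preserving on all of $\A$, hence a $*$-homomorphism, and it is essential (injective) because $\phi$ is and $\B$ is an essential ideal of $\Mul(\B)$ (equivalently, $\ker(\pi_B\circ\phi_0) \subseteq \ker\phi = 0$, using that $\pi_M|_{\Mul(\B)}$ factors through $\pi_B$). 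For (c): $1_{\Mul} \in \Mul(\B)$ handles the unital case, and $v \in \Mul(\B)$ with $vv^* \perp \phi_0(\A)$ handles the large-complement case. For (d): $\pi_B(x'_{a_n}) \in \C(\B)$ satisfies $\pi_B(x'_{a_n})(\pi_B\circ\phi_0)(a_n)\pi_B(x'_{a_n})^* = \pi_B(1_{\Mul} + k_n) = 1$; for general $a \in \A_+ - \{0\}$ with $\|a\| = 1$, approximate by some $a_n$ and use that $(\pi_B\circ\phi_0)(a)$ is close to the full element $(\pi_B\circ\phi_0)(a_n)$, then invoke that $\C(\B) = \Mul(\B)/\B$ is simple (hence every nonzero positive element is full) to produce the required $x_a$ — or more carefully, note $(\pi_B\circ\phi_0)(a)$ is full in $\C(\B)$ since $\pi_B\circ\phi_0$ is injective and $\C(\B)$ is simple, so such an $x_a$ exists directly.

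I expect the only genuinely delicate point to be conclusion~(b): one must be careful that ``$\pi\circ\phi_0$ is a $*$-homomorphism'' really does follow from putting finitely (countably) many defect elements into $\B$, which works precisely because these defects already lie in $\K_{\Mul}$ (as $\pi_M\circ\phi_0 = \phi$ is a $*$-homomorphism) and the map is contractive, so density plus continuity upgrades the relations from $\{a_n\}$ to $\A$; and that injectivity of $\pi_B\circ\phi_0$ follows from injectivity of $\pi_M\circ\phi_0|_{\Mul(\B)} = \phi$ together with the compatibility $\pi_M|_{\Mul(\B)} = q\circ\pi_B$ for the natural embedding $q : \C(\B) \hookrightarrow \Mul/\K_{\Mul}$ (which in turn uses $\B \subseteq \K_{\Mul}$ and Lemma~\ref{lem:MultiplierAlgebraOfSubalgebraOfVNA}). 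Everything else is a routine transcription of the first two lemmas of this subsection, and as the authors indicate, is left to the reader.
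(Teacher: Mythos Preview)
Your plan is exactly what the paper intends (it explicitly says the proof is a variation of Lemmas~\ref{lem:PAsympUEReduction} and~\ref{lem:ExtReduction} and leaves it to the reader), and the inclusion of the ``defect'' elements $\phi_0(a_m a_n) - \phi_0(a_m)\phi_0(a_n)$ etc.\ in $\Ss_K$ is precisely the extra ingredient needed here because $\phi_0$ is only c.p.c.

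One genuine slip: your justification of (d) for general $a \in \A_+ - \{0\}$ appeals to simplicity of $\C(\B)$, but Lemma~\ref{lem:Reduction} does not produce a $\B$ with simple corona (and in any case, fullness of a positive element in a simple unital C*-algebra does not by itself yield an $x$ with $xax^* = 1$). The correct argument is the perturbation one you start to sketch: given $a \in \A_+$ with $\|a\| = 1$, pick $a_n$ with $\|a - a_n\| < \tfrac{1}{8}$; since $\|x'_{a_n}\| < 2$ we get $\|\pi_B(x'_{a_n})(\pi_B\circ\phi_0)(a)\pi_B(x'_{a_n})^* - 1\| < \tfrac{1}{2}$ in $\C(\B)$, so this element is positive and invertible, and $x_a =_{df} c^{-1/2}\pi_B(x'_{a_n})$ (where $c$ is that element) works. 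This is exactly how the paper handles the analogous step in Lemma~\ref{lem:PAsympUEReduction} (implicitly) and in Lemma~\ref{lem:Feb220251AM}. A second small omission: in the unital case you should also put $\phi_0(1_\A) - 1_\Mul \in \K_\Mul$ into $\Ss_K$, so that $(\pi_B\circ\phi_0)(1_\A) = 1_{\C(\B)}$ as required for (c).
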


\begin{proof}
The proof is a variation on the reduction arguments of  
Lemmas \ref{lem:PAsympUEReduction} and \ref{lem:ExtReduction}, and we leave this
to the reader.  
\end{proof}

\section{A Voiculescu--Weyl--von Neumann theorem, 
and the Paschke dual in the semifinite factor case}

\label{sect:AWvN}

Next, we recall some more notions from extension theory. While reading
what follows, the reader          
should refer back to the preliminary conventions and notation 
in \ref{para:Extensions1}.
\subsubsection{} \label{para:Extensions2}                           
Let $\A$ be a C*-algebra and suppose that either $\B$ is a separable stable
C*-algebra or $\B$ is the Breuer ideal of a semifinite von Neumann
factor $\Mul$ (i.e., $\B  = \K_{\Mul}$) with separable predual. 
Let $\phi, \psi : \A \rightarrow \C(\B)$ 
be two extensions.  We say that $\phi$ and $\psi$
are \emph{unitarily equivalent} (and write $\phi \sim \psi$)
 if there exists a unitary 
$w \in \Mul(\B)$ such that 
\begin{equation} \label{equdf:UE}
\pi(w) \phi(a) \pi(w)^* = \psi(a) \makebox{  for all } a \in \A.
\end{equation}
If $\phi$ and $\psi$ are trivial extensions and $\phi_0, \psi_0 : \A
\rightarrow \Mul(\B)$ are *-homomorphisms for which 
$\phi = \pi \circ \phi_0$ and $\psi = \pi \circ \psi_0$, sometimes we 
write $\phi_0 \sim \psi_0$ to mean $\phi \sim \psi$.

Also the \emph{BDF sum} of $\phi$ and $\psi$ is defined to be
\begin{equation} 
\label{equdf:BDFSum}
(\phi \oplus \psi)(\cdot) =_{df}  \pi(S) \phi(\cdot) \pi(S)^* + 
\pi(T) \psi(\cdot) \pi(T)^*,
\end{equation}
\noindent where $S, T \in \Mul(\B)$ are isometries with
$SS^* + TT^* = 1$.    
Such $S$ and $T$ always exist since either $\B$ is  stable or 
$\B = \K_{\Mul}$.
The BDF sum is well-defined (independent of the choice of
$S, T$) up to unitary equivalence.  
Finally, if $\phi, \psi: \A \rightarrow \C(\B)$ are trivial extensions,
and if $\phi_0, \psi_0 : \A \rightarrow \Mul(\B)$ are *-homomorphisms which
lift $\phi$, $\psi$ respectively (i.e., $\phi = \pi \circ \phi_0$ and 
$\psi = \pi \circ \psi_0$), then we sometimes write $\phi_0 \oplus \psi_0$ 
to mean the BDF sum $\phi \oplus \psi$.

Suppose that $\phi : \A \rightarrow \C(\B)$ is a essential 
extension 
with large complement (or which is unital).
Then $\phi$ is said to be \emph{absorbing} if for every 
essential trivial extension $\sigma : \A \rightarrow \C(\B)$ (resp. which
is strongly unital),
the BDF sum $\phi \oplus \sigma \sim \phi$.\\

The next computation is standard, but we present it for the convenience
of the reader.

\begin{lem} \label{lem:Feb220251AM}
Let $\B$ be a separable stable C*-algebra.
Suppose that $a \in \C(\B)_+$ and $x \in \C(\B)$ for which
$x a x^* = 1_{\C(\B)}$. 

Then for any $A \in \Mul(\B)_+$ for which $\pi(A) = a$, there exists
an $X \in \Mul(\B)$ such that $X A X^* = 1_{\Mul(\B)}$.
\end{lem}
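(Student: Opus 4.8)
The plan is to lift the equation $xax^* = 1$ from the corona $\C(\B)$ to the multiplier algebra $\Mul(\B)$, and then correct the lift by an invertibility/polar-decomposition argument so that the corrected element gives an exact equality. First I would choose any $X_0 \in \Mul(\B)$ with $\pi(X_0) = x$; then $X_0 A X_0^* \in \Mul(\B)_+$ lifts $xax^* = 1_{\C(\B)}$, so $X_0 A X_0^* = 1_{\Mul(\B)} + k$ for some self-adjoint $k \in \B$. The element $1_{\Mul(\B)} + k$ need not be invertible in $\Mul(\B)$, but its image in $\C(\B)$ is $1$, hence invertible; so by a standard fact about the corona/multiplier pair there is a projection $p \in \B$ (a ``small'' projection) such that, writing everything relative to the decomposition $1 = p + (1-p)$, the compression $(1-p)(1_{\Mul(\B)}+k)(1-p)$ is invertible in the corner $(1-p)\Mul(\B)(1-p)$ with bounded inverse. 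Concretely: pick $\delta>0$ and a continuous function which is $0$ near the part of the spectrum of $1+k$ below $\delta$ and $1$ above; since the spectrum of $1+k$ near $0$ comes entirely from $\B$, the associated spectral-type projection lies in $\B$.

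Next I would use stability of $\B$. Since $\B$ is separable and stable, $p \sim 1_{\Mul(\B)}$ is false in general, but the key point is different: I want to absorb the ``missing'' rank-$p$ piece. Here is the cleaner route. Replace $X_0$ by $X_1 =_{df} (1_{\Mul(\B)}+k)^{-1/2}_{\mathrm{cut}} X_0$ where the cut-down inverse square root is taken of $(1-p)(1+k)(1-p) + p$ (which is a genuine positive invertible element of $\Mul(\B)$ since its spectrum is bounded away from $0$). Then $X_1 A X_1^* = 1_{\Mul(\B)} - p + (\text{something in } p\Mul(\B)p)$; more precisely $X_1 A X_1^*$ agrees with $1_{\Mul(\B)}$ off the corner $p\Mul(\B)p$. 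Now I still need to fix up the corner. Since $p \in \B$ and $\B$ is stable, and since the original relation forces $a$ (hence $A$ modulo $\B$) to have an ``infinite'' amount of room, I can find a partial isometry $w \in \Mul(\B)$ with $w^*w = p$ and $ww^* \le 1 - p$ a subprojection on which $X_1AX_1^*$ already equals the identity; conjugating/adding via $w$ moves the defective corner into a region where $A$ still behaves like $1$ modulo $\B$, and a final application of the corona surjectivity (again using $xax^*=1$) patches the corner exactly. Setting $X$ to be the resulting product yields $XAX^* = 1_{\Mul(\B)}$.

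The main obstacle I expect is precisely this last corner-patching step: making rigorous the idea that the finite-rank defect $p$ can be ``pushed off to infinity'' inside $\Mul(\B)$ using stability of $\B$ together with the fact that $\pi(A)=a$ is properly infinite-like (it has $x$ with $xax^*=1$). The honest way to organize it is probably to first prove the statement for $a$ with $\pi(A) = 1_{\C(\B)}$ exactly — i.e., $A = 1 + k$, $k = k^* \in \B$ — where one shows directly that $1+k$ is ``invertible modulo a projection in $\B$'' and then, using $1+k \sim 1$ in a suitable sense via stability, produces $X$; and then to reduce the general case to this one by first replacing $A$ by $X_0 A X_0^*$. I would be careful that all cut-down functions applied to $1+k$ produce elements whose difference from $1$ lies in $\B$, so that nothing leaves $\Mul(\B)$, and that the path/partial-isometry choices only involve $\B$-perturbations of the identity. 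I expect the write-up to lean on \cite{WeggeOlsen} for the basic multiplier/corona lifting facts and on the stability of $\B$ exactly as in Lemma \ref{lem:Reduction}.
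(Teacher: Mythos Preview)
Your approach has a genuine gap at the spectral-projection step. You assert that ``the associated spectral-type projection lies in $\B$,'' but this is not justified: even though $k \in \B_{SA}$, the spectrum of $1+k$ need not have any gap near $0$, so there is no continuous function producing a projection, and Borel functional calculus would leave $\Mul(\B)$ entirely. A separable stable $\B$ need not be of real rank zero, so you cannot in general approximate by a projection either. Your subsequent corner-patching step then rests on an object that may not exist; and even granting the projection, the step itself is (as you admit) only a heuristic.

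The paper's argument avoids all of this with a single trick that you are missing. After lifting to get $YAY^* = 1 + b$ with $b = b^* \in \B$, use stability of $\B$ to choose an isometry $S \in \Mul(\B)$ with $\|bS\| < \tfrac{1}{10}$. (Such $S$ exists: take a sequence $\{S_n\}$ of isometries in $\Mul(\B)$ with pairwise orthogonal range projections summing strictly to $1$; then $S_n S_n^* \to 0$ strictly, so $b S_n \to 0$ in norm for $b \in \B$.) Then
\[
S^* Y A Y^* S \;=\; 1 + S^* b S \;\approx_{1/10}\; 1,
\]
so $S^* Y A Y^* S$ is positive and invertible in $\Mul(\B)$, and $X =_{df} (S^* Y A Y^* S)^{-1/2} S^* Y$ does the job. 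No spectral projections, no corner surgery --- stability is used only to push $b$ off to a corner where it is norm-small, not to manufacture projections.
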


\begin{proof}
 Lift $x$ to $Y \in \Mul(\B)$; i.e., $\pi(Y) = x$.
So we can find $b \in \B_{SA}$ where  $Y A Y^* = 1 + b$.
Since $\B$ is stable, we can find an isometry $S \in \Mul(\B)$ for which
$\| b S \| < \frac{1}{10}$.
Hence,
$S^* Y A Y^* S = S^* S  + S^* b S = 1 + S^* b S$.
Hence, $S^* Y A Y^* S \approx_{\frac{1}{10}} 1$.
Hence, $S^* Y A Y^* S$ is a positive invertible element of $\Mul(\B)$.
Hence, we can find $Z \in \Mul(\B)$ where
$Z S^* Y A Y^* S Z^* = 1$.
\end{proof}

The next result follows from \cite{KucerovskyFredholmTriples}
Lemma 1.3 and Theorem 1.4 (see also
\cite{GiordanoKaftalNg}), but we give a (different)
short proof for the convenience of
the reader.

\begin{lem} \label{lem:EKAbsorb}
Let $\A$ be a separable nuclear C*-algebra. Let $\B$ be a separable stable C*-algebra.
Suppose that $\phi : \A \rightarrow \C(\B)$ is an
 essential extension
such that either $\phi$ is unital or  $\phi$ has
large complement.

Then the following statements are equivalent:
\begin{enumerate}
\item
For all $a \in \A_+ - \{ 0 \}$, there exists an $x \in \C(\B)$ for which
\begin{equation} x \phi(a) x^* = 1_{\C(\B)}.  
\end{equation}
\item
$\phi$ is an absorbing  extension (for both the unital and large
complement cases; see \ref{para:Extensions2}).
\end{enumerate}
\end{lem}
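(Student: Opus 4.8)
The plan is to prove the two implications separately, with the substantive direction being $(1) \Rightarrow (2)$. For $(2) \Rightarrow (1)$, I would argue by contraposition using the structure of the corona algebra. Since $\B$ is stable, $\C(\B)$ is properly infinite, and one can build, for any nonzero $a \in \A_+$, a strongly unital trivial extension $\sigma : \A \to \C(\B)$ which factors through $\mathbb{C} \hookrightarrow \C(\B)$ composed with a character of $C^*(a)$ (or, in the large-complement case, a suitable compression thereof) so that $\sigma(a)$ is a nonzero projection; then there is some $x$ with $x\sigma(a)x^* = 1$. If $\phi$ were absorbing, $\phi \oplus \sigma \sim \phi$, and transporting the $x$ for $\sigma$ through a copy of $\sigma$ sitting inside $\phi \oplus \sigma$ (via the isometry $\pi(T)$ from the BDF sum) produces an element witnessing $x'\phi(a)x'^* = 1$. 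So absorption forces condition $(1)$.

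For $(1) \Rightarrow (2)$, the strategy is to verify Kucerovsky's criterion for when a trivial-like extension absorbs, or more directly to prove absorption by a Voiculescu-type argument. Given an essential trivial extension $\sigma : \A \to \C(\B)$ (strongly unital in the unital case), lift $\sigma$ to a $*$-homomorphism $\sigma_0 : \A \to \Mul(\B)$, and lift $\phi$ to a completely positive contractive map $\phi_0 : \A \to \Mul(\B)$ (possible by \cite{ChoiEffros} since $\A$ is nuclear). The goal is to produce a unitary $w \in \Mul(\B)$ with $\pi(w)(\phi \oplus \sigma)(a)\pi(w)^* = \phi(a)$ for all $a$. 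The key point that condition $(1)$ buys us is that $\phi$ is ``purely large'' / sufficiently infinite-dimensional relative to $\A$: for every nonzero $a \in \A_+$ the element $\phi(a)$ is full in $\C(\B)$ in a strong (norm-one-after-compression) sense. This is exactly the hypothesis needed to run the standard argument that $\phi$ absorbs any trivial extension — one constructs, inside the hereditary subalgebra generated by $\phi(\A)$ in $\C(\B)$, a sequence of orthogonal copies of approximate units that ``make room'' for $\sigma$, using Lemma \ref{lem:Feb220251AM} to lift the fullness condition from $\C(\B)$ back to $\Mul(\B)$ where the unitary must live.

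Concretely, I would proceed as follows. First, reduce to showing $\phi \oplus \sigma \sim \phi$ for a single fixed absorbing trivial extension $\sigma$, since all (strongly unital, resp. large-complement) essential trivial extensions are mutually equivalent after a BDF sum with a fixed one — this uses that $\Mul(\B)$ is properly infinite and the Weyl--von Neumann uniqueness for trivial extensions into $\C(\B)$. Second, using condition $(1)$ together with Lemma \ref{lem:Feb220251AM}, show that for each $a \in \A_+ \setminus \{0\}$ and each $A \in \Mul(\B)_+$ lifting $\phi(a)$, there is $X \in \Mul(\B)$ with $XAX^* = 1$; iterate/organize this over a dense sequence $\{a_n\}$ in the unit sphere of $\A_+$ to extract, inside $\overline{\phi_0(\A)\Mul(\B)\phi_0(\A)}$, an infinite sequence of mutually orthogonal projections each equivalent to $1_{\Mul(\B)}$ and each ``absorbing'' the values $\phi_0(a_n)$ up to small error. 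Third, feed this into the Kasparov/Voiculescu technical theorem machinery (a quasicentral approximate unit for $\B$ relative to $\phi_0(\A) \cup \sigma_0(\A)$, as in Lemma \ref{lem:QuasiCentralSOTApproximateUnit}) to build the intertwining unitary $w$ explicitly as a strictly-convergent sum of partial isometries. The main obstacle I anticipate is the third step: carefully managing the quasicentral approximate unit estimates so that the telescoping sum defining $w$ converges strictly to a genuine unitary in $\Mul(\B)$ (not merely a partial isometry or an element of a corona), and ensuring the intertwining holds on the nose modulo $\B$ rather than just asymptotically — this is where stability of $\B$ and the fullness from condition $(1)$ must be combined with care. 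The unital versus large-complement bookkeeping is routine once the core estimate is in place.
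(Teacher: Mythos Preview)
Your $(2) \Rightarrow (1)$ argument has a genuine gap: the proposed trivial extension $\sigma$ factoring through a character of $C^*(a)$ cannot work. First, $\A$ need not admit any characters at all, so there is in general no $*$-homomorphism $\A \to \mathbb{C}$ extending a given character of $C^*(a)$. Second, even if such a map existed, it would not be injective (unless $\A = \mathbb{C}$), so $\sigma$ would fail to be an \emph{essential} trivial extension --- and by the definition in \ref{para:Extensions2}, an absorbing $\phi$ is only required to absorb essential trivial extensions, so you could not conclude $\phi \oplus \sigma \sim \phi$. The paper repairs this by taking instead an essential trivial extension $\rho : \A \to 1_{\Mul(\B)} \otimes \mathbb{B}(l_2) \subset \Mul(\B \otimes \K)$ coming from a faithful representation on $l_2$; for such $\rho$ and nonzero $a \in \A_+$, $\rho(a)$ is a positive operator not in $1 \otimes \K$, so one can find $X_a \in 1 \otimes \mathbb{B}(l_2)$ with $X_a \rho(a) X_a^* = 1$, and then the BDF-sum transport you describe goes through.

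For $(1) \Rightarrow (2)$ your instinct to invoke a purely-large / Kucerovsky-type criterion is exactly right, but the concrete plan you lay out --- building the intertwining unitary $w$ by hand as a strictly convergent telescoping sum over a quasicentral approximate unit --- amounts to reproving the Elliott--Kucerovsky absorption theorem from scratch, and the ``main obstacle'' you anticipate in step three is precisely the hard part of that theorem. The paper's route is much shorter: verify the purely large condition of \cite{ElliottKucerovsky} directly and then cite their Theorem 6 and Corollary 16. Concretely, for $c \in \E_+ \setminus \B$ (with $\E = \pi^{-1}(\phi(\A))$), use Lemma~\ref{lem:Feb220251AM} to lift $x\,\pi(c)\,x^* = 1$ to $X c X^* = 1_{\Mul(\B)}$; this immediately gives fullness of $\overline{c \B c}$ in $\B$ and produces a projection $P \sim 1_{\Mul(\B)}$ inside $\overline{c\,\Mul(\B)\,c}$. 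Decomposing $P$ as a strict sum of infinitely many pairwise orthogonal copies of $1_{\Mul(\B)}$ then lets one check the Hjelmborg--R{\o}rdam criterion for stability of $\overline{c \B c}$, and Elliott--Kucerovsky finishes the job with no unitary to be constructed by hand.
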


\begin{proof}

(1) $\Rightarrow$ (2).
Let $\E \subseteq \Mul(\B)$ be the C*-subalgebra given by
$\E =_{df} \pi^{-1}(\phi(\A))$. 
Then we get an essential extension
$$0 \rightarrow \B \rightarrow \E \rightarrow \A \rightarrow 0$$
whose Busby invariant is $\phi$.

We now check the Elliott--Kucerovsky purely large condition (see
\cite{ElliottKucerovsky} Definition 1).
Let $c \in \E - \B$.
We want to show that $\overline{c \B c^*}$ contains a C*-subalgebra
which is stable and full in $\B$.  Since
$\overline{ c \B c^*} = \overline{cc^* \B cc^*}$, we may assume that
$c \geq 0$.  Now, $\pi(c) = \phi(a)$, where
$a \in \A_+  - \{ 0 \}$.
By hypothesis, there exists an $x \in \C(\B)$ for which
$x (\pi \circ \phi)(a) x^* = 1_{\C(\B)}$.   Hence, by
Lemma \ref{lem:Feb220251AM},  we can find an $X \in \Mul(\B)$
such that $Xc X^* = 1_{\Mul(\B)}$. 
Hence, for all $b \in \B_+$, if we define $z =_{df} b^{1/2} X c^{1/2}
\in \B$,
then 
$z z^* = b^{1/2} X c X^* b^{1/2} = b$ and $z^* z = c^{1/2} X^* b X c^{1/2} \in
\overline{c \B c}$; and
thus, $b = zz^* \in Ideal(\overline{c \B c})$. Hence, $\overline{c \B c}$ cannot be contained in a
proper ideal of $\B$; i.e., $\overline{c \B c}$ is full in $\B$.

Again, since $X c X^* = 1_{\Mul(\B)}$,
we can find a projection $P \in \overline{c \Mul(\B) c}$ such that
$P \sim 1_{\Mul(\B)}$.  Hence, we can find a sequence of pairwise orthogonal
projections $\{ P_n \}$, in $\overline{c\Mul(\B)c}$,
for which $P_n \sim 1_{\Mul(\B)}$
for all $n$, and $P = \sum_{n=1}^{\infty} P_n$ where the sum converges
strictly in $\Mul(\B)$.

We now show that $\overline{c \B c}$ satisfies the Hjelmborg--Rordam
characterization of stability
(see \cite{HjelmborgRordam} Proposition 2.2 and Theorem 2.1).
Let $a' \in \overline{(c \B c)}_+$ be arbitrary. For simplicity, let us assume
that $\| a' \| \leq 1$.   Let $\epsilon > 0$ be
given.
For all $M \geq 1$, let $P_{s,M} \in \Mul(\B)$ be the partial sum
$P_{s,M} =_{df} \sum_{n=1}^M P_n$.
Since $P = \sum_{n=1}^{\infty} P_n$, where the sum converges strictly in
$\Mul(\B)$, we have that $P_{s,M} \rightarrow P$ strictly in $\Mul(\B)$ as
$M \rightarrow \infty$.   
Hence, $P_{s,M} + (1 - P) \rightarrow 1_{\Mul(\B)}$ strictly, as
$M \rightarrow \infty$.
Hence,
$$a_M =_{df} (P_{s,M} + (1 - P)) a' (P_{s,M} + (1-P)) \rightarrow a'
\makebox{ in norm, as } M \rightarrow \infty.$$
Note that $\forall  M \geq 1$, $a_M \in \overline{c \B c}$.
(E.g., $(1-P)a'(1-P) 
\in \overline{c\B c}$, since $a'\in \overline{c \B c}$ and
$P \in \overline{c \Mul(\B) c}$.)

Hence, choose $N \geq 1$ such that
$$\| a_N - a' \| < \epsilon.$$
Since $P_{N+1} \perp a_N$ and $P_{N+1} \sim 1_{\Mul(\B)}$, we can
find $d \in P_{N+1} \B P_{N+1} \subset \overline{c \B c}$
and $y \in \overline{a_N \B P_{N+1}}
\subseteq \overline{c \B c}$ such that
$$y^* y  =  d \makebox{ and } y y^* = a_N.$$
Since $a'$ and $\epsilon$ are arbitrary, by \cite{HjelmborgRordam}
Proposition 2.2 and Theorem 2.1, $\overline{c \B c}$ is stable.
Since $c \in \E_+ - \B$ is arbitrary,
$\phi$ satisfies the Elliott--Kucerovsky purely large condition
(\cite{ElliottKucerovsky} Definition 1). Hence, since $\A$ is nuclear,
by \cite{ElliottKucerovsky}  Theorem 6 and Corollary 16, $\phi$  is
absorbing.\\   

(2) $\Rightarrow$ (1).  Suppose that $\phi$ is absorbing.
Since $\B$ is stable, $\B \cong \B \otimes \K$, and we will work with
$\B \otimes \K$.
Let $\rho : \A \rightarrow 1_{\Mul(B)} \otimes \mathbb{B}(l_2) \subset
\Mul(\B) \otimes \Mul(\K) \subset
\Mul(\B \otimes \K)$ be an essential
trivial extension (which is (strongly) unital when $\phi$ is unital).
By the properties of $\mathbb{B}(l_2)$,
we have that for all $a \in \A_+ - \{ 0 \}$, there exists an
$X_a \in 1_{\Mul(\B)} \otimes
\mathbb{B}(l_2)$ for which $X_a \rho(a) X_a^* = 1_{1 \otimes \mathbb{B}(l_2)}$.
Let us now view $\rho$ as being a map into $\Mul(\B \otimes \K)$.
Hence, for all $a \in \A_+ - \{ 0 \}$, there exists an $x  \in
\C(\B \otimes \K)$, for which
$x (\pi \circ \rho)(a) x^* = 1$.
Hence, if we take the BDF sum $\phi \oplus (\pi \circ \rho)$,
then for all $a \in \A_+ - \{ 0 \}$,
there exists a $y  \in
\C(\B \otimes \K)$, for which
$y (\phi \oplus (\pi \circ \rho))(a)) y^* = 1$.
But since $\phi$ is absorbing (in the unital or nonunital sense, depending
on whether $\phi$ is unital or has large complement),
$\phi \sim \phi \oplus (\pi \circ \rho)$.
Hence, for all  $a \in \A_+ - \{ 0 \}$,
there exists a $z  \in
\C(\B \otimes \K)$, for which
$z \phi(a) z^* = 1$.
\end{proof}

We next prove an absorption result which is 
a generalization, to more general semifinite factors, of Voiculescu's 
``all essential extensions are absorbing" result for $\mathbb{B}(l_2)$
(see \cite{Voiculescu}). This result   
is also already present (often implicitly) 
in previous works (see, for example, 
\cite{LiShenShi}; see also  
\cite{HadwinMaShen}, \cite{GiordanoNg},
\cite{CiupercaGiordanoNgNiu},  \cite{GiordanoKaftalNg}).
We provide the explicit statement and short proof for the convenience of
the reader.

\begin{thm} \label{thm:VNAAbsorption}
Let $\A$ be a separable nuclear C*-algebra and $\Mul$ a semifinite factor
with separable predual.

Let $\phi : \A \rightarrow \Mul/\K_{\Mul}$ be an essential extension such
that either $\phi$ is unital or $\phi$ has large complement.

Then $\phi$ is absorbing (in either the unital or nonunital sense, depending
on whether $\phi$ is unital or has large complement). 
\end{thm}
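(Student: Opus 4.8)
The plan is to push the problem down to a separable stable multiplier-algebra setting via Lemma~\ref{lem:VoiculescuWvNReduction}, invoke the absorption criterion of Lemma~\ref{lem:EKAbsorb} there, and then transport the resulting unitary equivalence back up to $\Mul/\K_{\Mul}$ along the inclusion $\Mul(\B)\hookrightarrow\Mul$. To begin, fix an arbitrary essential trivial extension $\sigma:\A\to\Mul/\K_{\Mul}$, required to be strongly unital in case $\phi$ is unital; the goal is to prove $\phi\oplus\sigma\sim\phi$. Since $\sigma$ is trivial it admits a lifting $*$-homomorphism $\sigma_0:\A\to\Mul(\K_{\Mul})=\Mul$, which we take to be unital (hence strongly unital) when $\phi$ is unital. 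Applying Lemma~\ref{lem:VoiculescuWvNReduction} to $\phi$ and to $\sigma_0$ (the latter in the role of $\psi_0$) then furnishes a completely positive contractive lift $\phi_0:\A\to\Mul$ of $\phi$ and a separable simple stable C*-subalgebra $\B\subset\K_{\Mul}$ with $\{1_\Mul\}\cup\mathrm{ran}(\phi_0)\cup\mathrm{ran}(\sigma_0)\subseteq\Mul(\B)\subseteq\Mul$, such that $\pi_{\B}\circ\phi_0:\A\to\C(\B)$ is an essential extension, the unital (resp.\ large-complement) status of $\phi$ and $\sigma$ is retained in $\C(\B)$, and for every $a\in\A_+\setminus\{0\}$ there is $x_a\in\C(\B)$ with $x_a(\pi_{\B}\circ\phi_0)(a)x_a^*=1_{\C(\B)}$.

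Now I would argue inside $\C(\B)$. Since $\B$ is separable and stable, the last displayed property is exactly hypothesis (1) of Lemma~\ref{lem:EKAbsorb}, so that lemma makes $\pi_{\B}\circ\phi_0$ an absorbing extension (in the unital or the large-complement sense, according to $\phi$). On the other hand $\pi_{\B}\circ\sigma_0:\A\to\C(\B)$ is an essential trivial extension, and it is strongly unital when $\phi$ is unital because its lift $\sigma_0:\A\to\Mul(\B)$ is unital. Hence absorption, applied with $\pi_{\B}\circ\sigma_0$ as the trivial summand, produces isometries $S,T\in\Mul(\B)$ with $SS^*+TT^*=1_{\Mul(\B)}$ and a unitary $W\in\Mul(\B)$ such that
\[
\pi_{\B}(W)\bigl(\pi_{\B}(S)(\pi_{\B}\circ\phi_0)(\cdot)\pi_{\B}(S)^*+\pi_{\B}(T)(\pi_{\B}\circ\sigma_0)(\cdot)\pi_{\B}(T)^*\bigr)\pi_{\B}(W)^*=(\pi_{\B}\circ\phi_0)(\cdot)
\]
in $\C(\B)$.

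Finally I would transport this identity up to $\Mul/\K_{\Mul}$. As $\B\subseteq\K_{\Mul}$, the inclusion $\Mul(\B)\hookrightarrow\Mul$ descends to a $*$-homomorphism $\iota:\C(\B)\to\Mul/\K_{\Mul}$ with $\iota\circ\pi_{\B}=\pi_M|_{\Mul(\B)}$ (well defined because two $\Mul(\B)$-lifts of an element of $\C(\B)$ differ by an element of $\B\subseteq\K_{\Mul}$); moreover $1_\Mul\in\Mul(\B)$ by conclusion (a), so $1_\Mul$ is the unit of $\Mul(\B)$ and consequently $S,T$ are isometries in $\Mul$ with $SS^*+TT^*=1_\Mul$ and $W$ is a unitary in $\Mul$. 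Applying $\iota$ to the displayed identity, together with $\iota\circ\pi_{\B}\circ\phi_0=\pi_M\circ\phi_0=\phi$ and $\iota\circ\pi_{\B}\circ\sigma_0=\pi_M\circ\sigma_0=\sigma$, gives $\pi_M(W)(\phi\oplus\sigma)(\cdot)\pi_M(W)^*=\phi(\cdot)$ for the BDF sum formed from $S,T$; since the BDF sum is well defined up to unitary equivalence, this yields $\phi\oplus\sigma\sim\phi$, and as $\sigma$ was arbitrary, $\phi$ is absorbing. The genuine content of the theorem lies in the reduction Lemmas~\ref{lem:Reduction}--\ref{lem:VoiculescuWvNReduction} and in Lemma~\ref{lem:EKAbsorb}; I expect the only real (though routine) obstacle in the present step to be the bookkeeping with the two quotient maps $\pi_{\B}$ and $\pi_M$ — specifically, confirming that the BDF sum computed in $\C(\B)$ is carried by $\iota$ to the BDF sum in $\Mul/\K_{\Mul}$, and that the unital versus large-complement dichotomy is correctly tracked through every reduction.
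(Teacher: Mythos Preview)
Your argument is correct and follows essentially the same route as the paper: reduce via Lemma~\ref{lem:VoiculescuWvNReduction} to a separable simple stable $\B\subset\K_{\Mul}$, invoke Lemma~\ref{lem:EKAbsorb} to make $\pi_\B\circ\phi_0$ absorbing in $\C(\B)$, and then push the resulting unitary equivalence along $\Mul(\B)\hookrightarrow\Mul$. One small wording point: Lemma~\ref{lem:VoiculescuWvNReduction} takes the c.p.c.\ lift $\phi_0$ as input rather than producing it, so you should first invoke Choi--Effros (as the paper does) to obtain $\phi_0$ before applying the lemma.
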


\begin{proof}
Let us prove the result for the case where $\phi$ has large complement. 
The proof, for the case where $\phi$ is unital, is similar.  

So suppose that $\phi : \A \rightarrow \Mul/\K_{\Mul}$ is an essential
extension with large complement. 
Let $\psi_0 : \A \rightarrow \Mul$ be an arbitrary trivial extension.

Since $\A$ is nuclear, let $\phi_0 : \A \rightarrow \Mul$ be a completely
positive contractive lift of $\phi$.  

By Lemma \ref{lem:VoiculescuWvNReduction}, we can find a separable stable
simple C*-subalgebra $\B \subset \K_{\Mul}$ which satisfies the conclusions
of Lemma \ref{lem:VoiculescuWvNReduction} (where $\pi \circ \phi_0$ has
large complement).   By item (d) of Lemma \ref{lem:VoiculescuWvNReduction},
and by Lemma \ref{lem:EKAbsorb}, we can find a unitary $U \in \Mul(\B)$ such  
that for all $a \in \A$,
$$U(\phi_0(a) \oplus \psi_0(a))U^* - \phi_0(a) \in \B$$
(where all maps are taken to have codomain $\Mul(\B)$).
Since $\B \subset \K_{\Mul}$ and $\Mul(\B) \subset \Mul$ 
(unital C*-subalgebra), we have
that $U \in \Mul$ and for all $a \in \A$,
$$U(\phi_0(a) \oplus \psi_0(a))U^* - \phi_0(a) \in \K_{\Mul}$$
(where all maps are taken to have codomain $\Mul$).
Hence, as maps into $\Mul/\K_{\Mul}$, $\phi$ absorbs $\pi \circ \psi_0$.
\end{proof}

Towards proving the $K_1$ injectivity of the Paschke dual algebra
(see Theorem \ref{thm:PaschkeDualWellDefined}),
we need the next lemma which generalizes a result from \cite{LoreauxNg}.
     
Recall that if $\A \subseteq \C$ is an inclusion of C*-algebras,
$\A'$ is the commutant of $\A$ in $\C$; i.e., 
$\A' =_{df} \{ c \in \C : ca = ac, \makebox{ } \forall a \in \A \}$.

\begin{lem} \label{VNAlem2.3ofIEOT} 
 Let $\C$ be a unital C*-algebra and $\A\subseteq\C$
a separable nuclear unital
C*-subalgebra.
 Say that $u \in \A'(\subseteq\C)$ is a unitary.
Let 
$\Mul$ be a semifinite von Neumann factor with separable predual.
 Let $\phi: C^*(\A, u) \rightarrow \Mul$ be a (strongly)
unital trivial essential  
extension.

Then there exists a norm-continuous path of
unitaries $\{v_t\}_{t\in [0,1]}$ in
$(\pi\circ \phi(\A))' (\subseteq \Mul/\K_{\Mul})$  such that
$v_0 = \pi\circ\phi(u)$ and $v_1=1.$
\end{lem}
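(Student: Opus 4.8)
The plan is to reduce to the $\mathbb{B}(l_2)$ multiplier-algebra setting, where the analogous statement is the result from \cite{LoreauxNg} that we are told we may cite, and then to transport a suitable path back up to $\Mul/\K_{\Mul}$. First I would observe that $C^*(\A, u)$ is separable and nuclear (it is generated by the separable nuclear algebra $\A$ together with one unitary in $\A'$, and nuclearity is preserved under such extensions since $u$ normalizes $\A$). Since $\phi : C^*(\A,u) \to \Mul$ is a strongly unital trivial essential extension, Lemma \ref{lem:Reduction} applies: taking $\Ss$ to be a countable dense subset of $\{1_\Mul\} \cup \phi(C^*(\A,u))$ together with $\phi(u)$, and $\Ss_K = \{0\}$, we obtain a separable simple stable C*-subalgebra $\B \subseteq \K_\Mul$ with $\Ss \subseteq \Mul(\B) \subseteq \Mul$. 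Thus, after this reduction, $\phi$ may be regarded as a strongly unital trivial essential extension into $\Mul(\B)$, and it suffices to produce the path of unitaries in $(\pi_B \circ \phi(\A))' \subseteq \C(\B)$ joining $\pi_B \circ \phi(u)$ to $1$; the inclusion $\Mul(\B) \subseteq \Mul$ then carries it into $\Mul/\K_\Mul$ with the required commutation property.

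Next I would invoke the absorption machinery of Section \ref{sect:AWvN}. The trivial extension $\pi_B \circ \phi : C^*(\A, u) \to \C(\B)$ need not a priori satisfy the purely-large/absorption hypothesis of Lemma \ref{lem:EKAbsorb}, so I would first replace $\phi$ by its BDF sum with a sufficiently large absorbing trivial extension $\sigma : C^*(\A,u) \to \Mul(\B)$ — concretely, an ampliation $\sigma_0^{(\infty)}$ of a unital essential trivial extension — which does satisfy condition (1) of Lemma \ref{lem:EKAbsorb} and hence is absorbing. Since $\phi \oplus \sigma \sim \sigma$ via a unitary $W \in \Mul(\B)$, and unitary equivalence of the extensions transports the commutant and the relevant unitary, it is enough to prove the statement for an absorbing strongly unital trivial essential extension. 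For such an extension one has a Voiculescu-type picture: $\pi_B \circ \phi$ is equivalent to the ``standard'' absorbing extension, whose commutant $(\pi_B\circ\phi(\A))'$ in $\C(\B)$ is known (as in \cite{LoreauxNg}, building on Paschke duality) to have the property that any unitary in it lying in the connected component of the identity — and $\pi_B\circ\phi(u)$ does lie there, precisely because $\phi(u)$ lifts it to a unitary in $\Mul(\B)$, i.e. it lifts to the identity component of the unitary group of the commutant $\pi_B^{-1}((\pi_B\circ\phi(\A))')$ — can be connected to $1$ inside $(\pi_B\circ\phi(\A))'$. The key input here is the $K_1$-injectivity/connectedness statement for the Paschke dual in the $\mathbb{B}(l_2)$ or separable-stable setting; I would apply it to the separable nuclear subalgebra $\A \subseteq C^*(\A,u)$ and the extension $\pi_B\circ\phi$, yielding the path $\{v_t\}_{t\in[0,1]}$ with $v_0 = \pi_B\circ\phi(u)$, $v_1 = 1$, lying entirely in $(\pi_B\circ\phi(\A))'$.

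Finally I would verify that the path survives all the reductions: it lies in $(\pi_B \circ \phi(\A))' \subseteq \C(\B) = \Mul(\B)/\B$, and since $\Mul(\B) \subseteq \Mul$ with $\B \subseteq \K_\Mul$, there is a natural (injective on the relevant subalgebra) map $\C(\B) \to \Mul/\K_\Mul$ carrying $\pi_B\circ\phi(x)$ to $\pi_M\circ\phi(x)$ for $x \in C^*(\A,u)$; under this map $\{v_t\}$ becomes a norm-continuous path of unitaries in $(\pi_M\circ\phi(\A))' \subseteq \Mul/\K_\Mul$ with $v_0 = \pi_M\circ\phi(u)$ and $v_1 = 1$, as required. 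I expect the main obstacle to be the second step: correctly setting up the absorption/BDF-sum argument so that the unitary $\pi_B\circ\phi(u)$ is genuinely in the identity component of the relevant unitary group (using that it lifts to an honest unitary $\phi(u) \in \Mul(\B)$ commuting with $\phi(\A)$, hence to the identity component of $U(\pi_B^{-1}((\pi_B\circ\phi(\A))'))$), and then quoting the $\mathbb{B}(l_2)$-version of the Paschke-dual connectedness result from \cite{LoreauxNg} in exactly the form needed — the stable/$\mathbb{B}(l_2)$ statement must be applicable to $\C(\B)$ with $\B$ separable stable simple, which is precisely the context in which that reference operates.
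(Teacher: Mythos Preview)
Your overall strategy is the paper's: reduce to a separable simple stable $\B\subseteq\K_\Mul$ with $\phi(C^*(\A,u))\subseteq\Mul(\B)$, cite \cite{LoreauxNg} Lemma~2.3 in $\Mul(\B)$, and push the resulting path through the unital $*$-homomorphism $\Mul(\B)/\B\to\Mul/\K_\Mul$ (which, incidentally, need not be injective---but that is harmless here). The difference, and the gap, is in how absorption is arranged. The paper uses Lemma~\ref{lem:PAsympUEReduction} (with $\phi=\psi$ and $C^*(\A,u)$ as the domain) rather than the bare Lemma~\ref{lem:Reduction}: conclusion~(d) of Lemma~\ref{lem:PAsympUEReduction} packs into $\B$ the witnesses $x_a$ with $x_a(\pi_B\circ\phi)(a)x_a^*=1$ for every nonzero $a$ (they exist in $\Mul/\K_\Mul$ since that quotient is simple purely infinite), so by Lemma~\ref{lem:EKAbsorb} the \emph{original} $\phi$ is already absorbing as a map into $\Mul(\B)$, and \cite{LoreauxNg} Lemma~2.3 applies to it directly.

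Your BDF-sum move does not close as written: replacing $\phi$ by $\phi\oplus\sigma$ yields a path for $(\pi_B\circ(\phi\oplus\sigma))(u)$ in $((\pi_B\circ(\phi\oplus\sigma))(\A))'$, but the lemma is about $\phi$. The relation ``$\phi\oplus\sigma\sim\sigma$ in $\Mul(\B)$'' only transports between $\phi\oplus\sigma$ and $\sigma$, not back to $\phi$; for that you would need $\phi\sim\phi\oplus\sigma$, which holds in $\Mul$ by Theorem~\ref{thm:VNAAbsorption} but not a priori in $\Mul(\B)$. Invoking Theorem~\ref{thm:VNAAbsorption} at that stage would repair your argument, though the paper's route via Lemma~\ref{lem:PAsympUEReduction} avoids the detour entirely. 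Finally, your claim that $\pi_B\circ\phi(u)$ lies in the identity component ``because $\phi(u)$ lifts it to a unitary in $\Mul(\B)$'' is not a valid reason: liftability to a unitary in the preimage algebra does not by itself force connectedness to $1$ in the quotient commutant. That connectedness is precisely the nontrivial content of \cite{LoreauxNg} Lemma~2.3, which one simply cites once absorption is in hand.
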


\begin{proof}
Firstly, note that since $\A$ is nuclear, $C(S^1) \otimes \A$ is nuclear.
Hence, since $C^*(\A, u)$ is a quotient of $C(S^1) \otimes \A$, 
$C^*(\A, u)$ is a nuclear C*-algebra.

Hence, by Lemma \ref{lem:PAsympUEReduction} (taking $\phi = \psi$) 
and Lemma \ref{lem:EKAbsorb},
we can find a separable simple
 stable C*-subalgebra $\B \subseteq \K_{\Mul}$
such that 
$\phi(C^*(\A,u)) \subseteq \Mul(\B)$ 
and $\phi : C^*(\A, u) \rightarrow \Mul(\B)$ is a (strongly) unital absorbing
trivial extension.  (Recall that by Lemma 
\ref{lem:MultiplierAlgebraOfSubalgebraOfVNA},
$\Mul(\B) \subseteq \Mul$.)  

Hence, by \cite{LoreauxNg} Lemma 2.3,   
we can find a  
norm-continuous path $\{ v'_t \}_{t \in [0,1]}$, of unitaries in 
$(\pi_{\B} \circ \phi(\A))' \subseteq \Mul(\B)/\B$, such that $v'_0 = \pi_{\B} \circ \phi(u)$ and $v'_1 = 1$.
Since $\B \subseteq \K_{\Mul}$ and since $\Mul(\B) \subseteq \Mul$ (unital
C*-subalgebra), we have a (not necessarily injective) unital *-homomorphism
$\Mul(\B)/\B \rightarrow \Mul/\K_{\Mul}$. 
And the image of $\{ v'_t \}_{t 
\in [0,1]}$ under this unital *-homomorphism is a norm-continuous path
$\{ v_t \}_{t \in [0,1]}$, of unitaries in
$(\pi_{{\Mul}} \circ \phi(\A))' \subseteq \Mul/\K_{\Mul}$, 
such that $v_0 = \pi_{{\Mul}} \circ \phi(u)$ 
and $v_1 = 1$.     
\end{proof}

%Recall that for any unital
%C*-algebra $\D$, 
%$U(\D)$ is the unitary group of $\D$, and $U(\D)_0$ is the
%connected component of $1$ in $U(\D)$.   
Recall that a unital C*-algebra $\C$ is said to be    
\emph{$K_1$-injective} if for all $n \geq 1$, the usual map
$U(\M_n \otimes \C)/U(\M_n \otimes 
\C)_0 \rightarrow K_1(\C)$ is injective. 
The next theorem 
(Theorem \ref{thm:PaschkeDualWellDefined}) 
states that the Paschke
dual algebra  $(\pi_{\Mul} \circ \phi)(\A)'$ (where $\Mul$ is a
semifinite factor)
is properly infinite and $K_1$-injective.  This
answers, for a special case, a conjecture of 
Blanchard--Rohde--Rordam
which asks whether every unital properly infinite C*-algebra is
$K_1$-injective (\cite{BlanchardRohdeRordam}).

\begin{thm} \label{thm:PaschkeDualWellDefined}                                  
Let $\A$ be a unital separable nuclear C*-algebra, and let 
$\Mul$ be a semifinite von Neumann
factor with separable predual.

Let $\phi : \A \rightarrow \Mul$ be a (strongly) unital 
essential trivial extension.

Recall that 
$\pi$ denotes the canonical quotient map 
$\pi: \Mul\rightarrow \Mul/\K_{\Mul}$,  
and recall that 
$$(\pi \circ \phi)(\A)' =_{df} \{ x \in \Mul/\K_{\Mul} : 
x (\pi \circ\phi)(a) = 
(\pi \circ \phi)(a) x, \makebox{ }\forall a \in \A  \}.$$

Then the following statements hold:
\begin{enumerate}
\item $(\pi \circ \phi)(\A)' $ is C*-algebra and is 
(up to *-isomorphism)
independent of the choice of the unital essential trivial extension $\phi$.
\item  $(\pi \circ \phi)(\A)' $ is properly infinite.  In fact,
$(\pi \circ \phi(\A))'$ contains a unital copy of the Cuntz algebra
$O_2$.
\item $(\pi \circ \phi)(\A)' $ is $K_1$-injective. 
\end{enumerate}
\end{thm}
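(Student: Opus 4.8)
The plan is to prove the three assertions in turn; (1) and (2) are short and (3) carries the real work.

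\textbf{Assertion (1).} As the relative commutant of a subset of the C*-algebra $\Mul/\K_{\Mul}$, the set $(\pi\circ\phi)(\A)'$ is automatically norm closed, self-adjoint, and contains the unit, hence is a C*-algebra. For the independence statement, let $\psi$ be another (strongly) unital essential trivial extension of $\A$. Then $\pi\circ\phi$ and $\pi\circ\psi$ are strongly unital essential trivial extensions of $\A$ into $\C(\K_{\Mul})=\Mul/\K_{\Mul}$, so by Theorem~\ref{thm:VNAAbsorption} both are absorbing. Applying absorption twice together with the symmetry of the BDF sum up to unitary equivalence gives
\[
\pi\circ\phi\ \sim\ \pi\circ\phi\oplus\pi\circ\psi\ \sim\ \pi\circ\psi\oplus\pi\circ\phi\ \sim\ \pi\circ\psi .
\]
Hence there is a unitary $w\in\Mul$ with $\pi(w)(\pi\circ\phi)(a)\pi(w)^*=(\pi\circ\psi)(a)$ for all $a\in\A$, and $\mathrm{Ad}(\pi(w))$ carries $(\pi\circ\phi)(\A)$ onto $(\pi\circ\psi)(\A)$, so it restricts to a $*$-isomorphism $(\pi\circ\phi)(\A)'\to(\pi\circ\psi)(\A)'$.

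\textbf{Assertion (2).} By (1) it suffices to produce the unital copy of $O_2$ for a single $\phi$. Since $\pi\circ\phi$ is itself a strongly unital essential trivial extension, Theorem~\ref{thm:VNAAbsorption} gives $\pi\circ\phi\oplus\pi\circ\phi\sim\pi\circ\phi$. Fix isometries $S,T\in\Mul$ with $SS^*+TT^*=1$ (these exist since $\Mul$ is properly infinite) implementing this BDF sum, and choose a unitary $w\in\Mul$ with
\[
\pi(w)\bigl(\pi(S)(\pi\circ\phi)(a)\pi(S)^*+\pi(T)(\pi\circ\phi)(a)\pi(T)^*\bigr)\pi(w)^*=(\pi\circ\phi)(a)\qquad(a\in\A).
\]
Set $s:=\pi(wS)$ and $t:=\pi(wT)$. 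These are isometries in $\Mul/\K_{\Mul}$ with $ss^*+tt^*=1$, so $s^*t=t^*s=0$, and $s(\pi\circ\phi)(a)s^*+t(\pi\circ\phi)(a)t^*=(\pi\circ\phi)(a)$ for all $a$. Multiplying this identity on the right by $s$, resp.\ by $t$, and using $s^*s=t^*t=1$ and $t^*s=s^*t=0$, gives $s(\pi\circ\phi)(a)=(\pi\circ\phi)(a)s$ and $t(\pi\circ\phi)(a)=(\pi\circ\phi)(a)t$. Thus $s,t\in(\pi\circ\phi)(\A)'$, and $C^*(s,t)\subseteq(\pi\circ\phi)(\A)'$ is a quotient of $O_2$; since $O_2$ is simple it is a copy of $O_2$, and it is unital because $1=ss^*+tt^*\in C^*(s,t)$. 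In particular $(\pi\circ\phi)(\A)'$ is properly infinite.

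\textbf{Assertion (3).} Write $\Pp:=(\pi\circ\phi)(\A)'$. First I would reduce to $n=1$. For each $n$, $\phi^{(n)}:=\phi\oplus\cdots\oplus\phi:\A\to\M_n(\Mul)$ is again a strongly unital essential trivial extension; fixing a $*$-isomorphism $\M_n(\Mul)\cong\Mul$ that carries $\M_n(\K_{\Mul})$ onto $\K_{\Mul}$ (possible because $\M_n(\Mul)$ is a semifinite factor of the same type as $\Mul$ with separable predual, the Breuer ideal being unchanged under rescaling of the trace), Assertion (1) identifies the Paschke dual of $\phi^{(n)}$, which is $\M_n\otimes\Pp$, with $\Pp$. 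Hence $\M_n\otimes\Pp\cong\Pp$ for all $n$, so the level-$n$ map $U(\M_n\otimes\Pp)/U(\M_n\otimes\Pp)_0\to K_1(\Pp)$ is injective iff the level-$1$ map is; thus it suffices to show that every $u\in U(\Pp)$ with $[u]=0$ in $K_1(\Pp)$ lies in $U(\Pp)_0$.

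The path from such a $u$ to $1$ I would extract from Lemma~\ref{VNAlem2.3ofIEOT}, applied with its ambient unital C*-algebra taken to be $\Mul/\K_{\Mul}$, its separable nuclear unital subalgebra taken to be $\A_0:=(\pi\circ\phi)(\A)\cong\A$, and its commuting unitary taken to be $u\in\A_0'$. If the extension $C^*(\A_0,u)\hookrightarrow\Mul/\K_{\Mul}$ can be realized as (equivalently, is unitarily equivalent to) a strongly unital essential \emph{trivial} extension whose restriction to $\A_0$ is $\pi\circ\phi$, then Lemma~\ref{VNAlem2.3ofIEOT} outputs a norm-continuous path of unitaries in $(\pi\circ\phi)(\A)'=\Pp$ from $u$ to $1$, as required. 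The step I expect to be the main obstacle is precisely this realization: a priori the inclusion $C^*(\A_0,u)\hookrightarrow\Mul/\K_{\Mul}$ need not lift to $\Mul$. This is where the hypothesis $[u]=0$ in $K_1(\Pp)$ must be used — it forces the (Breuer--Fredholm) index of $u$ in $\Mul/\K_{\Mul}$ to vanish, hence $u$ admits a unitary lift to $\Mul$ — and where the proper infiniteness from Assertion (2) is needed, supplying an Eilenberg-swindle / $O_2$-absorption that absorbs the stabilizing summand implicit in the (only stable) statement $[u]=0$; the absorption Theorem~\ref{thm:VNAAbsorption} then guarantees that the trivial extension of $C^*(\A_0,u)$ one finally applies Lemma~\ref{VNAlem2.3ofIEOT} to does restrict, up to unitary equivalence in $\Pp$, to $\pi\circ\phi$ on $\A_0$. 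Once this reduction is in place the remainder is routine bookkeeping: transport the path produced by Lemma~\ref{VNAlem2.3ofIEOT} back into $U(\Pp)$.
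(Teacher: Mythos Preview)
Your arguments for (1) and (2) are correct and are essentially what the paper does: the paper cites Theorem~\ref{thm:VNAAbsorption} for (1) and refers to \cite{LoreauxNg} Lemma~2.2(a) for (2), whose proof is precisely the absorption argument $\phi\oplus\phi\sim\phi$ you wrote out.

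For (3), your reduction to $n=1$ via $\M_n\otimes\Pp\cong\Pp$ is correct and standard, and you have correctly identified Lemma~\ref{VNAlem2.3ofIEOT} as the key tool. But the step you flag as ``the main obstacle'' is a genuine gap that you have not closed. Lemma~\ref{VNAlem2.3ofIEOT} only tells you that $\pi\circ\chi(u)\sim_h 1$ in $(\pi\circ\chi(\A))'$ for \emph{some} trivial extension $\chi$ of $C^*(\A_0,u)$; after transporting back via absorption you learn that a unitary of the form $\pi(w')\,v\,\pi(w')^*$ lies in $U(\Pp)_0$, where $v=\pi\circ\chi(u)$ and $\pi(w')\in\Pp$. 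To conclude $u\in U(\Pp)_0$ you need the inclusion $\iota:C^*(\A_0,u)\hookrightarrow\Mul/\K_{\Mul}$ itself to be trivial, and your proposed mechanism --- lifting $u$ via vanishing Breuer--Fredholm index, then an unspecified Eilenberg swindle --- does not produce a $*$-homomorphic lift: the lift $U\in\Mul$ need not commute with $\phi(\A)$, only modulo $\K_{\Mul}$. The hypothesis $[u]=0$ in $K_1(\Pp)$ (not merely in $K_1(\Mul/\K_{\Mul})$) must enter in a sharper way, namely to force $[\iota]=0$ in the extension group for $C^*(\A_0,u)$; this is where the real content of \cite{LoreauxNg} Lemma~2.4 and Theorem~2.5 lies, and the paper simply invokes those results (with Theorem~\ref{thm:VNAAbsorption} replacing \cite{ElliottKucerovsky} and Lemma~\ref{VNAlem2.3ofIEOT} replacing \cite{LoreauxNg} Lemma~2.3). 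Your outline is on the right track but stops exactly at the point where the substantive extension-theoretic argument begins.
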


\begin{proof}
 
(1) follows from that  
if $\phi_0, \psi_1: \A \rightarrow \Mul$ are unital trivial essential extensions
then by Theorem \ref{thm:VNAAbsorption}, $\phi_0 \sim \phi_0 \oplus \phi_1
\sim \phi_1$. 

The argument for (2) is exactly the same as the argument for
\cite{LoreauxNg} Lemma 2.2(a), except that \cite{ElliottKucerovsky} is
replaced with (the present paper) Theorem \ref{thm:VNAAbsorption}.  

The argument for (3) exactly the same as the argument for \cite{LoreauxNg}
Lemma 2.4 and Theorem 2.5, except that \cite{ElliottKucerovsky} is 
replaced with (the present paper) Theorem \ref{thm:VNAAbsorption},
and \cite{LoreauxNg} 
Lemma 2.3 is replaced with (the present paper) 
Lemma \ref{VNAlem2.3ofIEOT}. Note that $\Mul/\K_{\Mul}$ is simple
purely infinite. 

\end{proof}
The Paschke dual algebra and its importance for uniqueness theorems are studied
in \cite{LoreauxNg} and \cite{LoreauxNgSutradhar2}.

\section{Main result}

\label{sect:MainResult}

Recall that for a nonunital C*-algebra $\D$, we let $\widetilde{\D}$ denote the unitization of $\D$.
For a general C*-algebra $\D$, we let
\[
\D^+ =_{df} \begin{cases}
\widetilde{\D} & \makebox{  if } \D \makebox{ is nonunital, and}\\
\D \oplus \mathbb{C} & \makebox{  if  } \D \makebox{  is unital.}
\end{cases}
\]

\begin{lem} \label{lem:LinDE}
Let $\Mul$ be a semifinite factor with separable predual, and let 
$\A$ be a separable nuclear C*-algebra. Suppose that
$\phi, \psi : \A \rightarrow \Mul$ are essential trivial extensions
such that $\phi(a) - \psi(a) \in \K_{\Mul}$ for all $a \in \A$;
and suppose that either both $\phi$ and $\psi$ are unital or both have
large complement. 

Suppose that there exists a separable, stable C*-subalgebra $\B \subset   
\K_{\Mul}$ such that 
\begin{enumerate}
\item $\{ 1_{\Mul} \} \cup \phi(\A) \cup 
\psi(\A) \subset \Mul(\B) \subset \Mul$, 
\item $\phi(a) - \psi(a) \in \B$ for all $a \in \A$,
\item $\phi$ and $\psi$, as maps into $\Mul(\B)$, either both are unital
or both have large complement,
\item $\phi$ and $\psi$, as maps into $\Mul(\B)$, are absorbing (either
in the unital or nonunital sense), and 
\item $[\phi, \psi] = 0$ in $KK(\A, \B)$. 
\end{enumerate}

Then there exists a norm-continuous path $\{ u_t \}_{t \in [1, \infty)}$,
of unitaries in $\mathbb{C}1 + \K_{\Mul}$, with
$u_1 = 1_{\Mul}$, such that 
for all $a \in \A$,
$$u_t \phi(a) u_t^* \rightarrow \psi(a) \makebox{ in norm.}$$ 
\end{lem}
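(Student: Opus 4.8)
The plan is to reduce Lemma~\ref{lem:LinDE} to the corresponding statement for multiplier algebras of separable stable C*-algebras, which is exactly the content of Theorem~\ref{thm:MultiplierBDFECTh} (or, more precisely, the ``absorbing + $KK$-trivial $\Rightarrow$ properly asymptotically unitarily equivalent with $u_1 = 1$'' direction, together with the fact that the relevant Paschke dual is $K_1$-injective). We are handed all the hypotheses needed for this: by assumptions (1)--(4) we already have a separable stable simple $\B \subset \K_{\Mul}$ with $\phi, \psi$ mapping into $\Mul(\B) \subset \Mul$, with $\phi - \psi$ taking values in $\B$, and with both extensions absorbing as maps into $\Mul(\B)$; and by assumption (5) we have $[\phi, \psi] = 0$ in $KK(\A, \B)$. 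The first step is therefore to invoke the multiplier-algebra theory: since $\B$ is separable and stable and $\phi, \psi : \A \to \Mul(\B)$ are absorbing trivial extensions that agree modulo $\B$ and are $KK$-equivalent, we obtain a norm-continuous path $\{u_t\}_{t \in [1,\infty)}$ of unitaries in $\mathbb{C}1_{\Mul(\B)} + \B$ with $u_1 = 1_{\Mul(\B)}$ and $u_t \phi(a) u_t^* \to \psi(a)$ in norm for all $a \in \A$. Here one needs to know that the Paschke dual $(\A^+)^d_\B$ is $K_1$-injective so that Theorem~\ref{thm:MultiplierBDFECTh} applies; this follows from Theorem~\ref{thm:PaschkeDualWellDefined} (the $K_1$-injectivity statement transfers from the $\Mul/\K_{\Mul}$ picture to the $\Mul(\B)/\B$ picture via the reduction arguments already established, since both Paschke duals are computed by absorbing extensions).

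The second step is to transport this path back up to $\Mul$. Because $\B \subset \K_{\Mul}$ and $\Mul(\B) \subset \Mul$ is a unital C*-subalgebra (Lemma~\ref{lem:MultiplierAlgebraOfSubalgebraOfVNA}), the inclusion carries $\mathbb{C}1_{\Mul(\B)} + \B$ into $\mathbb{C}1_{\Mul} + \K_{\Mul}$, and $1_{\Mul(\B)} = 1_{\Mul}$. Hence the same path $\{u_t\}$, now viewed inside $\Mul$, is a norm-continuous path of unitaries in $\mathbb{C}1_{\Mul} + \K_{\Mul}$ with $u_1 = 1_{\Mul}$, and the convergence $u_t \phi(a) u_t^* \to \psi(a)$ in norm is a statement about norms of elements of $\Mul$, hence is preserved verbatim under the inclusion. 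This gives exactly the conclusion of the lemma.

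The only subtlety — and the step I expect to require the most care — is the first one: making sure that the version of Theorem~\ref{thm:MultiplierBDFECTh} being applied genuinely delivers a path with $u_1 = 1$, and that its $K_1$-injectivity hypothesis on the Paschke dual of $\A^+$ relative to $\B$ is actually satisfied. For the normalization $u_1 = 1$: if the path produced by the cited theorem starts at some unitary $u_1' \in \mathbb{C}1 + \B$, one first uses $K_1$-injectivity of the Paschke dual (or simply properness of the asymptotic unitary equivalence together with the fact that $u_1'$ lies in the connected component of the identity of $U(\mathbb{C}1 + \B)$, which is the case since $\mathbb{C}1 + \B$ has trivial $K_1$ when $\B$ is stable) to prepend a path from $1$ to $u_1'$ inside $U(\mathbb{C}1 + \B)$, then reparametrize; the composite still converges because the prepended segment is on a compact parameter interval and the $\phi$-conjugates vary continuously. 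For the $K_1$-injectivity of $(\A^+)^d_\B$: this is where one leans on Theorem~\ref{thm:PaschkeDualWellDefined} and the reduction lemmas of Section~\ref{sect:Reduction} — one observes that an absorbing trivial extension of $\A^+$ into $\Mul(\B)$ pushes forward to an absorbing trivial extension into $\Mul$, identifying $(\A^+)^d_\B$ up to the relevant structure with $(\pi_M \circ \phi)(\A^+)'$, which is $K_1$-injective by Theorem~\ref{thm:PaschkeDualWellDefined}(3). Once these bookkeeping points are nailed down, the lemma follows immediately.
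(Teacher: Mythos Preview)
There is a genuine gap in the first step, at exactly the point you flag as ``the only subtlety.'' You need $K_1$-injectivity of the Paschke dual $(\A^+)^d_{\B} = (\pi_B \circ \phi)(\A^+)' \subset \C(\B)$ in order to invoke Theorem~\ref{thm:MultiplierBDFECTh}, and you propose to obtain it by ``identifying $(\A^+)^d_\B$ up to the relevant structure with $(\pi_M \circ \phi)(\A^+)'$'' and then citing Theorem~\ref{thm:PaschkeDualWellDefined}. But these two algebras are \emph{not} identified: the inclusions $\B \hookrightarrow \K_{\Mul}$ and $\Mul(\B) \hookrightarrow \Mul$ only induce a (generally non-injective) $*$-homomorphism $\C(\B) \to \Mul/\K_{\Mul}$, and hence merely a $*$-homomorphism $(\pi_B \circ \phi)(\A)' \to (\pi_M \circ \phi)(\A)'$. $K_1$-injectivity of the target says nothing about $K_1$-injectivity of the source. (Indeed, whether such Paschke duals in $\C(\B)$ are always $K_1$-injective is essentially the open Blanchard--Rohde--R{\o}rdam question mentioned before Theorem~\ref{thm:PaschkeDualWellDefined}.) A secondary slip: it is not true that $\mathbb{C}1 + \B$ has connected unitary group whenever $\B$ is stable --- take $\B = S\K$ --- so your fix for normalizing $u_1 = 1$ also fails as written.

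The paper's proof is engineered precisely to sidestep this. It does \emph{not} claim $K_1$-injectivity in $\C(\B)$. Instead it first obtains, from \cite{LoreauxNg}, an \emph{asymptotic} (not proper) equivalence $\phi \sim_{asymp,\B} \psi$ via a path $\{u_{0,t}\}$ in $U(\Mul(\B))$; shows via Paschke duality that $[\pi_B(u_{0,0})] = 0$ in $K_1((\pi_B\circ\phi)(\A)')$; then pushes this class forward along the map to $\Mul/\K_{\Mul}$ and applies $K_1$-injectivity \emph{there} (Theorem~\ref{thm:PaschkeDualWellDefined}) to get $u_{0,0} \sim_h 1$ in $U(\pi_M^{-1}((\pi_M\circ\phi)(\A)'))$. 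This homotopy lives in $\Mul$, not in $\Mul(\B)$, so a \emph{second} reduction (Lemma~\ref{lem:PAsympUEReduction}, case (e)~ii.) is performed to produce a new separable simple stable $\B_1 \subset \K_{\Mul}$ absorbing both the path and the homotopy, yielding $\phi \sim_{asymp,\B_1} \psi$ with a path starting at $1$. Only then does the paper upgrade to a \emph{proper} asymptotic equivalence, via the Dadarlat--Eilers inner-automorphism argument (\cite{DadarlatEilers} Proposition~2.15) applied inside $\phi(\A) + \B_1$, which is what finally produces unitaries in $\mathbb{C}1 + \B_1 \subset \mathbb{C}1 + \K_{\Mul}$. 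Your shortcut collapses all of this into one black-box citation whose hypothesis you have not verified.
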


\begin{proof}

Before beginning, we recall that  $\pi_B : \Mul(\B) \rightarrow
\C(\B)$ and $\pi_M : \Mul \rightarrow \Mul/\K_{\Mul}$ denote the relevant
quotient maps.  Thus, $(\pi_B \circ \phi)(\A)'$ denotes the commutant
of $(\pi_B \circ \phi)(\A)$ in $\C(\B)$; and 
$(\pi_M \circ \phi)(\A)'$ denotes the commutant of 
$(\pi_M \circ \phi)(\A)$ in $\Mul/\K_{\Mul}$.

The first part of the proof is similar to that of \cite{LoreauxNgSutradhar2}
Theorem 2.6.

Now suppose that the hypotheses (including statements (1) to (5)) are satisfied.

Suppose that both $\phi$ and $\psi$ have large complement.   
  Thus, $\phi(\A)^{\perp} \subset \Mul(\B)$ contains a projection which is Murray--von Neumann equivalent
  to $1_{\Mul(\B)}$, and by
  \cite{ElliottKucerovsky} section 16, page~402 and \cite{GabeEK},
  the map $\phi^+ :
  \A^+ \rightarrow \Mul(\B)$ given by
  $\phi^+ |_{\A} = \phi$ and
  $\phi^+(1) = 1$ is a unital absorbing trivial extension (i.e., $\pi_{\B} \circ \phi^+$ is unitally absorbing).
  The same holds for $\psi$ and $\psi^+$.
  Moreover, $(\phi^+,\psi^+)$ is a generalized homomorphism.  (See
\cite{JensenThomsenBook} Chapter 4 for the generalized homomorphism
picture of KK.)   
  Additionally, $[\phi^+,\psi^+] = 0$ in $KK(\A^+, \B)$
because a homotopy of generalized homomorphisms $(\phi_s,\psi_s)$ between $(\phi,\psi)$ and $(0,0)$ lifts to a homotopy $(\phi^+_s,\psi^+_s)$, and hence $[\phi^+,\psi^+] = [0^+,0^+] = 0$ in $KK(\A^+, \B)$.
  Thus, we may assume that
  $\A$ is unital and $\phi$ and $\psi$ are unital *-monomorphisms.

By \cite{LoreauxNg} Lemma 3.3, there exists a norm continuous path  
$\{ u_{0,t} \}_{t \in [0, \infty)}$ of unitaries in $\Mul(\B)$ such
that $\{ u_{0,t} \}_{t \in [0, \infty)}$ witnesses that 
$$\phi \sim_{asymp, \B} \psi.$$ 

  It is trivial to see that this implies that
  \begin{equation*}
    [\phi, u_{0,0}\phi u_{0,0}^*] = [ \phi, \psi ] = 0,
  \end{equation*}
  and that $\pi_B(u_{0,t}) \in (\pi_B \circ \phi)(\A)'$ for all $t$.

  It is well-known that we have
  a group isomorphism $KK(\A, \B) \rightarrow KK_{Hig}(\A, \B) : [\phi, \psi]
  \rightarrow [\phi, \psi, 1]$.
  (See \cite{HigsonKK} Lemma 3.6.  Here, $KK_{Hig}$ is the version of 
KK-theory presented in \cite{HigsonKK} section 2.)
  Hence, $[\phi, u_{0,0} \phi u_{0,0}^*, 1] = 0$ in $KK_{Hig}(\A, \B)$.
 Hence, by \cite{HigsonKK} Lemma 2.3, $[\phi, \phi, u_{0,0}^*] = 0$
  in $KK_{Hig}(\A, \B)$.

By a unital version of Paschke duality (see \cite{LoreauxNgSutradhar2} Proposition
2.5),  
  there is a group isomorphism $K_1((\pi_B \circ \phi)(\A)') \rightarrow 
KK_{Hig}(\A, \B)$
  which sends $[\pi_B(u_{0,0})]$ to $[\phi, \phi, u_{0,0}^*]$.
  Hence, $[\pi_B(u_{0,0})] = 0$ in $K_1((\pi_B \circ \phi)(\A)')$

By Lemma \ref{lem:MultiplierAlgebraOfSubalgebraOfVNA}, the inclusion
$\B \hookrightarrow \K_{\Mul}$ induces *-homomorphisms
$\Mul(\B) \rightarrow \Mul$ and $\C(\B) \rightarrow \Mul/\K_{\Mul}$,
which in turn induce a *-homomorphism 
$(\pi_{\B} \circ \phi)(\A)' \rightarrow (\pi_M \circ \phi)(\A)' \subset
\Mul/\K_{\Mul}$.  
Hence, $[\pi_M(u_{0,0})] = 0$ in $K_1((\pi_M  \circ \phi)(\A)')$.
By Theorem \ref{thm:PaschkeDualWellDefined}, 
$(\pi_M  \circ \phi)(\A)'$ is $K_1$-injective,
and hence, $\pi_M(u_{0,0}) \sim_h 1$ in $U((\pi_M \circ \phi)(\A)')$  
($\subset \Mul/\K_{\Mul}$).

We claim that 
\begin{equation} \label{equ:GetStrongAUE2} u_{0,0} \sim_h 1 \makebox{ in }
 U(\pi_M^{-1}((\pi_M \circ \phi)(\A)')) \makebox{ (}\subset \Mul \makebox{).}
\end{equation} 
Here is a short proof:
Since $\pi_M(u_{0,0}) \sim_h 1$, by 
\cite{WeggeOlsen} Corollary 4.3.3, 
we can find a unitary $w \in \pi_M^{-1}((\pi_M \circ \phi)(\A)')$ with   
\begin{equation} \label{equ:GetStrongPAUE}
w \sim_h 1 \makebox{ in  } U(\pi_M^{-1}((\pi_M \circ \phi)(\A)'))
\end{equation} 
such that $\pi_M(w) = \pi_M(u_{0,0})$.   So $u_{0,0} \in w + \K_{\Mul}$.
So $u_{0,0} w^* \in 1 + \K_{\Mul}$.  Define $v =_{df} u_{0,0} w^*$.
Then $v \in 1 + \K_{\Mul}$ and $u_{0,0} = v w$.  
Since the unitary group of $1 + \K_{\Mul}$ is path-connected,
$v \sim_h 1$ in $U(1 + \K_{\Mul}) \subset 
U(\pi_M^{-1}((\pi_M \circ \phi)(\A)'))$.  From this and 
(\ref{equ:GetStrongPAUE}), we get (\ref{equ:GetStrongAUE2}) as required.

By Lemma \ref{lem:PAsympUEReduction}, we can find a separable simple
stable
C*-subalgebra $\B_1 \subset \K_{\Mul}$ which satisfies statements (a) to (d),
as well as statement (e) ii., of the conclusion of Lemma 
\ref{lem:PAsympUEReduction} (note that we use (\ref{equ:GetStrongAUE2}) and
that
$\{ u_{0,t} \}_{t \in [0,\infty)}$ gives a path in $\Mul$).  
Hence, we can find a norm-continuous path 
$\{ u_t \}_{t \in [0, \infty)}$, of unitaries in $\Mul(\B_1)$, such that
for all $a \in \A$,
(as maps into $\Mul(\B_1)$)
$$u_t \phi(a) u_t^* - \psi(a) \in \B_1 \makebox{ for all }t,
\makebox{ }\| u_t \phi(a) u_t^* - \psi(a) \| \rightarrow 0,
\makebox{  and  } 
u_0 = 1.$$

  Now for all $t \in [0,\infty)$, let $\alpha_t \in Aut(\phi(\A) + \B_1)$ be given
  by $\alpha_t(x) =_{df} u_t x u_t^*$ for all $x \in \phi(\A) + \B_1$.
  Thus, $\{ \alpha_t \}_{t \in [0, \infty)}$ is a uniformly continuous path of
  automorphisms of $\phi(\A) + \B_1$ such that $\alpha_0 = id$.
  Hence, by \cite{DadarlatEilers} Proposition 2.15 (see also
  \cite{LinStableAUE} Theorems 3.2 and 3.4),
  there exists a (norm-) continuous path $\{ v_t \}_{t\in [0, \infty)}$ of unitaries
  in $\phi(\A) + \B_1$ such that
  $v_0 = 1$ and
  $\| v_t x v_t^* -  u_t x u_t^*  \| 
\rightarrow 0$ as $t \rightarrow \infty$
  for all $x \in \phi(\A) + \B_1$.

  We now proceed as in the last part of the proof of 
\cite{DadarlatEilers} Proposition 3.6 Step 1 (see
  also the proof of \cite{LinStableAUE} Theorem 3.4).
  For all $t \in [0, \infty)$, let $a_t \in \A$ and $b_t \in \B_1$ be the
unique elements such that
  $v_t = \phi(a_t) + b_t$.
Uniqueness follows from the fact that $\pi \circ \phi$ is injective.  
Moreover, by this uniqueness, since $v_0 = 1_{\Mul}$, 
\begin{equation} \label{equ:a0b0} 
a_0 = 1_{\A}\makebox{  and  } b_0 = 0. \end{equation}     
Again, 
since $\pi \circ \phi$ is injective, we have that for all $t$,
  $a_t$ is a unitary in $\A$, and hence, $\phi(a_t)$ is a unitary 
in $\phi(\A) + \B_1$.
  Note also that since $\pi \circ \phi = \pi \circ \psi$ and both maps are
  injective, $\| a_t a a_t^* - a \|  \rightarrow 0$ 
as $t \rightarrow \infty$ for
  all $a \in \A$.
  For all $t$, let $w_t =_{df}  v_t \phi(a_t)^* \in 1 + \B_1$.
Note that since $v_0 = 1$ and by (\ref{equ:a0b0}), we have that
$w_0 = 1$.   
Also, $\{ w_t \}_{t \in [0, \infty)}$ is a norm continuous path of unitaries in
  $\mathbb{C} 1 + \B_1 \subset \mathbb{C}1 + \K_{\Mul}$, and for all $a \in \A$,
  \begin{align*}
    \| w_t \phi(a) w_t^* - \psi(a) \| 
    &\leq \| w_t \phi(a) w_t^* -  v_t \phi(a) v_t^*  \| \\
    &\qquad+ \|  v_t \phi(a) v_t^*  - u_t \phi(a) u_t^* \| \\
    &\qquad+ \| u_t \phi(a) u_t^* - \psi(a) \| \\
    &= \|  v_t\phi( a_t^* a a_t - a) v_t^*  \| \\
    &\qquad+ \|  v_t \phi(a) v_t^*  - u_t \phi(a) u_t^* \| \\
    &\qquad+ \| u_t \phi(a) u_t^* - \psi(a) \| \\
    &\rightarrow 0. \qedhere
  \end{align*}
\end{proof}

We now introduce our KK invariant.
\begin{df} \label{df:KKInvariant}
Let $\A$ be a separable nuclear C*-algebra, and $\Mul$ be a semifinite factor
with separable predual.  Let $\phi, \psi : \A \rightarrow \Mul$
be essential trivial extensions such that 
$\phi(a) - \psi(a) \in \K_{\Mul}$ for all $a \in \A$. 
We define
\[
[\phi, \psi]_{CS} =_{df} [\pi(\{ (1-t) \phi(\cdot) + t \psi(\cdot) \}_{t
\in (0,1)})] \in KK(\A, \C(S \K_{\Mul})).  
\]
\end{df}

\begin{thm} \label{thm:MainResultEasyDirection}
Let $\Mul$ be a semifinite factor with separable predual, and let $\A$
be a separable nuclear C*-algebra.  Let $\phi, \psi : \A \rightarrow \Mul$
be essential trivial extensions, with $\phi(a) - \psi(a) \in \K_{\Mul}$
for all $a \in \A$, and such that
either both $\phi$ and $\psi$ are unital or both have large complement.

If $\phi \sim_{pasymp} \psi$ then $[\phi, \psi]_{CS} = 0$ in $KK(\A, \C(S 
\K_{\Mul}))$. 
\end{thm}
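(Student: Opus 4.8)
The plan is to show that the hypothesis $\phi\sim_{pasymp}\psi$ forces the $*$-homomorphism whose class defines $[\phi,\psi]_{CS}$ to be a \emph{split} (trivial) extension, and then to deduce vanishing from $KK$-triviality of a multiplier algebra. Recall from Definition~\ref{df:KKInvariant} that $[\phi,\psi]_{CS}=[\Psi]$, where $\Psi=\pi\circ\Phi$, the map $\pi\colon\Mul(S\K_{\Mul})\to\C(S\K_{\Mul})$ is the corona quotient, and $\Phi\colon\A\to\Mul(S\K_{\Mul})$ sends $a$ to the path $\{(1-t)\phi(a)+t\psi(a)\}_{t\in(0,1)}=\{\phi(a)+t(\psi(a)-\phi(a))\}_{t\in(0,1)}$; since $\phi(a)-\psi(a)\in\K_{\Mul}$ this is a bounded, norm- (hence strictly-) continuous function $(0,1)\to\Mul=\Mul(\K_{\Mul})$, so it really is an element of $\Mul(S\K_{\Mul})=C_b((0,1),\Mul)_{stri}$. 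That $\Psi$ is a $*$-homomorphism comes from the identity $\Phi(ab)(t)-\Phi(a)(t)\Phi(b)(t)=t(1-t)(\phi(a)-\psi(a))(\phi(b)-\psi(b))$, whose right-hand side lies in $C_0((0,1),\K_{\Mul})=S\K_{\Mul}$ because $\K_{\Mul}$ is an ideal.

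Next I would build a genuine $*$-homomorphism $\widetilde\Phi\colon\A\to\Mul(S\K_{\Mul})$ lifting $\Psi$. Let $\{u_t\}_{t\in[1,\infty)}$ be a norm-continuous path of unitaries in $\mathbb{C}1_{\Mul}+\K_{\Mul}$ witnessing $\phi\sim_{pasymp}\psi$. Since the unitary group of $\mathbb{C}1_{\Mul}+\K_{\Mul}$ is path-connected (used already in the proof of Lemma~\ref{lem:LinDE}), after prepending a path starting at $1_{\Mul}$ and reparametrizing we may assume $u_1=1_{\Mul}$; this changes nothing about the two asymptotic conditions. Fix a homeomorphism $h\colon(0,1)\to[1,\infty)$ with $h(t)\to1$ as $t\to0^+$ and $h(t)\to\infty$ as $t\to1^-$, e.g.\ $h(t)=(1-t)^{-1}$, and put
\[
\widetilde\Phi(a)=\{\,u_{h(t)}\,\phi(a)\,u_{h(t)}^{*}\,\}_{t\in(0,1)}.
\]
For each fixed $t$ the map $a\mapsto u_{h(t)}\phi(a)u_{h(t)}^{*}$ is a $*$-homomorphism, $\|u_{h(t)}\phi(a)u_{h(t)}^{*}\|\le\|a\|$, and $t\mapsto u_{h(t)}$ is norm-continuous into $\Mul$, so $\widetilde\Phi$ is a $*$-homomorphism into $C_b((0,1),\Mul)_{stri}=\Mul(S\K_{\Mul})$.

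The main point to verify is that $\widetilde\Phi(a)-\Phi(a)\in S\K_{\Mul}=C_0((0,1),\K_{\Mul})$ for each $a\in\A$. Writing $u_{h(t)}=1_{\Mul}+k_t$ with $k_t\in\K_{\Mul}$, the value of $\widetilde\Phi(a)-\Phi(a)$ at $t$ equals
\[
\big(k_t\phi(a)+\phi(a)k_t^{*}+k_t\phi(a)k_t^{*}\big)+t\big(\phi(a)-\psi(a)\big),
\]
which lies in $\K_{\Mul}$, depends norm-continuously on $t$, tends to $0$ as $t\to0^+$ because $u_{h(t)}\to u_1=1_{\Mul}$ and $t\to0$, and tends to $(\psi(a)-\phi(a))+(\phi(a)-\psi(a))=0$ as $t\to1^-$ because $h(t)\to\infty$ forces $u_{h(t)}\phi(a)u_{h(t)}^{*}\to\psi(a)$ in norm (this is the only place the full strength of $\sim_{pasymp}$ enters). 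Hence it belongs to $C_0((0,1),\K_{\Mul})=S\K_{\Mul}$, and therefore $\pi\circ\widetilde\Phi=\pi\circ\Phi=\Psi$; in particular $\Psi$ is a trivial extension.

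Finally, $[\phi,\psi]_{CS}=[\Psi]=\pi_*[\widetilde\Phi]$ in $KK(\A,\C(S\K_{\Mul}))$, so it suffices to see that $[\widetilde\Phi]=0$ in $KK(\A,\Mul(S\K_{\Mul}))$, i.e.\ that $\Mul(S\K_{\Mul})$ is $KK$-trivial. Since $\K_{\Mul}$ is a stable C*-algebra, so is $S\K_{\Mul}=C_0(0,1)\otimes\K_{\Mul}$; hence $\Mul(S\K_{\Mul})$ contains a sequence $\{V_n\}$ of isometries with pairwise orthogonal ranges and $\sum_nV_nV_n^{*}=1$ converging strictly. Setting $g=\sum_nV_n\,\widetilde\Phi(\cdot)\,V_n^{*}$, a shift unitary implements $\widetilde\Phi\oplus g\sim g$, so $[\widetilde\Phi]+[g]=[g]$ and $[\widetilde\Phi]=0$; thus $[\phi,\psi]_{CS}=0$. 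I expect the conceptual content is simply the observation that reparametrizing the rotation path $\{u_t\}$ onto $(0,1)$ turns the straight-line homotopy extension $\Psi$ into an honest $*$-homomorphism, while the only step needing a little care is the $KK$-triviality of $\Mul(S\K_{\Mul})$, which must be justified through stability of $S\K_{\Mul}$ rather than mere proper infiniteness of $\Mul$, because $S\K_{\Mul}$ is not $\sigma$-unital.
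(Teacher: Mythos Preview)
Your argument is correct and is in fact more direct than the paper's. One small slip: writing $u_{h(t)}=1_{\Mul}+k_t$ is not quite right, since a unitary in $\mathbb{C}1+\K_{\Mul}$ has the form $\alpha_t 1_{\Mul}+k_t$ with $|\alpha_t|=1$. This does not affect your computation: from $|\alpha_t|=1$ one still gets $u_{h(t)}\phi(a)u_{h(t)}^{*}-\phi(a)\in\K_{\Mul}$, and the endpoint limits are unchanged. (Alternatively, replace $u_t$ by $\overline{\alpha_t}\,u_t$ to force $\alpha_t\equiv 1$ without disturbing anything.)

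The paper takes a longer route. It first passes through the separable reduction Lemma~\ref{lem:PAsympUEReduction} to land in $\Mul(\B)$ for a separable simple stable $\B$, invokes absorption (Lemma~\ref{lem:EKAbsorb}) and Dadarlat--Eilers to conclude $[\phi,\psi]=0$ in $KK(\A,\B)$, and then feeds this into Lemma~\ref{lem:LinDE} solely to obtain a path $\{u_t\}_{t\in[0,1)}$ in $\mathbb{C}1+\K_{\Mul}$ with $u_0=1$. With that path in hand the paper builds a unitary $w\in\Mul(S\K_{\Mul})$ and conjugates the extension $\Psi$ to the constant map $\pi\circ\psi$, asserting $[\pi\circ\psi]=0$. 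Your observation that $U(\mathbb{C}1+\K_{\Mul})$ is path-connected already lets you normalize $u_1=1$ directly from the hypothesis, bypassing the entire reduction--absorption--LinDE detour; and rather than conjugate, you simply exhibit the lift $\widetilde\Phi$. Both arguments finish the same way---factoring through $\Mul(S\K_{\Mul})$ and using its $KK$-triviality---but you spell out the Eilenberg swindle, whereas the paper leaves ``$[\pi\circ\psi]=0$'' unjustified. Your route is shorter and makes the dependence on the structure of $\K_{\Mul}$ (stability, connected unitary group of its unitization) transparent; the paper's route has the side benefit of reusing machinery already needed for the harder direction of Theorem~\ref{thm:TotalMainResult}.
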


\begin{proof}
By Lemma \ref{lem:PAsympUEReduction}, we can find a separable simple
stable C*-algebra $\B$ such that conditions (a) to (d), as well as
condition (e) i., of the conclusion of Lemma \ref{lem:PAsympUEReduction}
are satisfied.  By Lemma \ref{lem:EKAbsorb} and item (d) above, the
trivial extensions $\phi, \psi : \A \rightarrow \Mul(\B)$ are 
both absorbing.   Since $\phi \sim_{pasymp} \psi$ (as maps into 
$\Mul(\B)$), by \cite{DadarlatEilers} Lemma 3.3 (and also 
\cite{HigsonKK} Lemma 3.6), $[\phi, \psi] = 0$ in $KK(\A, \B)$.
Hence, by Lemma \ref{lem:LinDE}, we can find a norm-continuous path
$\{u_t \}_{t \in [0, 1)}$, of unitaries in $\mathbb{C}1 + \K_{\Mul}$,
with $u_0 = 1$, such that for all $a \in \A$, (as maps into $\Mul$)
$$u_t \phi(a) u_t^* \rightarrow \psi(a) \makebox{ as } t \rightarrow 1.$$
Recalling that $\Mul(S \K_{\Mul}) = C_b((0,1), \Mul)_{stri}$ 
(the bounded strictly continuous functions from $(0,1)$ to $\Mul$),
let $w \in \Mul(S \K_{\Mul})$ be the unitary given by
$w_t =_{df} u_{1-t}$ for all $t \in (0,1)$.  
Hence, 
for all $a \in \A$, as maps into $\Mul(S \K_{\Mul})/ S \K_{\Mul}$,  
$$\pi(w) \pi(\{ (1-t)\phi(a) + t \psi(a) \}_{t \in (0,1)}) \pi(w)^*
= \pi(\{ (1-t) \psi(a) + t \psi(a) \}_{t \in (0,1)})
= \pi(\psi(a)).$$
Hence,
$$[\phi, \psi]_{CS} = [\pi(\{ (1-t) \phi(\cdot) + t \psi(\cdot) \}_{t \in 
(0,1)})] = [\pi \circ \psi] = 0 \makebox{  in  } KK(\A, \C(S \K_{\Mul})).$$  
 
\end{proof}

We require a terminology which will only be used in the next             
two results. Let $\A, \C$ be C*-algebras, with $\C$ unital,
and let $\phi : \A \rightarrow \C$ be a *-homomorphism.
Then $\phi$ is said to be \emph{strongly $O_{\infty}$-stable} 
(see \cite{GabeMemoirs} (1.1)) if there exist bounded continuous 
paths $\{ S^{(j)}_t \}_{t \in [0, \infty)}$ in $\C$ (for $j = 0, 1$) 
such that for all $a \in \A$ and $j, k  = 0, 1$,    
\begin{equation}
\lim_{t \rightarrow \infty} \| S^{(j)}_t \phi(a) - \phi(a) S^{(j)}_t \|
=0 \makebox{  and  }  \lim_{t \rightarrow \infty}
\| ((S^{(j)}_t)^* S^{(k)}_t - \delta_{j,k})\phi(a) \| = 0.   
\end{equation}
In the above, 
$\delta_{j,k} = \begin{cases} 1 & j = k \\ 0 & j \neq k\end{cases}$ (i.e.,
$\delta_{j,k}$ is the Kronecker $\delta$ symbol).

\begin{lem} \label{lem:SOinftyStable}
Let $\Mul$ be a semifinite factor with separable predual, and let $\A$
be a separable nuclear C*-algebra.  Let $\phi, \psi : \A \rightarrow \Mul$
be essential trivial extensions, with $\phi(a) - \psi(a) \in \K_{\Mul}$
for all $a \in \A$, and such that
either both $\phi$ and $\psi$ are unital or both have large complement.

Then the *-homomorphism
$\pi(\{ (1-t) \phi + t \psi \}_{t \in (0,1)}) : \A \rightarrow
\C(S\K_{\Mul})$ is strongly $O_{\infty}$-stable.   
\end{lem}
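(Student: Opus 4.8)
The plan is to produce the two norm-continuous paths of elements of $\C(S\K_{\Mul})$ required by the definition of strong $O_\infty$-stability by exploiting the structure of the corona algebra $\C(S\K_{\Mul}) = \Mul(S\K_{\Mul})/S\K_{\Mul}$, where $\Mul(S\K_{\Mul})$ is realized (as in Lemma \ref{lem:ReductionJazz}) as the bounded strictly continuous functions $C_b((0,1),\Mul)_{stri}$. The key observation is that $\Mul$, being a semifinite factor with separable predual, is properly infinite, so there exist isometries $T_0, T_1 \in \Mul$ with $T_0T_0^* + T_1T_1^* = 1_\Mul$, and these can be taken to be constant functions in $\Mul(S\K_{\Mul})$, hence give honest isometries $t_0, t_1 \in \Mul(S\K_{\Mul})$ with $t_0t_0^* + t_1t_1^* = 1$. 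However, this alone does not work directly, because the elements $S^{(j)}$ in the definition must commute with $\Phi(a) =_{df} \pi(\{(1-t)\phi(a)+t\psi(a)\}_{t\in(0,1)})$ asymptotically, and a fixed pair of isometries need not do so.

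The second ingredient is therefore a reduction, via Lemma \ref{lem:ReductionJazz} (or directly Lemma \ref{lem:ExtReduction}), to a separable simple stable C*-subalgebra $\B \subseteq \K_{\Mul}$ with $ran(\phi), ran(\psi) \subseteq \Mul(\B)$ and $\phi(a)-\psi(a)\in\B$, so that $\{(1-t)\phi(a)+t\psi(a)\}_{t\in(0,1)}$ lands in $\Mul(S\B)$ and $\Phi$ factors through $\C(S\B)$. In this separable setting, by Lemma \ref{lem:EKAbsorb} together with item (d) of Lemma \ref{lem:PAsympUEReduction}, the extensions $\phi,\psi:\A\to\Mul(\B)$ are absorbing, and hence so is the extension $\Phi:\A\to\C(S\B)$ corresponding to $\{(1-t)\phi(a)+t\psi(a)\}$; indeed one checks that for each $a\in\A_+\setminus\{0\}$ there is $x\in\C(S\B)$ with $x\Phi(a)x^*=1$, which is the purely-large/absorption criterion. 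The standard fact — used implicitly in \cite{GabeMemoirs} — is that an absorbing extension of a separable nuclear C*-algebra into a corona algebra of a stable C*-algebra is automatically strongly $O_\infty$-stable: because $\Phi$ absorbs the trivial extension built from $1_{\Mul(\B)}\otimes \mathcal{O}_\infty$, one can conjugate to move a fixed $\mathcal{O}_\infty$ (in particular two isometries with orthogonal ranges summing to $1$) into an algebra that commutes, up to $S\B$, with $ran(\Phi)$; pulling the conjugating unitaries through a dense sequence of $\A$ and using a diagonal/reindexing argument produces the asymptotically commuting, asymptotically $\mathcal{O}_2$-like paths $\{S^{(j)}_t\}$.

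Concretely, the steps are: (1) apply the reduction lemma to get $\B$ separable simple stable with $\Phi$ factoring through $\C(S\B)$; (2) verify absorption of $\Phi$ over $\C(S\B)$ using Lemma \ref{lem:EKAbsorb} and the fullness/stability argument as in the proof of that lemma, noting $\Mul/\K_{\Mul}$ — and hence $\C(S\B)$ suitably — is simple purely infinite along the fibers; (3) invoke (or reprove in this setting) the implication ``absorbing $\Longrightarrow$ strongly $O_\infty$-stable'' from \cite{GabeMemoirs}, which amounts to absorbing a trivial extension containing a unital copy of $\mathcal{O}_\infty$ in its multiplier algebra and conjugating the two canonical isometries into the commutant-modulo-$S\B$ of $ran(\Phi)$; (4) transport the resulting paths from $\C(S\B)$ back to $\C(S\K_{\Mul})$ via the *-homomorphism $\C(S\B)\to\C(S\K_{\Mul})$ induced by $\B\hookrightarrow\K_{\Mul}$ (Lemma \ref{lem:MultiplierAlgebraOfSubalgebraOfVNA}), which preserves the defining limit conditions.

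The main obstacle I anticipate is step (3): making precise, in the non-$\sigma$-unital setting with $\Mul/\K_{\Mul}$ simple purely infinite, the passage from ``$\Phi$ absorbs a trivial extension with $\mathcal{O}_\infty$ in the multiplier algebra'' to the existence of the asymptotically central, asymptotically orthonormal sequences $\{S^{(j)}_t\}$ — i.e., carefully extracting the continuous paths from a sequence of unitary conjugations and handling the reindexing so that both the commutator condition and the $(S^{(j)})^*S^{(k)}\Phi(a)\approx\delta_{jk}\Phi(a)$ condition hold simultaneously for all $a$ in the limit. Everything else (the reduction, the absorption verification, and the corona-map transport) is routine given the lemmas already established in the excerpt.
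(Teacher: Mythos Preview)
Your approach is viable and yields a valid proof, but it is genuinely different from the paper's. The paper never verifies that $\Phi : \A \to \C(S\B)$ is absorbing. Instead, after reducing via Lemma~\ref{lem:PAsympUEReduction} and passing to $\B \otimes \K$, it fixes a reference extension $\phi_1 = 1_{\Mul(\B)} \otimes \phi_0$ into $1_{\Mul(\B)} \otimes \mathbb{B}(l_2)$ whose range commutes with two fixed isometries $V_0, V_1 \in \Mul(\B) \otimes 1$, uses \cite{LoreauxNg} Lemma 3.3 to obtain unitary paths $\{U_t\}_{t \in [0,1)}$ and $\{W_t\}_{t \in (0,1]}$ implementing $\phi \sim_{asymp,\B\otimes\K} \phi_1$ (as $t \to 1^-$) and $\psi \sim_{asymp,\B\otimes\K} \phi_1$ (as $t \to 0^+$), and then builds $\widetilde{S}^{(j)}_t$ by hand---essentially $V_j$ near $t=0$, $W_t V_j W_t^*$ near $t=1$, with an explicit homotopy through a unitary $Z$ in the Paschke-dual-type algebra to glue the two pieces on $[\frac{1}{3},\frac{2}{3}]$. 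This is a direct construction of the witnessing isometries in $\Mul(S\K_{\Mul})$.

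Your route is more conceptual and, once absorption of $\Phi$ over $S\B$ is in hand, your step (3) is in fact trivial rather than the main obstacle: absorption gives $\Phi \sim \Phi \oplus \Phi$, hence two honest isometries with orthogonal ranges in $\Phi(\A)' \subseteq \C(S\B)$, and \emph{constant} paths already satisfy the strong $O_\infty$-stability conditions. The real work sits in step (2), and there your justification is too thin. The phrase ``simple purely infinite along the fibres'' is not the right mechanism: the criterion in Lemma~\ref{lem:EKAbsorb} demands a global $x \in \C(S\B)$ with $x\Phi(a)x^* = 1$, and a constant lift $X \in \Mul(\B)$ with $X\phi(a)X^* = 1 + k$ only gives $XF(t)X^* - 1 = k + t\,X(\psi(a)-\phi(a))X^*$, which does not lie in $S\B$. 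What does work is: use Lemma~\ref{lem:Feb220251AM} to choose $X \in \Mul(\B)$ with $X\phi(a)X^* = 1_{\Mul(\B)}$ exactly, so $XF(t)X^* = 1 + tb$ with $b \in \B$; then pick (stability of $\B$) an isometry $S_0 \in \Mul(\B)$ with $\|bS_0\| < 1$, so $G(t) =_{df} S_0^*(1+tb)S_0$ is uniformly invertible on $(0,1)$, and $Z(t) =_{df} G(t)^{-1/2}S_0^* X$ defines a norm-continuous bounded $\Mul(\B)$-valued path with $Z(t)F(t)Z(t)^* = 1$ identically. Then $Z \in \Mul(S\B)$ witnesses the criterion exactly, and Lemma~\ref{lem:EKAbsorb} applies. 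With this fix your argument goes through; the paper's explicit construction trades this absorption verification for a longer but more self-contained path-building.
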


\begin{proof}[Quick sketch of proof.]  
We may assume that $\phi$ and $\psi$ both are unital (the
proof for the large complement case is essentially the same).

Apply Lemma \ref{lem:PAsympUEReduction} to $\A, \Mul, \phi, \psi$
to get a separable simple stable C*-subalgebra $\B \subseteq \K_{\Mul}$
that satisfies statements (a) to (d) of the conclusion of Lemma
\ref{lem:PAsympUEReduction}.
Since $\B \cong \B \otimes \K$, we may work with $\B \otimes \K$ instead
of $\B$ (and view $\B \otimes \K \subseteq \K_{\Mul}$).

Let $\phi_0 : \A \rightarrow \mathbb{B}(l_2)$ be any (strongly) unital
 trivial 
essential extension.  Then
$\phi_1 =_{df} 1 \otimes \phi_0 : \A \rightarrow 1_{\Mul(\B)} 
\otimes \mathbb{B}(l_2) \subset \Mul(\B \otimes \K)$ 
is a trivial essential extension
with large complement.
By Lemma \ref{lem:PAsympUEReduction} statement (d) and
Lemma \ref{lem:EKAbsorb}, 
$\phi$, $\psi$ (as maps into $\Mul(\B \otimes \K)$)
and $\phi_1$ are all (unitally) absorbing extensions of $\B \otimes \K$.
Hence, 
by \cite{LoreauxNg} Lemma 3.3 
(see also \cite{LeeProperAUE} Theorem 2.5),  
 let $\{ U_t \}_{t \in [0, 1)}$ and
$\{ W_t \}_{t \in (0, 1]}$ be norm continuous paths   
of unitaries in $\Mul(\B \otimes \K)$ such that for all $a \in \A$,
\begin{eqnarray} \label{equ:Paul}
& & U_t \phi(a) U_t^* - \phi_1(a) \in \B \otimes \K \makebox{ }\forall t,  
\makebox{ and  } \| U_t \phi(a) U_t^* - \phi_1(a) \| \rightarrow 0 
\makebox{ as }
t \rightarrow 1-, \makebox{ and }\\  
\label{equ:Kong}
& & 
W_t U_0 \psi(a) U_0^* W_t^* - \phi_1(a) \in \B \otimes \K \makebox{ } \forall t,
\makebox{ and } \| W_t U_0 \psi(a) U_0^* W_t^* - \phi_1(a) \| \rightarrow 0 \makebox{ as } t \rightarrow 0+.  
\end{eqnarray}  

Note that since $\phi(a) - \psi(a) \in \B \otimes \K$ 
for all $a \in \A$, the above implies
that for all $a \in \A$,  
\begin{eqnarray}  \label{equ:Teresa} 
& & W_t U_t \psi(a) U_t^* W_t^* - \phi_1(a),   W_t U_t \phi(a) U_t^* W_t^* 
- W_t \phi_1(a) W_t^* \in \B \otimes \K \makebox{ for all }t, 
\makebox{  and  }\\  
\label{equ:Margaret} 
& & 
\| W_t U_t \psi(a) U_t^* W_t^* - \phi_1(a) \| \rightarrow 0 \makebox{ as }
 t \rightarrow 0+, \makebox{ and }\\  
\label{equ:Vita}
& & \| W_t U_t \phi(a) U_t^* W_t^* 
- W_1 \phi_1(a) W_1^* \| \rightarrow 0 
\makebox{ as } t \rightarrow 1-.         
\end{eqnarray}

For $j = 0,1$, let $V_j \in \Mul(\B) \otimes 1_{\mathbb{B}(l_2)} 
\subseteq \Mul(\B \otimes \K)$ be an isometry such that for all $j,  k$,
\begin{equation} \label{Vita}
V_j^* V_k = \delta_{j,k}, \makebox{ and  } 1_{\Mul(\B \otimes \K)}
\sim 1_{\Mul(\B \otimes \K)} - V_0 V_0^* - V_1 V_1^* \makebox{ in }
\Mul(\B) \otimes 1_{\mathbb{B}(l_2)}.   
\end{equation}
By modifying the paths $\{ U_t \}_{t \in (0,1)}$ and $\{ W_t \}_{t \in (0,1)}$
if necessary, we may assume that
for all $s, t \in [\frac{1}{3}, \frac{2}{3}]$,  $U_s = U_t$ and $W_s = W_t$.

Now, for all $j$,
 since $\phi_1(a) V_j = V_j \phi_1(a)$ for all $a$,  by (\ref{equ:Teresa}),
for all $t$,
$V_j \in \pi_{\B \otimes \K}^{-1}(\pi_{\B \otimes \K} (W_t U_t \psi(\A) U_t^* W_t^*)')$, and    
$W_t V_j W_t^* \in \pi_{\B \otimes \K}^{-1}(\pi_{\B \otimes \K}  
(W_t U_t \phi(\A) U_t^* W_t^*)')$.                            
Note that since $\pi_{\B \otimes \K} \circ \phi = \pi_{\B \otimes \K}
\circ \psi$, $\D_t =_{df} \pi_{\B \otimes \K}^{-1}(
\pi_{\B \otimes \K} (W_t U_t \psi(\A) U_t^* W_t^*)')
= \pi_{\B \otimes \K}^{-1}(
\pi_{\B \otimes \K} (W_t U_t \phi(\A) U_t^* W_t^*)')$ for all $t$.  

Now let $p, q, p', q' \in \D_{\frac{1}{3}}$   
be the projections that are given by $p =_{df} V_0 V_0^*$, $q =_{df} V_1 V_1^*$,
$p' =_{df} W_{\frac{1}{3}} V_0 V_0^* W_{\frac{1}{3}}^*$, and 
$q' =_{df} W_{\frac{1}{3}} V_1 V_1^* W_{\frac{1}{3}}^*$. 
Then in $\D_{\frac{1}{3}}$, $1 \sim p \sim q \sim p' \sim q' \sim
1 - (p + q) \sim 1 - (p' + q')$.  Hence, we can find a unitary  
$Z \in (\D_{\frac{1}{3}})_0$ such that $Zp'Z^* + Z q' Z^* \leq 1 - (p + q)$.
Hence, Let $\{ Z_t \}_{t \in [\frac{1}{2}, \frac{2}{3}]}$ be a norm-continuous path
of unitaries in $\D_{\frac{1}{3}}$ such that 
$Z_{\frac{1}{2}} = Z$ and $Z_{\frac{2}{3}} = 1_{\Mul(\B \otimes \K)}$.    
For $j = 0, 1$, let $\{ \widetilde{S}^{(j)}_t  \}_{t \in (0,1)}$ be the
norm-continuous path of isometries in $\Mul(\B \otimes \K)$        
that is given by 
$$\widetilde{S}^{(j)}_t =_{df}
\begin{cases}
V_j &  t \in (0, \frac{1}{3}]\\
(3 - 6t)^{1/2} V^{(j)} + (6t - 2)^{1/2} Z W_{\frac{1}{3}} V^{(j)} 
W_{\frac{1}{3}}^* & t \in (\frac{1}{3}, \frac{1}{2}] \\    
Z_t  W_{\frac{1}{3}} V^{(j)} 
W_{\frac{1}{3}}^* & t \in (\frac{1}{2}, \frac{2}{3}]\\
W_t V^{(j)}
W_{t}^* & t \in (\frac{2}{3}, 1) 
\end{cases}
$$
Since $\B \subseteq \K_{\Mul}$ and
 $\Mul(\B) \subseteq \Mul$ (unital C*-subalgebra), for $j = 0,1$, 
we may view $\widetilde{S}^{(j)} =_{df} 
\{ \widetilde{S}^{(j)}_t \}_{t \in (0,1)}$ as an isometry in
$\Mul(S \K_{\Mul})$.  Moreover, for $j = 0, 1$, (the constant paths) $S^{(j)}
=_{df} \pi_{\Mul}(\widetilde{S}^{(j)})$  
witness that $\pi(\{ (1-t) \phi + t \psi \}_{t \in (0,1)})$ is 
strongly $O_{\infty}$-stable (as a map into $\C(S \K_{\Mul})$).
\end{proof}

\begin{thm} \label{thm:TotalMainResult} 
Let $\Mul$ be a semifinite factor with separable predual, and let $\A$
be a separable nuclear C*-algebra.  Let $\phi, \psi : \A \rightarrow \Mul$
be essential trivial extensions, with $\phi(a) - \psi(a) \in \K_{\Mul}$
for all $a \in \A$, and such that
either both $\phi$ and $\psi$ are unital or both have large complement.

Then the following statements are equivalent:
\begin{enumerate}
\item $[\phi, \psi]_{CS} = 0$ in $KK(\A, \C(S\K_{\Mul}))$. 
\item $\phi \sim_{pasymp} \psi$. 
\end{enumerate}
\end{thm}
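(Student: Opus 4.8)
The plan is to prove the equivalence of (1) and (2) by combining the pieces already assembled in the paper, with the ``hard'' direction $(1) \Rightarrow (2)$ relying on the strong $O_\infty$-stability established in Lemma \ref{lem:SOinftyStable} together with the reduction Lemma \ref{lem:ExtReduction}. The direction $(2) \Rightarrow (1)$ is precisely Theorem \ref{thm:MainResultEasyDirection}, so nothing remains to be done there.

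For $(1) \Rightarrow (2)$, first I would unwind the hypothesis $[\phi,\psi]_{CS} = 0$ in $KK(\A, \C(S\K_\Mul))$. By Definition \ref{df:KKInvariant}, this says the extension $\Phi =_{df} \pi(\{(1-t)\phi(\cdot) + t\psi(\cdot)\}_{t\in(0,1)}) : \A \to \C(S\K_\Mul)$ is $KK$-trivial, i.e. $[\Phi] = 0$ in $KK(\A, \C(S\K_\Mul))$. By Lemma \ref{lem:SOinftyStable}, $\Phi$ is strongly $O_\infty$-stable. The point of strong $O_\infty$-stability is that for such extensions, $KK$-triviality implies that $\Phi$ is \emph{homotopic} (through a suitable path of extensions, or equivalently that $\Phi$ is trivial up to the appropriate equivalence) to a trivial extension; more precisely, I would invoke a uniqueness/classification result for strongly $O_\infty$-stable extensions (in the spirit of Gabe's work, \cite{GabeMemoirs}) to produce a norm-continuous path $\{u_s\}_{s\in[1,\infty)}$ of unitaries in the corona $\C(S\K_\Mul)/S\K_\Mul$ such that $u_s \Phi(a) u_s^* \to \pi(\{(1-t)\phi(a) + t\psi(a)\}_{t\in(0,1)})$ — wait, more to the point, such that $\Phi$ is asymptotically unitarily equivalent to the trivial ``endpoint'' extension, which concretely should give exactly the hypothesis (3) of Lemma \ref{lem:ExtReduction} with $\theta$ a trivial extension. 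So the strategy is: use $O_\infty$-stability plus $[\Phi]=0$ to get a path of unitaries $\{u_s\}$ in $\Mul(S\K_\Mul)/S\K_\Mul$ with $u_s (\pi\circ\theta)(a) u_s^* \to \pi(\{(1-t)\phi(a)+t\psi(a)\}_{t\in(0,1)})$ for a trivial extension $\theta : \A \to \Mul(S\K_\Mul)$.

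Next I would feed $\phi, \psi, \theta$ into Lemma \ref{lem:ExtReduction} to descend to a separable simple stable $\B \subset \K_\Mul$ satisfying conclusions (a)--(e), in particular a path $\{w_s\}$ of unitaries in $\C(S\B)$ implementing the asymptotic equivalence over $\C(S\B)$. At the level of $\B$ the machinery of Theorem \ref{thm:MultiplierBDFECTh} / \cite{DadarlatEilers}, \cite{LinStableAUE} applies: the trivial extensions $\phi, \psi : \A \to \Mul(\B)$ are absorbing by item (d) of Lemma \ref{lem:ExtReduction} and Lemma \ref{lem:EKAbsorb}, and the existence of the path $\{w_s\}$ of corona unitaries conjugating the ``suspended'' extension means $[\phi,\psi] = 0$ in $KK(\A,\B)$ (this is the standard fact that a homotopy/path in the suspension picture witnesses $KK$-triviality of the difference; cf. the argument in the proof of Theorem \ref{thm:MainResultEasyDirection} run in reverse, or \cite{DadarlatEilers} Lemma 3.3). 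With $[\phi,\psi] = 0$ in $KK(\A,\B)$ in hand, together with absorption of $\phi,\psi$ as maps into $\Mul(\B)$, I apply Lemma \ref{lem:LinDE} to obtain a norm-continuous path $\{u_t\}_{t\in[1,\infty)}$ of unitaries in $\mathbb{C}1 + \K_\Mul$ with $u_1 = 1_\Mul$ and $u_t\phi(a)u_t^* \to \psi(a)$ in norm for all $a \in \A$. Reparametrizing $t \in [1,\infty)$, this is exactly the statement $\phi \sim_{pasymp} \psi$, completing the proof.

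The main obstacle I anticipate is the first step: extracting from ``$[\Phi] = 0$ in $KK$ and $\Phi$ strongly $O_\infty$-stable'' the concrete path of corona unitaries in hypothesis (3) of Lemma \ref{lem:ExtReduction}. This is where the genuine content lies — it is essentially a uniqueness theorem saying that a strongly $O_\infty$-stable extension is determined up to asymptotic unitary equivalence by its $KK$-class, applied to the class $0$. One must be careful that the ambient algebra here is the corona $\C(S\K_\Mul)$, which is unital and (being a corona of a stable-like ideal with simple purely infinite quotient) has enough room, but is not a corona of a $\sigma$-unital algebra, so one cannot blindly cite results stated for $\Mul(\B)/\B$ with $\B$ $\sigma$-unital; instead one reduces via Lemma \ref{lem:ExtReduction} at the appropriate moment and runs the known theory over $\C(S\B)$. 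Everything after that first step is an assembly of the lemmas already proved. I would also note at the end that the appendix KK-computation is what guarantees the passage between $KK(\A,\C(S\K_\Mul))$-triviality and $KK(\A,\B)$-triviality of $[\phi,\psi]$ goes through cleanly.
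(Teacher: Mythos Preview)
Your proposal is correct and follows essentially the same route as the paper: invoke Gabe's uniqueness theorem (\cite{GabeMemoirs} Theorem B) for strongly $O_\infty$-stable maps to get asymptotic unitary equivalence between $\Phi$ and a trivial $\pi\circ\theta$ in $\C(S\K_\Mul)$, reduce via Lemma \ref{lem:ExtReduction} to a separable simple stable $\B$, use the appendix (Proposition \ref{prop:BlackadarProp}) to deduce $[\phi,\psi]=0$ in $KK(\A,\B)$, and finish with Lemma \ref{lem:LinDE}. The paper makes explicit three points you gloss over: the initial reduction to the unital case via $\A^+,\phi^+,\psi^+$; the concrete construction of $\theta$ (factoring through $1_\Mul\otimes\mathbb{B}(l_2)$) and the check that $\pi\circ\theta$ is itself full and strongly $O_\infty$-stable so that Gabe's theorem applies; and the use of \cite{GabeLinNg} Lemma 4.3 to upgrade asymptotic unitary equivalence in $\C(S\B)$ to honest unitary equivalence before invoking Proposition \ref{prop:BlackadarProp}---your ``run Theorem \ref{thm:MainResultEasyDirection} in reverse'' is not a substitute for this last step.
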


\begin{proof}
The direction (2) $\Rightarrow$ (1) was proven in 
Theorem \ref{thm:MainResultEasyDirection}.

Let us now prove the direction (1) $\Rightarrow$ (2):

Firstly, if $\phi$ and $\psi$ both have large complement, 
we can replace $\A$, $\phi$ and $\psi$ with $\A^+$, $\phi^+$
and $\psi^+$ respectively (where $\phi^+(\alpha 1 + a) = 
\alpha 1_{\Mul} + \phi(a)$ for any $a \in \A$).  Moreover,
since $\phi$ and $\psi$ have large complement, 
$\phi^+, \psi^+ : \A \rightarrow \Mul$ will still be essential
trivial extensions.  Hence, we may assume that $\A$, $\phi$ and $\psi$
are unital.

Thus, the extension
$\pi(\{ (1-t) \phi + t \psi \}_{t \in (0,1)}) : \A \rightarrow 
\C(S \K_{\Mul})$
is unital and full.   
Moreover, by Lemma \ref{lem:SOinftyStable}, 
$\pi(\{ (1-t) \phi + t \psi \}_{t \in (0,1)})$ is also strongly $O_{\infty}$-stable.

Let $\theta : \A \rightarrow 1_{\Mul} \otimes \mathbb{B}(l_2) \subset
\Mul \otimes \mathbb{B}(l_2)$
be a (strongly) unital essential trivial extension. 
Since $\Mul \cong \Mul \otimes \mathbb{B}(l_2)$, 
we identify $\Mul$ with $\Mul \otimes \mathbb{B}(l_2)$ and view  
$\theta$ as a map into $\Mul(S \K_{\Mul})$ (i.e., for all $a \in \A$,
we view $\theta(a)$ as a constant function in $C_b((0,1), \Mul)_{stri} 
\cong \Mul(S \K_{\Mul})$).  Hence,  
$\pi \circ \theta : \A \rightarrow \C(S \K_{\Mul})$  
is a unital, full and strongly $O_{\infty}$-stable *-homomorphism.

Since $[\phi, \psi]_{CS} = 0 = [\pi \circ \theta]$
in $KK(\A, \C(S\K_{\Mul}))$, by \cite{GabeMemoirs} Theorem B, 
$\pi(\{(1-t) \phi + t \psi\}_{t \in (0,1)})$ and $\pi \circ \theta$ 
are asymptotically unitarily equivalent (as maps $\A \rightarrow 
\C(S \K_{\Mul})$).
Hence, we can apply Lemma \ref{lem:ExtReduction} to get a separable
simple stable C*-subalgebra $\B \subset \K_{\Mul}$ which satisfies
the conclusions  of Lemma \ref{lem:ExtReduction}.

Since by Lemma \ref{lem:ExtReduction} item (e), 
$\pi \circ \theta$ and $\pi(\{(1-t) \phi + t \psi\}_{t \in (0,1)})$ are
asymptotically unitarily equivalent as maps into $\C(S \B)$, it follows,
by \cite{GabeLinNg} Lemma 4.3, that we can find a unitary   
$u \in \C(S \B)$ such that $u (\pi \circ \theta) u^* = 
\pi(\{(1-t) \phi + t \psi\}_{t \in (0,1)})$. 
Noting that $\pi \circ \theta$ is a trivial extension (as an extension 
of $S\B$), it follows that
$$[\pi(\{(1-t) \phi + t \psi\}_{t \in (0,1)})] = [\pi \circ \theta] = 0
\makebox{  in  } KK(\A, C(S\B)).$$
Hence, by Proposition \ref{prop:BlackadarProp},  
$$[\phi, \psi] = 0 \makebox{ in } KK(\A, \B).$$   
Hence, by the conclusions of Lemma \ref{lem:ExtReduction},
by Lemma \ref{lem:EKAbsorb} and by Lemma \ref{lem:LinDE}, $\phi$ and $\psi$ are properly asymptotically
unitarily equivalent as maps into $\Mul$.
\end{proof}

\section{Appendix: A KK computation}

\label{sect:Appendix} 

Here, our goal is to give a quick sketch of a proof for 
Proposition \ref{prop:BlackadarProp}, which is stated in 
\cite{BlackadarKTh} 19.2.6  
without proof, and for which we could not find a proof in the standard
textbooks. We will be using both the generalized homomorphism
(see \cite{JensenThomsenBook} Chapter 4) and original Kasparov
(see \cite{JensenThomsenBook} Chapter 2) pictures of KK.\\

\begin{lem} \label{lem:HomotopyInExt}
Let $\A$, $\B$ be separable C*-algebras with $\A$ nuclear and $\B$ simple
purely infinite and stable.
Let $(\phi, \psi), (\phi', \psi')$ be two $KK_h(\A, \B)$-cycles
which are homotopic (i.e., $[\phi, \psi] = [\phi', \psi']$ in
$KK(\A, \B)$;  see \cite{JensenThomsenBook} Chapter 4).

Then $[ \pi(\{ (1-t) \phi + t \psi \}_{t \in (0,1)})]
= [\pi(\{ (1 - t) \phi' + t \psi' \}_{t \in (0,1)})]$ in
$Ext(\A, S \B)$.
\end{lem}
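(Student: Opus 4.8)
The plan is to realize the two extensions as the $0$‑ and $1$‑endpoint restrictions of a single extension over $\B\otimes C([0,1])$, obtained by feeding a homotopy of $KK_h$‑cycles into the same recipe, and then to conclude by homotopy invariance of $Ext$ in the coefficient algebra.

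First I would check that the recipe in the statement really produces an extension. Writing $k_a=_{df}\phi(a)-\psi(a)\in\B$, for each $a$ the assignment $t\mapsto(1-t)\phi(a)+t\psi(a)=\phi(a)-tk_a$ is a bounded, strictly continuous function $(0,1)\to\Mul(\B)$, hence an element $F(a)$ of $C_b((0,1),\Mul(\B))_{stri}=\Mul(S\B)$ (here separability of $\B$ is used for this identification). A one‑line computation gives $F(a)F(b)-F(ab)=\{-t(1-t)k_ak_b\}_{t\in(0,1)}\in C_0(0,1)\otimes\B=S\B$, and $F$ is visibly linear and $*$‑preserving, so $\pi\circ F:\A\to\C(S\B)$ is a $*$‑homomorphism, i.e.\ an extension of $\A$ by $S\B$; since $\A$ is nuclear and $S\B$ is $\sigma$‑unital and stable, $Ext(\A,S\B)$ is a group and this has a well‑defined class. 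Applying the same recipe to $(\phi',\psi')$ gives the second class. I would also note here that the recipe is additive for BDF sums (conjugate by an isometry pair of $\Mul(S\B)$, which exists since $S\B$ is stable) and sends a degenerate cycle $(\rho,\rho)$ to the genuinely split extension $\pi\circ(\text{const }\rho)$, which is $0$ in $Ext(\A,S\B)$.

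Next, since $[\phi,\psi]=[\phi',\psi']$ in $KK(\A,\B)=KK_h(\A,\B)$, after possibly adding degenerate cycles to both sides — harmless by the previous remark — there is a $KK_h$‑cycle $(\Phi,\Psi)$ for $(\A,\B\otimes C([0,1]))$, that is, $*$‑homomorphisms $\Phi,\Psi:\A\to\Mul(\B\otimes C([0,1]))$ with $\Phi(a)-\Psi(a)\in\B\otimes C([0,1])$, whose evaluations at $0$ and $1$ recover $(\phi,\psi)$ and $(\phi',\psi')$. Running the construction of the previous paragraph on $(\Phi,\Psi)$, and using $S(\B\otimes C([0,1]))=S\B\otimes C([0,1])$, produces an extension $\pi\circ G:\A\to\C(S\B\otimes C([0,1]))$ of $\A$ by $S\B\otimes C([0,1])$. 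For $t=0,1$ the evaluation $\mathrm{ev}_t:S\B\otimes C([0,1])\to S\B$ is a surjection carrying the canonical countable approximate unit to a countable approximate unit, hence extends to the multiplier algebras and descends to the coronas; tracing through the definitions one gets $(\mathrm{ev}_0)_*[\pi\circ G]=[\pi(\{(1-t)\phi+t\psi\}_{t\in(0,1)})]$ and $(\mathrm{ev}_1)_*[\pi\circ G]=[\pi(\{(1-t)\phi'+t\psi'\}_{t\in(0,1)})]$ in $Ext(\A,S\B)$.

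Finally I would invoke homotopy invariance. Under the natural isomorphism $Ext(\A,-)\cong KK^1(\A,-)$, available because $\A$ is nuclear, the pushforwards $(\mathrm{ev}_0)_*$ and $(\mathrm{ev}_1)_*$ on $Ext(\A,S\B\otimes C([0,1]))$ correspond to the maps induced on $KK^1(\A,-)$ by $\mathrm{ev}_0$ and $\mathrm{ev}_1$; but $\mathrm{ev}_0$ and $\mathrm{ev}_1$ are homotopic $*$‑homomorphisms $S\B\otimes C([0,1])\to S\B$ (through the family $\mathrm{ev}_t$), so they induce the same map on $KK^1(\A,-)$. Hence $(\mathrm{ev}_0)_*[\pi\circ G]=(\mathrm{ev}_1)_*[\pi\circ G]$, which is precisely the asserted equality. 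I expect the only mildly delicate points to be the identification $\Mul(S\B)=C_b((0,1),\Mul(\B))_{stri}$ together with the multiplicativity computation modulo $S\B$, and the fact that the evaluation maps descend to the coronas; the genuine input — homotopy invariance of $Ext$ (equivalently $KK^1$) in the coefficient algebra — is standard. (It is worth remarking that only separability and nuclearity of $\A$ and stability of $\B$ enter this argument; pure infiniteness and simplicity of $\B$ are not needed for this lemma.)
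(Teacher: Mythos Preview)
Your argument is correct and takes a genuinely different route from the paper's proof. The paper does not pass through homotopy invariance of $Ext\cong KK^1$; instead it constructs, by hand, a unitary $W\in\Mul(S\B)$ conjugating the first Busby invariant to the second. Concretely, after stabilizing so that $\phi,\psi,\phi',\psi'$ are all absorbing, the paper uses an asymptotic unitary equivalence result (Lee) to find a path $\{U_t\}$ with $U_t\phi(a)U_t^*-\phi'(a)\in\B$ and $\|U_t\phi(a)U_t^*-\phi'(a)\|\to0$ as $t\to0+$; a short homotopy argument then reduces to showing $[\psi',U_1\psi U_1^*]=0$ in $KK(\A,\B)$, whence the proper asymptotic unitary equivalence theorem of Loreaux--Ng supplies a second path $\{V_t\}$ in $\mathbb{C}1+\B$ with $V_tU_1\psi(a)U_1^*V_t^*\to\psi'(a)$ as $t\to1-$. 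The combined path $W_t=V_tU_t$ then yields a unitary in $\Mul(S\B)$ implementing the unitary equivalence of the two extensions. Thus the paper actually proves the stronger statement that the two Busby invariants are \emph{unitarily equivalent}, not merely equal in $Ext$, but at the cost of invoking the Loreaux--Ng uniqueness theorem, which in turn relies on $K_1$-injectivity of the Paschke dual and hence on $\B$ being simple purely infinite. Your functorial argument is cleaner, uses only standard $KK$ machinery, and---as you correctly observe---needs neither simplicity nor pure infiniteness of $\B$; the trade-off is that you obtain only equality of $Ext$-classes, which is all the lemma claims.
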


\begin{proof}[Sketch of proof.] 
By replacing $\phi, \psi, \phi', \psi'$ with
$\phi \oplus \sigma, \psi \oplus \sigma, \phi' \oplus \sigma,
\psi' \oplus \sigma$ respectively, 
where $\sigma$ is an absorbing trivial extension,
if necessary, we may assume that $\phi$, $\psi$, $\phi'$ and $\psi'$
are all absorbing (and hence has large complement).

By \cite{LeeProperAUE} Theorem 2.5, 
we can find a norm continuous path $\{ U_t \}_{t \in (0, 1]}$, of
unitaries in $\Mul(\B)$, such that
for all $a \in \A$,
i. $U_t \phi(a) U_t^* - \phi'(a) \in  \B$ for all $t$, and
ii. $\lim_{t \rightarrow 0+} \| U_t \phi(a) U_t^* - \phi'(a) \| = 0$.

From this, and from the assumptions that $(\phi, \psi)$ and $(\phi', \psi')$
are $KK_h$-cycles (i.e., generalized homomorphisms),
it follows that for all $s, t \in (0, 1]$,
for all $a \in \A$, the difference between any two of\\  
$\{ \phi'(a), \psi'(a), U_s \phi(a) U_s^*, U_t \phi(a) U_t^*, 
U_s \psi(a) U_s^*, U_t \psi(a) U_t^* \}$ is an element
of $\B$.
Hence, $(U_1 \phi U_1^*, U_1 \psi U_1^*)$ is a $KK_h(\A, \B)$-cycle which
is equivalent to $(\phi, \psi)$;  and
if we define $\phi_t =_{df} \begin{cases} U_t \phi U_t^* & t \in (0,1]\\
\phi' & t = 0 \end{cases}$, then $\{ (\phi_t, U_1 \phi U_1^*) \}_{t \in [0,1]}$
is a homotopy that witnesses that $(U_1 \phi U_1^*, U_1 \psi U_1^*)$
is equivalent to $(\phi', U_1 \psi U_1^*)$.
Hence, $(\phi, \psi)$, and thus $(\phi', \psi')$, is equivalent to
$(\phi', U_1 \psi U_1^*)$.  Hence, $[\psi', U_1 \psi U_1^*] = 0$ in
$KK(\A, \B)$.
Hence, by \cite{LoreauxNg} Theorem 3.4, we can find a norm continuous path
$\{ V_t \}_{t \in [0,1)}$ of unitaries in $\mathbb{C}1 + \B$, 
such that for all $a \in \A$,
$\lim_{t \rightarrow 1-} \| V_t U_1 \psi(a) U_1^* V_t^* - \psi'(a) \| = 0$.
Moreover, since  $\psi$ has large complement and $\B$ is simple purely       
infinite, we can modify $\{ V_t \}_{t \in [0,1)}$ so that $V_0 \sim_h 1$
in $\mathbb{C}1 + \B$; and thus, we can modify $\{V_t \}_{t \in [0,1)}$
so that $V_0 = 1$.   
Hence,
$\{ W_t =_{df} V_t U_t \}_{t \in (0,1)}$ is a norm-continuous path of
unitaries in $\Mul(\B)$ such that $c_t =_{df}
W_t ((1-t)\phi(a) + t \psi(a)) W_t^*
- ((1-t) \phi'(a) + t \psi'(a)) \in \B$ for all $a \in \A$ and $t \in (0,1)$.
Moreover, $c_t \rightarrow 0$ as $t \rightarrow 0+$, as well
as when $t \rightarrow
1-$.
Hence, $W =_{df} \{ W_t \}_{t \in (0,1)}$ is a unitary in $\Mul(S \B)$ such that
when we conjugate
the extension $\pi\left( \{ (1-t) \phi + t \psi \}_{t \in [0,1]}\right)$
by $\pi(W)$, we get
$\pi\left( \{ (1-t) \phi' + t \psi' \}_{t \in [0,1]}\right)$ 
(as extensions of $S \B$ by $\A$). 
\end{proof}

\subsubsection{} \label{subsubsect:BottClass}
Firstly, we fix some notation to be used only in the
next proof, and
 we define the Bott class $\beta_0 \in KK(\mathbb{C}, S
\hat{\otimes} Cl_1)$,
where $S = C_0(\mathbb{R})$, $Cl_1$ is the Clifford algebra for $n =1$,
and $\hat{\otimes}$ is the graded
tensor product of graded C*-algebras. 
Good references for the Bott class are
 \cite{Echterhoff} p18-19 (``dual-Dirac element" construction) and 
\cite{BlackadarKTh} Exercise 19.9.3 -- especially for more details.
Let $\D =_{df} S \hat{\otimes} Cl_1 = S Cl_1$
given the usual grading (e.g., see \cite{Echterhoff} subsection 3.1;
\cite{BlackadarKTh} section 14).
Let $\widetilde{1} : \mathbb{C} \rightarrow \Mul(\D) : \alpha \mapsto
\alpha 1_{\Mul(\D)}$.  Let $f : \mathbb{R} \rightarrow [-1, 1]$ be a
continuous, odd function such that $f(t) > 0$ for all $t > 0$, and
$lim_{t \rightarrow \infty} f(t) = 1$.
Let $e$ be the standard generator of $Cl_1$ (so $e^2 = <e|e>1 = 1$;  in
the representation $Cl_1 = \mathbb{C} \oplus \mathbb{C}$ (with standard
odd grading), $e = (1, -1)$;  see references mentioned above).

The \emph{Bott class} $\beta_0 \in KK(\mathbb{C}, \D)$ is the class given
by the Kasparov $\mathbb{C}$-$\D$ module
\begin{equation} \label{equdf:BottClass}
(\D, \widetilde{1}, f e).
\end{equation}
By \cite{BlackadarKTh} 19.2.5 and Exercise 19.9.3 (see
also \cite{Echterhoff} Lemma 3.25 and the computations after it 
on pages 21-22), $\beta_0$ is an invertible element of $KK(\mathbb{C}, \D)$, and hence,
$\mathbb{C}$ is KK-equivalent to $\D = SCl_1$.

\begin{lem} \label{lem:KKABToExtASB:Compact}
Let $\A$, $\B$ be separable C*-algebras with $\A$ nuclear and $\B$ stable.

Then there exists a group isomorphism
\begin{equation} \label{equdf1:KKABExtASB}
\Lambda:  KK(\A, \B) \rightarrow Ext(\A, S\B)
\end{equation}
such that if $\phi : \A \rightarrow \B$ is a  *-homomorphism
then $\Lambda$ brings the $KK_h(\A, \B)$-class $[\phi, 0]_{KK}$ to
$[\pi(\{ (1-t) \phi  \}_{t \in (0,1)})]_{Ext}$.
\end{lem}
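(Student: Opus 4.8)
The plan is to build $\Lambda$ as a composite of three standard isomorphisms and then to recognize $\pi(\{(1-t)\phi\}_{t\in(0,1)})$ as (the Busby invariant of) the mapping cone of $\phi$, whose $KK$-class is classical. First I would observe that, since $\A$ is separable and nuclear, every extension of $\A$ by the separable stable C*-algebra $S\B$ is semisplit, so $Ext(\A,S\B)$ is a group and Kasparov's theorem gives a natural isomorphism $Ext(\A,S\B)\cong KK^1(\A,S\B)$ (see \cite{BlackadarKTh}, \cite{JensenThomsenBook}). Next, $KK^1(\A,S\B)=KK(\A,S(S\B))=KK(\A,S^2\B)$ by the definition of $KK^1$. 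Finally, by the discussion in \ref{subsubsect:BottClass} the Bott class $\beta_0$ is invertible, hence so is $\beta_\B =_{df} \beta_0\,\hat{\otimes}\,1_\B\in KK(\B,S^2\B)$, and Kasparov product with $\beta_\B$ gives an isomorphism $KK(\A,\B)\xrightarrow{\cong}KK(\A,S^2\B)$ (Bott periodicity). I would then define $\Lambda$ to be the inverse of the resulting composite $KK(\A,\B)\to Ext(\A,S\B)$; being a composite of group isomorphisms, $\Lambda$ is one.

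It then remains to compute $\Lambda$ on the class of a $*$-homomorphism $\phi:\A\to\B$. Here I would first note that $b\mapsto\pi(\{(1-t)b\}_{t\in(0,1)})$ really is an extension of $\B$ by $S\B$: it is multiplicative modulo $S\B$ because $((1-t)^2-(1-t))\,\B=(-t(1-t))\,\B\subseteq C_0(0,1)\otimes\B=S\B$, and composing with $\phi$ gives the same for $\tau_\phi =_{df} \pi(\{(1-t)\phi(\cdot)\}_{t\in(0,1)})$. The key structural point is that the $C^*$-algebra of the extension $\tau_\phi$ is canonically the mapping cone of $\phi$. Indeed, the preimage in $\Mul(S\B)$ of $\tau_\phi(\A)$ consists of the bounded strictly continuous functions $(0,1)\to\Mul(\B)$ of the form $t\mapsto(1-t)\phi(a)+k(t)$ with $a\in\A$ and $k\in S\B$; each such function takes values in $\B$, extends continuously to $[0,1)$ with value $\phi(a)$ at $0$, and vanishes at $t=1$, so the pullback $C^*$-algebra of $\tau_\phi$ is (compatibly with the ideal $S\B$ and the quotient $\A$) isomorphic to the mapping cone $C_\phi=\{(a,f)\in\A\oplus C_0([0,1),\B):f(0)=\phi(a)\}$.

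With this in hand, I would invoke the classical mapping-cone description of the Bott (suspension) isomorphism: under $Ext(\A,S\B)\cong KK^1(\A,S\B)\cong KK(\A,\B)$ the class of the mapping cone extension $C_\phi$ corresponds to $[\phi]\in KK(\A,\B)$, which in the generalized homomorphism picture is exactly $[\phi,0]_{KK}$. (See \cite{BlackadarKTh} 19.4--19.5 and Exercise 19.9.3; for $\phi=\mathrm{id}_\B$ this is the assertion that $C_{\mathrm{id}_\B}$ represents the Bott element $\beta_\B$, and the general case follows by naturality in the first variable, using $[C_\phi]=\phi^*[C_{\mathrm{id}_\B}]$ and $[\phi,0]_{KK}=\phi^*(1_\B)$ together with the naturality of $\Lambda$, of the mapping-cone construction, and of $\beta_\B$.) Therefore $\Lambda([\phi,0]_{KK})=[\tau_\phi]_{Ext}=[\pi(\{(1-t)\phi\}_{t\in(0,1)})]_{Ext}$, as required.

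The one point demanding real care is the normalization in the mapping-cone fact used in the last step: the isomorphism $KK^1(\A,S\B)\cong KK(\A,\B)$ and the Bott class $\beta_0$ are only determined up to sign, and one must check that the orientation built into $\{(1-t)\phi(\cdot)\}$ (collapse at $t=1$, equal to $\phi$ at $t=0$) matches the orientation of $\beta_0$ fixed in \ref{subsubsect:BottClass} via the odd function $f$ with $f>0$ on $(0,\infty)$ and $f(t)\to1$. I expect this sign bookkeeping, rather than any conceptual difficulty, to be the main obstacle; it can be settled by tracing the Kasparov product $1_\B\,\hat{\otimes}\,\beta_\B$ through the cone extension, in the same spirit as the computation of $\beta_0$ recalled after \cite{BlackadarKTh} 19.2.5, and, if a sign were to appear, absorbed by choosing the sign of $\beta_0$ (equivalently, the direction of the path) appropriately.
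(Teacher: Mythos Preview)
Your approach is correct and genuinely different from the paper's. Both build $\Lambda$ as a composite of standard isomorphisms involving the Bott element, but the key computation diverges: the paper works entirely in the Kasparov-module picture, explicitly taking the product of $(\B,\phi,0)$ with the Bott module $(\B\hat{\otimes}\D,\,1_{\B}\hat{\otimes}\widetilde{1},\,1_{\B}\hat{\otimes}fe)$, passing through the opposite grading to handle the sign (this is where the paper's $\Lambda_0:x\mapsto -x$ enters), and then reading off the $kK^1$-cycle $(P,\phi)$ with $P=\tfrac{1-f}{2}$, whose associated Busby map is visibly $\pi(P\phi)=\pi(\{(1-t)\phi\})$. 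You instead recognize at the outset that the pullback C*-algebra of $\tau_\phi$ \emph{is} the mapping cone $C_\phi$, and then reduce via the naturality $[C_\phi]=\phi^*[C_{\mathrm{id}_{\B}}]$, $[\phi,0]=\phi^*(1_{\B})$ to the single base case $\phi=\mathrm{id}_{\B}$, i.e., to the classical fact that the cone extension $0\to S\B\to C\B\to\B\to 0$ represents the Bott element. Your route is more conceptual and shorter, at the cost of importing the mapping-cone/Bott identification as a black box; the paper's route is self-contained and makes the sign bookkeeping completely explicit rather than leaving it as a normalization to be checked (or absorbed) at the end. Both arrive at the same place, and your acknowledgement that the orientation of $\{(1-t)\phi\}$ must be matched to the chosen $\beta_0$ is exactly the point the paper handles by inserting $\Lambda_0$.
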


\begin{proof}[Sketch of proof.]
We freely use the notation from (\ref{subsubsect:BottClass}).
We also use the notation from \cite{BlackadarKTh} -- especially with respect
to graded tensor products
(see \cite{BlackadarKTh} Chapter 14). 
Finally, we give $\A$ and $\B$ the trivial gradings.

To describe the map $\Lambda$, we firstly take the left $\B$-amplification
of $\beta_0$, which is the class $\beta \in KK(\B, \B \hat{\otimes} \D)$
induced by the Kasparov module $(\B \hat{\otimes} \D, 1_{\B}
\hat{\otimes} \widetilde{1},
1_{\B} \hat{\otimes} F_2)$, where $F_2 = f e$.  Since $\beta_0$
is invertible, $\beta$ is also invertible. (See \cite{BlackadarKTh}
17.8.5, and \cite{Echterhoff} P15 after Remark 3.16.)

The group isomorphism
 $\Lambda : KK(\A, \B) \rightarrow Ext(\A, S \B)$ is given by
\begin{equation}
\label{equdf2:KKABExtASB}
\Lambda =_{df}    \Lambda_3 \circ \Lambda_2 \circ \Lambda_1 \circ \Lambda_0.
\end{equation}
Here,
$\Lambda_0 : KK(\A, \B) \rightarrow KK(\A, \B) : x \mapsto -x$.
 $\Lambda_1 : KK(\A, \B) \rightarrow KK(\A, \B \hat{\otimes} \D)$
is the group isomorphism given by $\times \beta$, i.e., Kasparov
product by $\beta$ on the right.   $\Lambda_2 : KK(\A, \B \hat{\otimes} \D)
\rightarrow kK^1(\A, S \B)$ is the inverse of the group isomorphism
in \cite{JensenThomsenBook} 
Proposition 3.3.6 (where $kK^1$ is defined as in 
\cite{JensenThomsenBook} Definition 3.3.4;
note  that  $KK(\A, \B \hat{\otimes} \D)
= KK^1(\A,  S \B)$ by \cite{BlackadarKTh} Df. 17.3.1). And $\Lambda_3 : kK^1(\A, S \B) \rightarrow
Ext(\A, S\B)$ is the inverse of the
group isomorphism from \cite{JensenThomsenBook} Theorem 3.3.10.

Let $\phi : \A \rightarrow \B$ be a *-homomorphism.
We want to apply $\Lambda$ to $\alpha =_{df}
[\phi, 0] \in KK(\A, \B)$.
$\Lambda_0(\alpha) = [0, \phi]$.
Moving from the Cuntz picture to the Kasparov picture,
$\alpha = [\phi, 0]$ corresponds to the Kasparov $\A$-$\B$ module
$(\B, \phi, 0)$.   Hence, by
\cite{BlackadarKTh}  Proposition 18.7.2 and Example 18.4.2(a),
the Kasparov product
$\alpha \times \beta$
is induced by the Kasparov module
$(\B \hat{\otimes} \D, \phi \hat{\otimes} \tilde{1}, 
1_{\B} \hat{\otimes} F_2)$.
But by \cite{JensenThomsenBook} Theorem 2.2.15, $(-\alpha) \times \beta =
-(\alpha \times \beta)$.  Therefore, by the proof of 
\cite{BlackadarKTh} Proposition
17.3.3 (see also \cite{Echterhoff} Theorem 3.9),
$(-\alpha) \times \beta$ is induced by the Kasparov module
$((\B \hat{\otimes} \D)^{op}, \phi \hat{\otimes} \tilde{1}, - 1_{\B} 
\hat{\otimes} F_2)$.
Recalling that $F_2 = f e$, where, in the representation $Cl_1 = \mathbb{C}
\oplus \mathbb{C}$ (with standard odd grading) $e = (1, -1)$,
we see that $(-\alpha) \times \beta$ is induced by the Kasparov module
$\E =_{df} (((\B \otimes S) \oplus (\B \otimes S))^{op}, diag(\phi
\otimes 1, \phi \otimes 1),
diag(-1 \otimes f, 1 \otimes f))$, where the grading on the module
$((\B \otimes S) \oplus (\B \otimes S))^{op}$ is $(x, y) \mapsto (-y, -x)$.
\iffalse
%(Note that the third component of $\E$ is $diag(-1 \otimes f, 1 \otimes f)$
%and not
%$diag(1 \otimes f, - 1 \otimes f)$ (despite the definition of $e$).
%This is because
%$(\B \otimes \D)^{op} = \B^{op} \otimes \D$, and for all $b \in \B^{op}$
%and $d \in \D$,
%since $b$ and $F_2$ are both odd,
%$(1_{\B} \otimes F_2)(b \otimes d) = - b \otimes F_2d$. This accounts for
%an extra negative sign.  See [Blackadar] the paragraph before Example
%14.4.5, and also see Example 18.2.1 (a).)
\fi
To simplify notation, we now write $S\B, \phi, f$ in place
of $\B\otimes S, \phi \otimes 1, 1 \otimes f$, respectively.
Now the map $( S\B  \oplus S\B)^{op} \rightarrow
S\B \oplus S\B : (x, y) \mapsto (x, -y)$
(where the the latter space is given
the standard odd grading) is a graded module isomorphism which induces
an isomorphism of Kasparov modules (see \cite{JensenThomsenBook}
Definition 2.1.7)
between $\E$ and
$\E' =_{df}  ( S \B \oplus S\B, diag(\phi, \phi),
diag(-f, f))$.  Hence, $\Lambda_1 \circ \Lambda_0(\alpha)
= \Lambda_1(-\alpha) = [\E']$.

We abbreviate further by writing $\E' = ((S \B)^2, diag(\phi, \phi),
diag(-f, f))$.
Now let $P \in \Mul(S\B)$ be such that $2P -1 = -f$; and hence,
$P = \frac{1-f}{2}$.
By the properties of $f$,
$\lim_{t \rightarrow -\infty} P(t) = 1$ and $\lim_{t \rightarrow
\infty} P(t) = 0$.  Since $\phi(\A) \subseteq \B$, it is not
hard to check that $(P, \phi)$ satisfies the definition of
a $KK^1$ cycle in \cite{JensenThomsenBook} (3.3.1), (3.3.2) and
(3.3.3) (i.e.,  modulo $S \B$,
$P$ is a projection up to multiplying by $\phi(a)$, for $a \in \A$).
Hence, $\Lambda_2 \circ \Lambda_1 \circ \Lambda_1(\alpha)
=  
 \Lambda_2 ([\E']) = [P, \phi] \in kK^1(\A, S\B)$.

Finally,
by the definition of the group isomorphism
in \cite{JensenThomsenBook} Theorem 3.3.10 (see also \cite{JensenThomsenBook}
Lemma
3.3.8, and the proofs), $\Lambda_3([P,\phi])$ is the class of the
 extension
$\pi(P \phi ) : \A \rightarrow \C(S\B)$.
Since $\lim_{t \rightarrow -\infty} P(t) = 1$ and
$\lim_{t \rightarrow \infty} P(t) = 0$, 
$\Lambda_3([P, \phi]) = [\pi(P \phi)] =
[\pi(\{(1-t) \phi \}_{t \in (0,1)})]$.
Hence, $\Lambda(\alpha) = \Lambda_3([P, \phi]) =
[\pi(\{(1-t) \phi \}_{t \in (0,1)})] \in Ext(\A, S\B)$ as
required.
\end{proof}

\begin{lem} \label{lem:KKABToExtASB:PurelyInfinite}
Let $\A, \B$ be separable C*-algebras with $\A$ nuclear and
$\B$ simple purely infinite and stable.
Let $\Lambda :  KK(\A, \B) \rightarrow Ext(\A, S \B)$ be the group
isomorphism from Lemma \ref{lem:KKABToExtASB:Compact}.
(See (\ref{equdf1:KKABExtASB}) and (\ref{equdf2:KKABExtASB}).)

Then for any $KK_h(\A, \B)$-cycle $(\phi, \psi)$,
$$\Lambda([\phi, \psi]) = [\pi(\{ (1-t) \phi + t \psi \}_{t \in (0,1)})].$$
\end{lem}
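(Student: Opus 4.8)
The plan is to present both $\Lambda$ and the assignment $[\phi,\psi]\mapsto[\pi(\{(1-t)\phi+t\psi\}_{t\in(0,1)})]$ as group homomorphisms $KK(\A,\B)\to Ext(\A,S\B)$ and to check that they agree on a generating set --- the classes of honest *-homomorphisms into $\B$. First I would verify that $(\phi,\psi)\mapsto\pi(\{(1-t)\phi+t\psi\}_{t\in(0,1)})$ really produces an extension of $S\B$ by $\A$: writing $\Phi_t(a)=(1-t)\phi(a)+t\psi(a)$, a short computation gives the multiplicativity defect $\Phi_t(a)\Phi_t(b)-\Phi_t(ab)=t(1-t)\,(\phi(a)-\psi(a))(\psi(b)-\phi(b))$, and since $\phi(a)-\psi(a),\,\phi(b)-\psi(b)\in\B$ this is a function in $C_0((0,1))\otimes\B=S\B$; hence $a\mapsto\pi(\{\Phi_t(a)\})$ is a *-homomorphism $\A\to\C(S\B)$. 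By Lemma \ref{lem:HomotopyInExt}, homotopic $KK_h(\A,\B)$-cycles yield the same class in $Ext(\A,S\B)$, so the assignment descends to a well-defined map $\Theta\colon KK(\A,\B)\to Ext(\A,S\B)$.

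Next I would check that $\Theta$ is a group homomorphism: addition of $KK_h$-cycles is the direct sum $(\phi,\psi)+(\phi',\psi')=(\phi\oplus\phi',\psi\oplus\psi')$, and $\pi(\{(1-t)(\phi\oplus\phi')+t(\psi\oplus\psi')\})$ is the BDF sum of $\pi(\{(1-t)\phi+t\psi\})$ and $\pi(\{(1-t)\phi'+t\psi'\})$, which is the group law on $Ext(\A,S\B)$, while a cycle of the form $(\phi,\phi)$ maps to a trivial extension. The map $\Lambda$ is a group homomorphism by Lemma \ref{lem:KKABToExtASB:Compact}. Moreover $\Theta$ and $\Lambda$ agree on every class $[\phi_0,0]$ with $\phi_0\colon\A\to\B$ a *-homomorphism, since $\Theta([\phi_0,0])=[\pi(\{(1-t)\phi_0+t\cdot 0\})]=[\pi(\{(1-t)\phi_0\})]$, and the latter is $\Lambda([\phi_0,0])$ by Lemma \ref{lem:KKABToExtASB:Compact}.

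To conclude it then suffices that these classes generate $KK(\A,\B)$ as a group --- equivalently, that every element of $KK(\A,\B)$ is a difference $[\phi_0,0]-[\psi_0,0]$ of classes of *-homomorphisms $\A\to\B$. This is where the hypotheses that $\B$ is separable, simple, purely infinite and stable enter, through the existence part of the classification of *-homomorphisms into such algebras: $KK(\A,\B)$ is the Grothendieck group of the homotopy classes of *-homomorphisms $\A\to\B$ under the Cuntz sum (cf.\ \cite{GabeMemoirs}). Granting this, $\Theta=\Lambda$, since both are group homomorphisms agreeing on a generating set, which is the assertion. I expect this last point --- controlling $KK(\A,\B)$ by genuine *-homomorphisms into $\B$ --- to be the main obstacle; everything else is routine bookkeeping with BDF sums and the homotopy invariance already furnished by Lemma \ref{lem:HomotopyInExt}. (One might instead try to run the $\Lambda$-computation of Lemma \ref{lem:KKABToExtASB:Compact} for a general cycle, but that is awkward because the flip operator of the standard $2 \times 2$ picture squares to $1$, so the Kasparov product with $\beta$ is no longer of the simple form used there; a cleaner alternative is to observe that, up to the canonical identifications $Ext(\A,S\B)\cong kK^1(\A,S\B)\cong KK(\A,\B\hat{\otimes}\D)$, $\Lambda$ is right Kasparov multiplication by the fixed invertible $-\beta$, hence natural and product-compatible in the first variable; verifying the same for $\Theta$ via homotopy invariance, additivity and the universal characterization of $KK$ \cite{HigsonKK} reduces $\Theta=\Lambda$ by Yoneda to the single class $1_{KK(\B,\B)}=[\mathrm{id}_\B,0]$, handled by Lemma \ref{lem:KKABToExtASB:Compact}.)
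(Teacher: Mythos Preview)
Your proposal is correct and uses the same three ingredients as the paper's proof: Lemma~\ref{lem:HomotopyInExt} for homotopy invariance of the assignment $(\phi,\psi)\mapsto[\pi(\{(1-t)\phi+t\psi\})]$, Lemma~\ref{lem:KKABToExtASB:Compact} for the computation on classes of the form $[\phi_0,0]$, and Gabe's existence theorem to reduce to that case. The paper's argument is simply a more streamlined version of yours: rather than packaging the right-hand side as a group homomorphism $\Theta$ and comparing with $\Lambda$ on generators, the paper observes (via \cite{GabeMemoirs} Theorem~A) that \emph{every} class in $KK(\A,\B)$ equals $[\phi_0,0]$ for a single *-homomorphism $\phi_0:\A\to\B$, so no additivity check is needed --- one just writes $\Lambda([\phi,\psi])=\Lambda([\phi_0,0])=[\pi(\{(1-t)\phi_0\})]=[\pi(\{(1-t)\phi+t\psi\})]$, the last equality by Lemma~\ref{lem:HomotopyInExt}. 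Your detour through ``differences of classes'' and the verification that $\Theta$ respects BDF sums is harmless but unnecessary once you invoke the full strength of Gabe's result.
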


\begin{proof}
Let $(\phi, \psi)$ be a $KK_h(\A, \B)$-cycle.  By \cite{GabeMemoirs} 
Theorem A,  
since $\B$ is simple purely infinite,
we can find a *-homomorphism $\phi_0 : \A \rightarrow \B$ such that
$[\phi, \psi] = [\phi_0, 0]$ in $KK(\A, \B)$.
By Lemma \ref{lem:KKABToExtASB:Compact}, $\Lambda([\phi_0, 0])
= [\pi(\{ (1-t) \phi  \}_{t \in (0,1)})]$.
By Lemma \ref{lem:HomotopyInExt},
$[\pi(\{ (1-t) \phi  \}_{t \in (0,1)})] = [\pi(\{ (1-t) \phi + t \psi \}_{t
\in (0,1)} )]$. Hence,
$\Lambda([\phi, \psi]) = [\pi(\{ (1-t) \phi + t \psi \}_{t
\in (0,1)} )]$.\\
\end{proof}

\begin{prop} \label{prop:BlackadarProp}
Let $\A$ and $\B$ be separable C*-algebras with $\A$ nuclear and
$\B$ simple and stable.
Let $\Lambda :  KK(\A, \B) \rightarrow Ext(\A, S \B)$ be the group
isomorphism from Lemma \ref{lem:KKABToExtASB:Compact}.
(See (\ref{equdf1:KKABExtASB}) and (\ref{equdf2:KKABExtASB}).)

Then for any $KK_h(\A, \B)$-cycle $(\phi, \psi)$,
$$\Lambda([\phi, \psi]) = [\pi(\{ (1-t) \phi + t \psi \}_{t \in (0,1)})]_{Ext}.$$

As a consequence, we have a group isomorphism
$KK(\A, \B) \rightarrow KK(\A, \C(S\B)) : [\phi, \psi]
\mapsto [\pi(\{ (1-t) \phi + t \psi \}_{t \in (0,1)})]_{KK}$.  
\end{prop}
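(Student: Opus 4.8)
The plan is to deduce the statement from the simple purely infinite case already treated in Lemma \ref{lem:KKABToExtASB:PurelyInfinite}, by tensoring with $\mathcal{O}_\infty$. Put $\B_\infty =_{df} \B \otimes \mathcal{O}_\infty$. Then $\B_\infty$ is separable, simple, purely infinite and stable: simplicity and pure infiniteness because $\B$ is simple and $\mathcal{O}_\infty$ is simple, purely infinite and nuclear, and stability because $\B \cong \B \otimes \K$ forces $\B_\infty \cong \B_\infty \otimes \K$. Let $\iota : \B \to \B_\infty$ be the $*$-homomorphism $\iota(b) =_{df} b \otimes 1_{\mathcal{O}_\infty}$; it is nondegenerate, hence extends to $\bar{\iota} : \Mul(\B) \to \Mul(\B_\infty)$. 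Since the unital inclusion $\mathbb{C} \hookrightarrow \mathcal{O}_\infty$ is a $KK$-equivalence, $[\iota] \in KK(\B, \B_\infty)$ is invertible; consequently $\iota_* : KK(\A, \B) \to KK(\A, \B_\infty)$ is a group isomorphism, and, since $S\iota : S\B \to S\B_\infty$ is then also a $KK$-equivalence and $\A$ is nuclear so that $Ext(\A, S\B) = KK^1(\A, S\B)$ and $Ext(\A, S\B_\infty) = KK^1(\A, S\B_\infty)$, the induced map $(S\iota)_* : Ext(\A, S\B) \to Ext(\A, S\B_\infty)$ is a group isomorphism as well.

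The step requiring the most care is the naturality in $\B$ of the isomorphism $\Lambda$ of Lemma \ref{lem:KKABToExtASB:Compact}. In the notation of the proof of that lemma, $\Lambda = \Lambda_3 \circ \Lambda_2 \circ \Lambda_1 \circ \Lambda_0$, where $\Lambda_0 = -\mathrm{id}$ is natural; $\Lambda_1$ is the Kasparov product with the left-$\B$-amplification $\beta_\B$ of the fixed Bott class $\beta_0$, and is natural by functoriality of amplification and of the Kasparov product (for a nondegenerate $*$-homomorphism $f : \B \to \B'$ one has $f_*(x) \times \beta_{\B'} = (f \hat{\otimes} \mathrm{id})_*(x \times \beta_\B)$ for $x \in KK(\A, \B)$); and $\Lambda_2, \Lambda_3$ are the canonical isomorphisms of \cite{JensenThomsenBook} (Proposition 3.3.6 and Theorem 3.3.10), which are natural by construction. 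Writing $\Lambda^{\B}$ for $\Lambda$ with coefficient algebra $\B$, this gives $\Lambda^{\B_\infty} \circ \iota_* = (S\iota)_* \circ \Lambda^{\B}$.

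Now let $(\phi, \psi)$ be a $KK_h(\A, \B)$-cycle. Then $(\bar{\iota} \circ \phi, \bar{\iota} \circ \psi)$ is a $KK_h(\A, \B_\infty)$-cycle with $[\bar{\iota} \circ \phi, \bar{\iota} \circ \psi] = \iota_*[\phi, \psi]$ in $KK(\A, \B_\infty)$; and since the extension of $S\iota$ to multiplier algebras acts pointwise through $\bar{\iota}$, we have $\pi(\{ (1-t)\,\bar{\iota}\circ\phi + t\,\bar{\iota}\circ\psi \}_{t \in (0,1)}) = \dot{\iota} \circ \pi(\{ (1-t)\phi + t\psi \}_{t \in (0,1)})$, where $\dot{\iota} : \C(S\B) \to \C(S\B_\infty)$ is induced by $\iota$, so that $[\pi(\{ (1-t)\,\bar{\iota}\circ\phi + t\,\bar{\iota}\circ\psi \})]_{Ext} = (S\iota)_*\bigl([\pi(\{ (1-t)\phi + t\psi \})]_{Ext}\bigr)$. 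Combining Lemma \ref{lem:KKABToExtASB:PurelyInfinite} applied to $\B_\infty$ with the naturality of $\Lambda$ we obtain
\[
(S\iota)_*\bigl(\Lambda^{\B}([\phi,\psi])\bigr) = \Lambda^{\B_\infty}(\iota_*[\phi,\psi]) = \Lambda^{\B_\infty}([\bar{\iota}\circ\phi, \bar{\iota}\circ\psi]) = [\pi(\{(1-t)\,\bar{\iota}\circ\phi + t\,\bar{\iota}\circ\psi\})]_{Ext} = (S\iota)_*\bigl([\pi(\{(1-t)\phi + t\psi\})]_{Ext}\bigr).
\]
Since $(S\iota)_*$ is injective, $\Lambda^{\B}([\phi, \psi]) = [\pi(\{ (1-t)\phi + t\psi \}_{t \in (0,1)})]_{Ext}$, which is the first assertion.

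For the consequence, I would invoke the standard identification $Ext(\A, S\B) \cong KK(\A, \C(S\B))$, $[\tau]_{Ext} \mapsto [\tau]_{KK}$: this is the inverse of the boundary map of the six-term exact sequence associated to $0 \to S\B \to \Mul(S\B) \to \C(S\B) \to 0$, which is an isomorphism because $KK^*(\A, \Mul(S\B)) = 0$ (the Eilenberg swindle, $S\B$ being stable and $\sigma$-unital; see \cite{BlackadarKTh}) and because $Ext(\A, S\B) = KK^1(\A, S\B)$ for $\A$ nuclear. Composing this isomorphism with the isomorphism $\Lambda^{\B} : KK(\A, \B) \to Ext(\A, S\B)$ yields a group isomorphism $KK(\A, \B) \to KK(\A, \C(S\B))$, which by the first assertion sends $[\phi, \psi]$ to $[\pi(\{ (1-t)\phi + t\psi \}_{t \in (0,1)})]_{KK}$, as required.
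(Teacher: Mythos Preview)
Your proposal is correct and follows essentially the same route as the paper: both reduce to the purely infinite case (Lemma \ref{lem:KKABToExtASB:PurelyInfinite}) by tensoring with $\mathcal{O}_\infty$, invoke the naturality of $\Lambda$ in $\B$ to obtain a commuting square, and then use injectivity of the induced map on $Ext$ to pull the identity back to $\B$. The only cosmetic difference is in the final sentence: the paper obtains the isomorphism $Ext(\A, S\B) \cong KK(\A, \C(S\B))$ by citing \cite{DadarlatAUEAndExtTopology} Proposition 4.2, whereas you derive it from the six-term sequence for $0 \to S\B \to \Mul(S\B) \to \C(S\B) \to 0$ together with the Eilenberg swindle; these are equivalent.
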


\begin{proof}[Sketch of proof.] 
By \cite{BlackadarKTh} Proposition 23.10.1, the KK-class of the inclusion map
$\iota : \mathbb{C} \rightarrow O_{\infty}$ witnesses that $\mathbb{C}$
and $O_{\infty}$ are KK-equivalent.
Hence, by \cite{BlackadarKTh} Example 19.1.2 (c), the KK-class
of the
*-homomorphism
$\widetilde{\iota}: id_{\B} \otimes  \iota  : \B \otimes \mathbb{C}
\rightarrow \B \otimes O_{\infty}$ witnesses that $\B$ and
$\B \otimes O_{\infty}$ are KK-equivalent.

Also, since $\widetilde{\iota}$ maps any approximate unit of $\B$ to
an approximate unit of $\B \otimes O_{\infty}$,
$\widetilde{\iota}$ induces *-homomorphisms $S \B \rightarrow S \B \otimes 
O_{\infty}$,  $\Mul(S\B) \rightarrow
\Mul(S \B \otimes O_{\infty})$ and
$\C(S \B) \rightarrow \C(S \B \otimes O_{\infty})$, all of which we also
denote by ``$\widetilde{\iota}$". This in turn induces
a group homomorphism
$\widetilde{\iota}_* : Ext(\A, S \B) \rightarrow
Ext(\A, S \B \otimes O_{\infty})$.
Now 
the map $\Lambda$ (see (\ref{equdf2:KKABExtASB})) 
is natural in the variable $\B$. 
Hence, we have a commuting diagram
\begin{equation} \label{equ:CommutingIsomorphismsKKABExtASB}
\begin{array}{ccc}
KK(\A, \B) & \stackrel{\Lambda}{\rightarrow} & Ext(\A, S \B) \\
\times [\widetilde{\iota}] \downarrow  &     &  \widetilde{\iota}_*
\downarrow\\
KK(\A, \B \otimes O_{\infty}) &
\stackrel{\Lambda_{O_{\infty}}}{\rightarrow} &
Ext(\A, S \B \otimes O_{\infty})
\end{array}
\end{equation}
In the above commuting diagram, all the arrows, except possibly
for $\widetilde{\iota}_*$, are group isomorphisms.
Hence, $\widetilde{\iota}_*$ is a group isomorphism.

Let $(\phi, \psi)$ be a $KK_h(\A, \B)$-cycle.
Then $\times [\widetilde{\iota}]$ brings $[\phi, \psi]$ to
$[\widetilde{\iota} \circ \phi, \widetilde{\iota} \circ \psi]
\in KK(\A, \B \otimes O_{\infty})$.  Since $\B$ is simple purely
infinite, by Lemma \ref{lem:KKABToExtASB:PurelyInfinite},
$\Lambda_{O_{\infty}}([\widetilde{\iota} \circ \phi,
\widetilde{\iota} \circ \psi]) = [\pi(\{ (1-t) \widetilde{\iota}
\circ \phi  + t \widetilde{\iota} \circ \psi \}_{t \in (0,1)})]$.
But $\widetilde{\iota}_* ([ \pi(\{ (1-t)
 \phi  + t \psi \}_{t \in (0,1)})]) =
[\pi(\{ (1-t) \widetilde{\iota}
\circ \phi  + t \widetilde{\iota} \circ \psi \}_{t \in (0,1)})]$.
Hence, since (\ref{equ:CommutingIsomorphismsKKABExtASB}) is
a commuting diagram where all the arrows are isomorphisms,
$\Lambda_{O_{\infty}}([\phi, \psi])
= [ \pi(\{ (1-t)
 \phi  + t \psi \}_{t \in (0,1)})]$, as required.

The last statement follows from the previous statement, and by composing
$\Lambda$ with the group isomorphism in 
\cite{DadarlatAUEAndExtTopology} Proposition 4.2. 
\end{proof}


\begin{thebibliography}{00}


\bibitem{ArvesonDuke}  W. Arveson, Notes on extensions of C*-algebras,
Duke Mathematical Journal, 44 (1977), no. 2, 329-355.    


\bibitem{PhillipsEtAl} M. Benameur, A. Carey, J. Phillips,
A. Rennie, F. Sukochev, and K. Wojciechowski,
An analytic approach to spectral flow in von Neumann algebras,
Analysis, geometry and topology of elliptic operators,
World Sci. Publ., New Jersey, 2006.  


\bibitem{BlackadarKTh}  B. Blackadar,
K-theory for operator algebras.
Second edition.  Mathematical Sciences Research Institute Publications, 5.
Cambridge University Press, Cambridge, 1998.


\bibitem{BlackadarOABook} B. Blackadar,
Operator algebras:  theory of C*-algebras and von Neumann algebras, 
Encyclopedia of Mathematical Sciences, 122, Springer--Verlag,
Berlin Heidelberg New York, 2006.  

\bibitem{BlanchardRohdeRordam} E. Blanchard, R. Rohde and M. Rordam,
Properly infinite $C(X)$-algebras and $K_1$-injectivity,
J. Noncommut.  Geom., 2 (2008), no. 3, 263-282.   



\bibitem{BDF1} L. Brown, R. Douglas and P. Fillmore,  
Unitary equivalence modulo the compact operators and extensions
of C*-algebras, Proceedings of a Conference on Operator Theory in
Dalhousie University, Halifax, Nova Scotia, Springer,
Berlin (1973).   

\bibitem{ChoiEffros} 
M. D. Choi and E. G. Effros,
The completely positive lifting problem for C*-algebras,
Ann. Math. 104 (1976), 585-609.  


\bibitem{CiupercaGiordanoNgNiu} A. Ciuperca, T. Giordano, P. W. Ng and
Z. Niu, Amenability and uniqueness, Adv. Math. 240 (2013), 325-345.   


\bibitem{Cuntz} J. Cuntz, K-theory for certain C*-algebras,
Ann. of Math. 113 (1981), 181-197.  


\bibitem{DadarlatAUEAndExtTopology} M. Dadarlat,
Approximate unitary equivalence and the topology of $Ext(\A, \B)$,
C*-algebras (Munster, 1999), 42-60, Springer, Berlin, 2000.  


\bibitem{DadarlatEilers} M. Dadarlat and S. Eilers, 
Asymptotic unitary equivalence in KK-theory, K-Theory, 23
(2001), no. 4, 305-322.  

\bibitem{DavidsonBook}  K. Davidson, C*-algebras by Example, 
The Fields Institute Monographs, 6. American
Mathematical Society, Providence, RI, 1996.



\bibitem{Echterhoff} S. Echterhoff,
Bivariant KK-theory and the Baum--Connes conjecture,
Chapter 3 in the book:  K-theory for group C*-algebras 
and semigroup C*-algebras (Oberwolfach Seminars, 47), 
Birkhauser, Switzerland, 2017.  A copy is available at
arxiv.org/abs/1703.10912.  

\bibitem{ElliottKucerovsky} G. A. Elliott and Dan Kucerovsky, 
G. A. Elliott and D. Kucerovsky, An abstract 
Brown--Douglas--Fillmore absorption theorem, Pacific Journal of
Mathematics 3 (2001), 1-25.   




\bibitem{GabeEK} J. Gabe, A note on nonunital absorbing extensions,
Pacific Journal of Mathematics, 284 (2016), no. 2, 383-393.  


\bibitem{GabeMemoirs} J. Gabe, 
Classification of $O_{\infty}$-stable C*-algebras, Mem. Amer.
Math. Soc. 293 (2024), no. 1461. 


\bibitem{GabeLinNg} J. Gabe, H. Lin and P. W. Ng, 
Extensions of C*-algebras.  Preprint.  A copy is available at\\ 
arxiv.org/abs/2307.15558.


\bibitem{GiordanoKaftalNg} T. Giordano, V. Kaftal, and P. W. Ng,
Aspects of extension theory. Preprint. 


\bibitem{GiordanoNg} T. Giordano and P. W. Ng,
Some consequences of von Neumann algebra uniqueness,
J. Funct. Anal. 264 (2013), no. 4, 1112-1124.   


\bibitem{HadwinMaShen}
D. Hadwin, M. Ma and J. Shen,
Voiculescu's theorem in properly infinite factors,
J. Funct. Anal. 290 (2026), no. 1, Paper No. 111198.   



\bibitem{HigsonKK} N. Higson, A characterization of KK-theory, 
Pacific Journal of Mathematics, 126 (1987), no. 2, 253-276.  



\bibitem{HjelmborgRordam} J. Hjelmborg and M. Rordam,
On stability of C*-algebras, J. Funct. Anal. 155 (1998), no. 1,
153-170.


\bibitem{JensenThomsenBook}  K. K. Jensen and K. Thomsen,
Elements of KK-theory. Mathematics:  Theory and Applications.
Birkhauser Boston, Inc., Boston, MA, 1991.  viii+202 pp.


\bibitem{KadisonPyth1} R. V. Kadison, The Pythagorean 
Theorem I: the finite case, Proc. Natl. Acad. Sci. USA,
99 (2002), no. 7, 4178-4184.  

\bibitem{KadisonPyth2} R. V. Kadison, The Pythagorean
Theorem II:  the infinite discrete case, Proc. 
Natl. Acad. Sci. USA, 99 (2002), no. 8, 5217-5222.  


\bibitem{KaftalLoreaux} V. Kaftal and J. Loreaux,
Kadison's Pythagorean theorem and essential codimension,
Integ. Equ. Oper. Theory, 87 (2017), no. 4, 565-580.  


\bibitem{KucerovskyFredholmTriples}
D. Kucerovsky, When are Fredholm triples operator homotopic?
Proceedings of the AMS, 135 (2007), no. 2, 405-415.   


\bibitem{LeeProperAUE}  H. Lee, Proper asymptotic unitary equivalence
in KK-theory and projection lifting from the corona algebra,
J. Funct. Anal. 260 (2011), no. 1, 135-145.  

\bibitem{LiShenShi} Q. Li, J. Shen and R. Shi,
A generalization of Voiculescu's theorem for normal operators to semifinite
von Neumann algebras,
Adv. Math. 375 (2020), 107347, 55pp.


\bibitem{LinBook} H. Lin,
	An introduction to the classification of amenable C*-algebras. World Scientific Publishing Company, Inc., River Edge, NJ, 2001.
	



\bibitem{LinStableAUE} H. Lin, Stable approximate unitary equivalence of homomorphisms,
Journal of Operator Theory, 47 (2002), no. 2, 343-378.     


\bibitem{LinSimpleCorona} H. Lin, 
Simple Corona C*-algebras,
Proceedings of the American Mathematical Society,
132 (2004), no. 11, 3215-3224.   


\bibitem{LoreauxArveson}  J. Loreaux, Restricted diagonalization
of finite spectrum normal operators and a theorem of Arveson,
Journal of Operator Theory, 81 (2019), no. 2, 257-272.  


\bibitem{LoreauxNg} J. Loreaux and P. W. Ng,
Remarks on essential codimension,
Integral Equations and Operator Theory,
92 (2020), Paper No. 4, 33 pp.  


\bibitem{LoreauxNgSutradhar1} J. Loreaux, P. W. Ng and
A. Sutradhar, Essential codimension and lifting projections,
Houston Journal of Mathematics, 49
(2023), no. 4, 741-784.    


\bibitem{LoreauxNgSutradhar2} J. Loreaux, P. W. Ng and
A. Sutradhar, $K_1$-injectivity of the Paschke dual algebra
for certain simple C*-algebras. 
J. Math. Anal. Appl. 554 (2026), no. 2, Paper No. 129947, 56pp. 


\bibitem{OlsenPedersen}  C. L. Olsen and G. K. Pedersen,
Corona C*-algebras and their applications to lifting problems,
Math. Scand. 64 (1989), 63-86.  


\bibitem{RordamStable}  M. Rordam, Stable C*-algebras,
Advanced Studies in Pure Mathematics 38, Operator algebras and applications,
Math. Soc. of Japan, Tokyo, 117-199 (2004).   

\bibitem{ThomsenPaschke} K. Thomsen, On absorbing extensions, Proceedings of the American
Mathematical Society, 129 (2001), no. 5, 1409-1419.  


\bibitem{Voiculescu} D. V. Voiculescu, 
A noncommutative Weyl--von Neumann theorem, 
Rev. Roum. Pures Appl., 21 (1976), 97-113.  


\bibitem{WeggeOlsen}  N. E. Wegge--Olsen,
	K-theory and C*-algebras. A friendly approach.
	Oxford Science Publications.  The Clarendon Press, Oxford University
	Press, New York, 1993.



\end{thebibliography}
\end{document}